\newcommand{\footremember}[2]{%
    \footnote{#2}
    \newcounter{#1}
    \setcounter{#1}{\value{footnote}}%
}
\newcommand{\footrecall}[1]{%
    \footnotemark[\value{#1}]%
}
\title{Stochastic optimization with momentum:
convergence, fluctuations, and traps avoidance}
\author{%
Anas Barakat \footremember{1}{LTCI, T\'el\'ecom Paris, IP Paris, France (firstname.name@telecom-paris.fr)}
\and Pascal Bianchi \footrecall{1}
\and Walid Hachem \footremember{2}{LIGM, CNRS, Univ Gustave Eiffel, ESIEE Paris, F-77454 Marne-la-Vallée, France (firstname.name@univ-eiffel.fr)}
\and Sholom Schechtman \footrecall{2}
}
\date{%
    \today
    }
\begin{document}

\maketitle

\begin{abstract}
In this paper, a general stochastic optimization procedure is studied, unifying
several variants of the stochastic gradient descent such as, among others, the
stochastic heavy ball method, the Stochastic Nesterov Accelerated Gradient
algorithm (S-NAG), and the widely used \textsc{Adam} algorithm. The algorithm
is seen as a noisy Euler discretization of a non-autonomous ordinary
differential equation, recently introduced by Belotto da Silva and Gazeau,
which is analyzed in depth.  Assuming that the objective function is non-convex
and differentiable, the stability and the almost sure convergence of the
iterates to the set of critical points are established.  A noteworthy special
case is the convergence proof of S-NAG in a non-convex setting.  Under some
assumptions, the convergence rate is provided under the form of a Central Limit
Theorem.  Finally, the non-convergence of the algorithm to undesired critical
points, such as local maxima or saddle points, is established.  Here, the main
ingredient is a new avoidance of traps result for non-autonomous settings,
which is of independent interest.
\end{abstract}

\tableofcontents

\section{Introduction}

Given a probability space $\Xi$, an integer $d > 0$, and a function $f : \RR^d
\times \Xi \to \RR$, consider the problem of finding a local minimum of the
function $F(x)\eqdef\EE_\xi[f(x,\xi)]$ w.r.t.~$x\in \RR^d$, where $\EE_\xi$
represents the expectation w.r.t.~the random variable $\xi$ on $\Xi$.
The paper focuses on the case where $F$ is possibly non-convex.
It is assumed that the function $F$ is unknown to the observer, either because the distribution of $\xi$
is unknown, or because the expectaction cannot be evaluated.
Instead, a sequence $(\xi_n:n\geq 1)$ of
i.i.d.~copies of the random variable $\xi$ is revealed online.

While the Stochastic Gradient Descent is the most classical algorithm that is
used to solve such a problem, recently, several other algorithms became very
popular.  These include the Stochastic Heavy Ball (\textsc{SHB}), the
stochastic version of Nesterov's Accelerated Gradient method (\textsc{S-NAG})
and the large class of the so-called \textit{adaptive} gradient algorithms,
among which \textsc{Adam} \cite{kingma2014adam} is perhaps the most used in
practice. As opposed to the vanilla Stochastic Gradient Descent, the study of
such algorithms is more elaborate, for three reasons. First, the update of the
iterates involves a so-called \emph{momentum} term, or inertia, which has the
effect of ``smoothing'' the increment between two consecutive iterates.
Second, the update equation at the time index $n$ is likely to depend on $n$,
making these systems inherently \emph{non-autonomous}.  Third, as far as
adaptive algorithms are concerned, the update also depends on some additional
variable (\emph{a.k.a.} the learning rate) computed online as a function of
the history of the computed gradients.

In this work, we
study in a unified way the asymptotic behavior of these algorithms in the
situation where $F$ is a differentiable function which
is not necessarily convex, and where the stepsize of the algorithm is decreasing.

Our starting point is a generic non-autonomous Ordinary Differential Equation
(ODE) introduced by Belotto da Silva and Gazeau \cite{das-gaz-20} (see also \cite{barakat-bianchi21} for \textsc{Adam}),
depicting the continuous-time versions of the aforementioned florilegium of algorithms.
The solutions to the ODE are shown to converge to the set of critical points of $F$.
This suggests that a general provably convergent algorithm can be obtained by
means of an Euler discretization of the ODE, including possible stochastic perturbations.
Special cases of our general algorithm include \textsc{SHB}, \textsc{Adam} and \textsc{S-NAG}.
We establish the almost sure boundedness and the convergence to critical points.
Under additional assumptions, we obtain convergence
rates, under the form of a central limit theorem.
These results are new. They extend the works of~\cite{gad-pan-saa18,barakat-bianchi21} to a general setting.
In particular, we highlight the almost sure convergence result of \textsc{S-NAG} in a non-convex setting,
which is new to the best of our knowledge.

Next, we address the question of the avoidance of ``traps''.
In a non-convex setting, the set of critical points of a function $F$
is generally larger than the set of local minimizers. A ``trap'' stands for a
critical point at which the Hessian matrix of
$F$ has negative eigenvalues, namely, it is a local maximum or saddle
point. We establish that the iterates cannot converge to such a point,
if the noise is exciting in some directions.
The result extends previous works of~\cite{gad-pan-saa18}
obtained in the context of \textsc{SHB}. This result not only allows to study
a broader class of algorithms but also significantly weakens the assumptions.
In particular, \cite{gad-pan-saa18} uses a sub-Gaussian assumption on the noise
and a rather stringent assumption on the stepsizes.
The main difficulty in the approach of~\cite{gad-pan-saa18} lies in the use of the
classical autonomous version of Poincar\'e's invariant manifold theorem.
The key ingredient of our proof is a general avoidance of traps result, adapted
to non-autonomous settings, which we believe to be of independent interest.
It extends usual avoidance of traps results to a non-autonomous setting, by making use of
a non-autonomous version of Poincar\'e's theorem \cite{dal-krei-(livre)74, klo-ras-(livre)11}. \\

\noindent\textbf{Paper organization.}
In Section~\ref{sec:odes}, we introduce and study the ODE's governing our general stochastic algorithm.
We establish the existence and uniqueness of the solutions, as well as the convergence to the set of critical points.
In Section~\ref{sec:as_convergence}, we introduce the main algorithm. We provide sufficient conditions under which
the iterates are bounded and converge to the set of critical points. A central limit theorem is stated.
Section~\ref{sec:avt} introduces a general avoidance of traps result for non-autonomous settings.
Next, this result is applied to the proposed algorithm. Sections~\ref{sec:proofs_odes}, \ref{sec:proof_as_convergence} and \ref{sec:proofs_avt} are devoted to the
proofs of the results of Sections~\ref{sec:odes}, \ref{sec:as_convergence} and \ref{sec:avt}, respectively.\\

\noindent\textbf{Notations.} Given an integer $d\geq 1$, two vectors $x, y \in
\RR^d$, and a real $\alpha$, we denote by $x \odot y$, $x^{\odot \alpha}$,
$x/y$, $|x|$, and $\sqrt{|x|}$ the vectors in $\RR^d$ whose $i$-th coordinates
are respectively given by $x_iy_i$, $x_i^{\alpha}$, $x_i/y_i$, $|x_i|$,
$\sqrt{|x_i|}$.  Inequalities of the form $x\leq~y$ are to be read
componentwise.  The standard Euclidean norm is denoted $\|\cdot\|$.  Notation
$M^\T$ represents the transpose of a matrix $M$.  For $x \in \RR^d$ and $\rho >
0$, the notation $B(x,\rho)$ stands for the open ball of $\RR^d$ with center
$x$ and radius~$\rho$.  We also write $\RR_+ = [0,\infty)$.  If  $z\in \RR^d$
and $A \subset \RR^d$, we write $\dist(z,A) \eqdef \inf\{ \|z-z'\| :z'\in A\}$.
By $\1_A(x)$, we refer to the function that is equal to one if $x\in A$ and to
zero elsewhere.  The set of zeros of a function $h : \RR^d \to \RR^{d'}$ is
$\zer h = \{ x \, : \, h(x) = 0 \}$. Let $D$ be a domain in $\RR^d$.  Given an
integer $k\geq 0$, the class $\mC^k(D, \RR)$ is the class of $D\to\RR$ maps
such that all their partial derivatives up to the order $k$ exist and are
continuous.  For a function $h \in \mC^k(D, \RR)$ and for every $i \in
\{1,\ldots,d\}$, we denote as $\partial_i^k h(x_1, \ldots, x_d)$ the
$k^{\text{th}}$ partial derivative of the function $h$ with respect to $x_i$.
When $k=1$, we just write $\partial_i h(x_1,\ldots,x_d)$.  The gradient of a
function $F: \RR^d \to \RR$ at a point $x \in \RR^d$ is denoted as $\nabla
F(x)$, and its Hessian matrix at $x$ is $\nabla^2 F(x)$ as usual.  For a
function $S: \RR^d \to \RR^d$, the notation $\nabla S(x)$ stands for the
jacobian matrix of $S$ at point $x$.


\section{Ordinary Differential Equations}
\label{sec:odes}

\subsection{A general ODE}
\label{subsec-ode-adam}

Our starting point will be a non-autonomous ODE which is almost identical to the one introduced in~\cite{das-gaz-20} and close to the one in~\cite{barakat-bianchi21}.
Let~$F$ be a
function in $\mC^1(\RR^d, \RR)$, let $S$ be a continuous $\RR^d \to \RR_+^d$
function, let $\sh,\sr,\spp, \sq  : (0,\infty) \to \RR_+$ be four continuous
functions, and let $\varepsilon > 0$.  Let $v_0 \in \RR_+^d$ and $x_0, m_0 \in
\RR^d$.  Starting at $\sv(0) = v_0$, $\sm(0) = m_0$, and $\sx(0) = x_0$, our
ODE on $\RR_+$ with trajectories in $\cZ_{+}\eqdef \RR_+^d \times
\bR^d\times\bR^d$ reads
\begin{equation}
\begin{cases}
  \dot \sv(t) &= \spp(t) S(\sx(t)) - \sq(t) \sv(t)  \\
  \dot \sm(t) &= \sh(t) \nabla F(\sx(t)) - \sr(t) \sm(t)   \\
  \dot \sx(t) &= - {\sm(t)} / {\sqrt{\sv(t) + \varepsilon}}
\end{cases}
\tag{ODE-$1$}\label{ode-generale}
\end{equation}
This ODE can be rewritten compactly in the following form.  Write $z_0 =
(v_0, m_0, x_0)$, and let $\sz(t) = ( \sv(t), \sm(t),
\sx(t)) \in \cZ_+$ for $t\in\RR_+$. Let $\cZ \eqdef \RR^d \times
\bR^d\times\bR^d$, and define the map $g : \cZ_+ \times (0,\infty)  \to \cZ$ as
\begin{equation}
\label{eq:gode}
g(z, t) = \begin{bmatrix}
   \spp(t) S(x) - \sq(t) v \\
   \sh(t) \nabla F(x) - \sr(t) m \\
   - {m} / {\sqrt{v+\varepsilon}} \end{bmatrix}
\end{equation}
for $z = (v, m, x) \in \cZ_+$. With these notations, we can
rewrite~\eqref{ode-generale} as
\[
\sz(0) = z_0, \quad \dot \sz(t) = g(\sz(t), t) \ \text{for} \ t > 0 .
\]
By setting $S(x) = \nabla F(x)^{\odot 2}$ when necessary and by properly
choosing the functions $\sh$, $\sr$, $\spp$, and $\sq$, a large number of
iterative algorithms used in Machine Learning can be obtained by an Euler's
discretization of this ODE. For instance, choosing $\sh(t) = \sr(t) =
a(t,\lambda, \alpha_1)$ and $\spp(t) = \sq(t) = a(t,\lambda, \alpha_2)$ with
$a(t,\lambda,\alpha) = \lambda^{-1}(1 - \exp(-\lambda\alpha)) / (1 -
\exp(-\alpha t))$ and $\lambda,\alpha_1,\alpha_2 > 0$, one obtains a version of
the \textsc{Adam} algorithm \cite{kingma2014adam} (see \cite[Sections~2.4-4.2]{das-gaz-20} for details).
 To give another less
specific example, if we set $\spp = \sq \equiv 0$, then the resulting ODE
covers a family of algorithms to which the well-known \textsc{Heavy Ball} with friction algorithm~\cite{attouch2000heavy} belongs.
For a comprehensive and
more precise view of the deterministic algorithms that can be deduced
from~\eqref{ode-generale} by an Euler's discretization, the reader is referred
to \cite[Table~1]{das-gaz-20}.

In this paper, since we are rather interested in stochastic versions of these
algorithms, Eq.~\eqref{ode-generale} will be the basic building block of
the classical ``ODE method'' which is widely used in the field of stochastic
approximation~\cite{ben-(cours)99}. In order to analyze the behavior of this
equation in preparation of the stochastic analysis, we need the following
assumptions.

\begin{assumption}
\label{hyp:F_loclip}
The function $F$ belongs to $\mC^1(\RR^d, \RR)$ and $\nabla F$ is
locally Lipschitz continuous.
\end{assumption}

\begin{assumption}
\label{hyp:F_coerc}
  $F$ is coercive, \emph{i.e.}, $F(x) \to +\infty$ as $\| x \| \to +\infty$.
\end{assumption}
Note that this assumption implies that the infimum $F_\star$ of $F$ is finite,
and the set $\zer\nabla F$ of zeros of $\nabla F$ is nonempty.

\begin{assumption}
\label{hyp:S_loclip}
The map $S:\bR^d\to  \RR_+^d$ is locally Lipschitz continuous.
\end{assumption}

\begin{assumption}
\label{hyp:hrpq} The continuous functions $\sh,\sr,\spp, \sq  : (0,+\infty) \to \RR_+$ satisfy: 

  \begin{enumerate}[{\sl i)}]
  \item\label{hyp:h}
    $\sh \in \mC^{1}((0,+\infty), \RR_+)$, $\dot\sh(t) \leq 0$ on $(0,+\infty)$ and
    the limit $h_\infty \eqdef \lim_{t\to\infty} \sh(t)$ is positive.

  \item \label{hyp:rq} $\sr$ and $\sq$ are non-increasing and
  $r_\infty \eqdef \lim_{t\to\infty} \sr(t)$\,, $q_\infty \eqdef \lim_{t\to\infty} \sq(t)$ are positive.

  \item\label{hyp:p}
  $\spp$ converges towards $p_\infty$ as $t\to\infty$.


  \item\label{hyp:stabode} For all $t \in (0,+\infty)$, $\sr(t) \geq \sq(t) / 4$ and $r_\infty > q_\infty / 4$.
  \end{enumerate}
\end{assumption}

These assumptions are sufficient to prove the existence and the uniqueness of
the solution to~\eqref{ode-generale} starting at a time $t_0 > 0$. The
following additional assumption extends the solution to $t_0 = 0$.

\begin{assumption}
\label{hyp:arzela-ascoli}
   Either $\sh, \sr, \spp, \sq \in \mathcal{C}^1([0, + \infty),\RR_+)$, or the following holds:
  \begin{enumerate}[\sl i)]
  \item\label{ass-S}
   For every $x \in \RR^d$, we have $S(x) \geq \nabla F(x)^{\odot 2}$.
    \item  The functions $\frac{\sh}{\spp}$, $\frac{\sh}{\sq - 2 \sr}$, $t
\mapsto t\sh(t)$, $t \mapsto t \sr(t)$, $t \mapsto t\spp(t)$, $t \mapsto
t\sq(t)$ are bounded near zero.
    \item There exists $t_0 > 0$ such that for all $t< t_0$,
    $
      2 \sr(t) - \sq(t) > 0 \,.
    $
    \item There exists $\delta > 0$ such that $\frac{\sh}{\sr}\,, \frac{\spp}{\sq} \in \mathcal{C}^1([0, \delta),\RR_+)$\,.
    \item The initial condition $z_0= (v_0, m_0,x_0) \in\cZ_+$ satisfies
    \begin{equation*}
    m_0 = \nabla F(x_0) \lim_{t \downarrow 0} \frac{\sh(t)}{\sr(t)} \quad \text{and}\quad v_0 = S(x_0) \lim_{t \downarrow 0} \frac{\spp(t)}{\sq(t)}\,.
  \end{equation*}
  \end{enumerate}
\end{assumption}

\begin{remark}
The functions $\sh, \sr, \spp,\sq$ corresponding to \textsc{Adam} satisfy these
conditions. We leave the straightforward verifications to the reader. We
just observe here that the function $S$ that will correspond to our
stochastic algorithm in Section~\ref{sec:as_convergence} below will
satisfy Assumption~\ref{hyp:arzela-ascoli}--\ref{ass-S} by an immediate
application of Jensen's inequality.
\end{remark}

The following theorem slightly generalizes the results of
\cite[Th.~3 and Th.~5]{das-gaz-20}.

\begin{theorem}
\label{th:ode1}
Let Assumptions~\ref{hyp:F_loclip} to \ref{hyp:hrpq} hold true. Consider $z_0
\in \cZ_+$ and $t_0 > 0$.  Then, there exists a unique global solution $\sz :
[t_0, + \infty) \to \cZ_+$ to \eqref{ode-generale} with initial condition
$\sz(t_0) = z_0$. Moreover, $\sz([t_0, + \infty))$ is a bounded subset of~$\cZ_+$.
As $t\to +\infty$, $\sz(t)$ converges towards the set
  \begin{equation}
    \label{eq:Eq_infty}
   \Upsilon \eqdef
   \{ z_\star = ( p_\infty S(x_\star) / q_\infty, 0, x_\star)
   \, : \, x_\star \in \zer\nabla F \}\,.
  \end{equation}
If, additionally, Assumption~\ref{hyp:arzela-ascoli} holds, then we can take $t_0 =0$.
\end{theorem}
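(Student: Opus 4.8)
The plan is to proceed in four steps: local well-posedness and invariance of $\cZ_+$; an a priori bound coming from a Lyapunov function (which also gives global existence and boundedness); the asymptotic analysis yielding convergence to $\Upsilon$; and the extension to $t_0 = 0$. \emph{Step 1.} On $\cZ_+ \times (0,\infty)$ the vector field $g$ of~\eqref{eq:gode} is continuous in $t$ and locally Lipschitz in $z$: $\nabla F$ and $S$ are locally Lipschitz by Assumptions~\ref{hyp:F_loclip} and~\ref{hyp:S_loclip}, and $z \mapsto m/\sqrt{v+\varepsilon}$ is $\mathcal{C}^\infty$ on $\cZ_+$ because $v+\varepsilon \geq \varepsilon > 0$. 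Hence Cauchy--Lipschitz yields a unique maximal solution on some $[t_0, T_{\max})$, and reading the first line of~\eqref{ode-generale} as a scalar linear ODE for $\sv_i$ with nonnegative forcing ($\spp, S \geq 0$) shows $\sv_i(t) \geq 0$, so $\cZ_+$ is forward invariant.

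\emph{Step 2 (the engine).} Following~\cite{das-gaz-20}, I would introduce
\[
  V(t) \;\eqdef\; F(\sx(t)) - F_\star \;+\; \frac{1}{2\,\sh(t)} \sum_{i=1}^{d} \frac{\sm_i(t)^2}{\sqrt{\sv_i(t) + \varepsilon}} ,
\]
denote by $\Phi(t)$ the sum above, and differentiate along the flow. The identity $\frac{d}{dt}F(\sx) = -\langle \nabla F(\sx),\, \sm/\sqrt{\sv+\varepsilon}\rangle$ cancels the gradient cross-term, and after substituting $\sv_i = (\sv_i+\varepsilon) - \varepsilon$ one obtains
\[
  \dot V \;=\; -\frac{1}{\sh}\Bigl( 2\sr - \tfrac{\sq}{2}\Bigr)\Phi \;-\;\frac{\dot\sh}{\sh^2}\,\Phi \;-\;\frac{1}{4\sh}\sum_{i=1}^{d}\frac{\sm_i^2\bigl(\spp S_i(\sx)+\sq\varepsilon\bigr)}{(\sv_i+\varepsilon)^{3/2}} \;\leq\; 0 ,
\]
the nonpositivity using $\sr \geq \sq/4$ (Assumption~\ref{hyp:hrpq}--\ref{hyp:stabode}), $\dot\sh \leq 0$ (Assumption~\ref{hyp:hrpq}--\ref{hyp:h}), and $\spp, S, \sq \geq 0$. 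Since $V \geq 0$ it converges and $V(t) \leq V(t_0)$; coercivity (Assumption~\ref{hyp:F_coerc}) then confines $\sx(\cdot)$ to a compact set $K$, a scalar comparison on $\dot\sv_i = \spp S_i(\sx) - \sq\sv_i$ (using $\sq \geq q_\infty > 0$ and that $\spp$ is bounded on $[t_0,\infty)$) bounds $\sv$, and finally $\Phi \leq \sh(t_0) V(t_0)$ combined with the bound on $\sv$ bounds $\sm$. Thus $\sz([t_0,\infty))$ is bounded, which also rules out finite-time blow-up, so $T_{\max} = \infty$.

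\emph{Step 3 (convergence).} As $2\sr - \sq/2 \to 2r_\infty - q_\infty/2 > 0$ (Assumption~\ref{hyp:hrpq}--\ref{hyp:stabode}) and $\sh \leq \sh(t_0)$, for $t$ large $\dot V \leq -c_0 \Phi$ with $c_0 > 0$, hence $\int^\infty \Phi < \infty$; since $\dot\sz$ is bounded on the trajectory, $\Phi$ is uniformly continuous and Barb\u{a}lat's lemma gives $\Phi(t) \to 0$, i.e. $\sm(t) \to 0$ and $\dot\sx(t) \to 0$. The delicate point is to upgrade $\sm \to 0$ to $\nabla F(\sx) \to 0$, as $\dot\sm$ need not be uniformly continuous: I would argue by contradiction, noting that if $\|\nabla F(\sx(t_n))\| \geq 2\epsilon$ along $t_n \to \infty$, then $t \mapsto \nabla F(\sx(t))$ being Lipschitz on $[t_0,\infty)$ (its speed being $\leq L_{\nabla F,K}\,\|\dot\sx\|_\infty$), one has $\|\nabla F(\sx)\| \geq \epsilon$ on $[t_n, t_n+\delta]$ for a fixed $\delta > 0$; there $\frac{d}{dt}\langle \sm, \nabla F(\sx)\rangle = \sh\|\nabla F(\sx)\|^2 - \sr\langle \sm, \nabla F(\sx)\rangle + \langle \sm, \tfrac{d}{dt}\nabla F(\sx)\rangle \geq \tfrac{1}{2}h_\infty\epsilon^2$ for $n$ large (using $\sh \geq h_\infty > 0$ and $\sm \to 0$), so $\langle \sm, \nabla F(\sx)\rangle$ increases by at least $\tfrac{1}{2}h_\infty\epsilon^2\delta$ over each such interval, contradicting $\langle \sm(t), \nabla F(\sx(t))\rangle \to 0$. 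Hence $\nabla F(\sx(t)) \to 0$ and, by continuity of $\nabla F$ and compactness of $K$, $\dist(\sx(t), \zer\nabla F) \to 0$. Lastly, $w \eqdef \sv - \tfrac{p_\infty}{q_\infty}S(\sx)$ satisfies $\dot w = -\sq w + \phi$ with $\phi(t) \to 0$ (from $\spp \to p_\infty$, $\sq \to q_\infty$, and $\tfrac{d}{dt}S(\sx(t)) \to 0$ since $S$ is Lipschitz on $K$ and $\dot\sx \to 0$); a Gr\"onwall estimate gives $w(t) \to 0$, and combining this with $\sm \to 0$, $\dist(\sx(t),\zer\nabla F) \to 0$, and continuity of $S$ yields $\dist(\sz(t), \Upsilon) \to 0$.

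\emph{Step 4 ($t_0 = 0$).} If $\sh,\sr,\spp,\sq \in \mathcal{C}^1([0,\infty),\RR_+)$ the vector field extends to $t = 0$ and Steps 1--3 apply verbatim with $t_0 = 0$. Otherwise --- and I expect this to be the main technical obstacle --- I would take $t_0^{(n)} \downarrow 0$, run the solutions $\sz^{(n)}$ issued from the compatible data $\bigl(S(x_0)\tfrac{\spp}{\sq}(t_0^{(n)}),\, \nabla F(x_0)\tfrac{\sh}{\sr}(t_0^{(n)}),\, x_0\bigr)$, and prove that $(\sz^{(n)})$ is uniformly bounded and equicontinuous on every $[0,T]$; the boundedness-near-zero conditions in Assumption~\ref{hyp:arzela-ascoli} --- on $\tfrac{\sh}{\spp}$, $\tfrac{\sh}{\sq-2\sr}$, $t\sh$, $t\sr$, $t\spp$, $t\sq$, together with $S \geq \nabla F^{\odot 2}$ and $2\sr - \sq > 0$ near zero --- are exactly what is needed to propagate a Lyapunov-type bound down to $t = 0$. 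Arzel\`a--Ascoli and a diagonal extraction then produce a solution of~\eqref{ode-generale} on $(0,\infty)$ that extends continuously to $t = 0$; the $\mathcal{C}^1([0,\delta))$ hypothesis on $\tfrac{\sh}{\sr},\tfrac{\spp}{\sq}$ and the compatibility relation in Assumption~\ref{hyp:arzela-ascoli} pin its value at $0$ to $z_0$ and yield uniqueness there, while boundedness and convergence are inherited from Steps 2--3.
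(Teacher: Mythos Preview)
Your Step~2 contains a sign error that breaks the monotonicity claim. With your Lyapunov $V = (F-F_\star) + \tfrac{1}{2\sh}\Phi$, the term coming from $\tfrac{d}{dt}\bigl(\tfrac{1}{\sh}\bigr) = -\tfrac{\dot\sh}{\sh^2}$ is \emph{nonnegative} when $\dot\sh\leq 0$, not nonpositive as you assert. Since the assumptions impose no quantitative bound on $|\dot\sh|$, this term can dominate the good term $-\tfrac{1}{\sh}(2\sr-\sq/2)\Phi$, and $\dot V\leq 0$ fails in general. The paper avoids this by placing $\sh$ on the other factor: with $\mE(t,z)=\sh(t)(F(x)-F_\star)+\tfrac12\sum_i m_i^2/\sqrt{v_i+\varepsilon}$ one gets the term $\dot\sh\,(F-F_\star)\leq 0$, and the computation~\eqref{lyap-decrease} yields $\tfrac{d}{dt}\mE\leq -(\sr-\sq/4)\sum_i m_i^2/\sqrt{v_i+\varepsilon}$, from which boundedness and $\int^\infty\Phi<\infty$ follow directly. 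Swapping $V$ for $\mE=\sh V$ repairs your argument completely; the rest of your Step~2 (coercivity for $\sx$, comparison for $\sv$) is fine.

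Once Step~2 is fixed, your Step~3 is a genuinely different route from the paper's and it works. The paper ``autonomizes'' the system by adjoining $\dot\sss=-\sss^2$, applies LaSalle's invariance principle, and identifies the $\omega$-limit set as the equilibrium set of the limit autonomous ODE~\eqref{odeinfty}. You instead run a Barb\u{a}lat argument to get $\sm\to 0$, then a direct contradiction argument (via growth of $\langle\sm,\nabla F(\sx)\rangle$ on intervals where $\|\nabla F(\sx)\|$ stays bounded below) to force $\nabla F(\sx)\to 0$, and finally a Gr\"onwall estimate on $w=\sv-\tfrac{p_\infty}{q_\infty}S(\sx)$ to get $w\to 0$. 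This is more hands-on but entirely self-contained and avoids the extended phase space; the paper's approach is more systematic and has the advantage of producing the semiflow $\Phi$ and the strict Lyapunov structure that are reused verbatim in the stochastic analysis of Section~\ref{sec:proof_as_convergence}. One technical caution in your version: $S$ is only locally Lipschitz, so $t\mapsto S(\sx(t))$ is Lipschitz but not $\mathcal C^1$; your Gr\"onwall step should be phrased for the absolutely continuous $w$ with $\dot w+\sq w=\phi$ holding a.e.\ and $\operatorname{ess\,sup}_{[T,\infty)}|\phi|\to 0$, which is enough. Step~4 matches the paper's sketch.
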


\begin{remark}
Th.~\ref{th:ode1} only shows the convergence of the trajectory $\sz(t)$ towards a set.
Convergence of the trajectory towards a single point is not guaranteed when the set $\Upsilon$
is not countable.
\end{remark}

\begin{remark}\label{rmk:simp_ode}
A simpler version of~\eqref{ode-generale} is obtained when omitting the momentum term. It reads:
\begin{equation}
\begin{cases}
  \dot \sv(t) &= \spp(t) S(\sx(t)) - \sq(t) \sv(t) \\
  \dot \sx(t) &= - {\nabla F(\sx(t))}/ {\sqrt{\sv(t) + \varepsilon}}\,.
\end{cases}
 \tag{ODE-$1'$} \label{ode-adagrad}
\end{equation}
This ODE encompasses the algorithms of the family of \textsc{RMSProp}
\cite{tieleman2012lecture}, as shown in \cite{barakat-bianchi21,das-gaz-20}.
The approach for proving the previous theorem can be adapted to~\eqref{ode-adagrad}
with only minor modifications. In the proofs below, we will point out the
particularities of~\eqref{ode-adagrad} when necessary.
\end{remark}

The following paragraph is devoted to a particular case
of~\eqref{ode-generale}, which does not satisfy
Assumption~\ref{hyp:hrpq}, and which requires a more involved treatment
than~\eqref{ode-adagrad}.

\subsection{The Nesterov case}

The authors of \cite{cab-eng-gad09}, \cite{su_boyd_candes2016} and others
studied the ODE
\[
\ddot{\sx}(t) + \frac{\alpha}{t} \dot{\sx}(t) + \nabla F(\sx(t)) = 0, \quad
 \alpha > 0 , \quad F\in \mC^1(\RR^d, \RR),
\]
which Euler's discretization generates the well-known Nesterov's accelerated
gradient algorithm, see also  \cite{att-chb-pey-red18,auj-dos-ron18}. This ODE
can be rewritten as
\begin{equation}
\begin{cases}
    \dot \sm(t) &= \nabla F(\sx(t)) - \frac{\alpha}{t} \sm(t) \\
    \dot \sx(t) &= - \sm(t),
\end{cases}
\tag{ODE-N}\label{ode-true-nesterov}
\end{equation}
which is formally the particular case of~\eqref{ode-generale} that is taken for
$\spp(t) = \sq(t) = 0$, $\sh(t) = 1$, and $\sr(t) = \alpha / t$.  Obviously,
this case is not covered by Assumption~\ref{hyp:hrpq}.  Moreover, it turns out
that, contrary to the situation described in Remark~\ref{rmk:simp_ode} above,
this case cannot be dealt with by a straightforward adaptation of the proof of
Th.~\ref{th:ode1}. The reason for this is as follows. Heuristically, the
proof of Th.~\ref{th:ode1} is built around the fact that the solution
of~\eqref{ode-generale} ``shadows'' for large $t$ the solution of the
autonomous ODE
\begin{equation*}
  \begin{cases}
    \dot \sv(t) &= p_\infty S(\sx(t)) - q_\infty \sv(t) \\
    \dot \sm(t) &= h_\infty \nabla F(\sx(t)) - r_\infty \sm(t) \\
    \dot \sx(t) &= - \frac{\sm(t)}{\sqrt{\sv(t) + \varepsilon}} ,
  \end{cases}
\end{equation*}
and the latter can be shown to converge to the set $\Upsilon$ defined in
Eq.~\eqref{eq:Eq_infty}, either under Assumption~\ref{hyp:hrpq} or for the
algorithms covered by Remark~\ref{rmk:simp_ode}. This idea does not work anymore for~\eqref{ode-true-nesterov}, for its large--$t$ autonomous
counterpart
\begin{equation*}
  \begin{cases}
    \dot{\sm}(t)&= \nabla F(\sx(t)) \\
    \dot{\sx}(t)&= - \sm(t) .
  \end{cases}
\end{equation*}
can have solutions that do not converge to the critical points of $F$.
As an example of such solutions, take $d=1$ and $F(x) = {x^2}/{2}$. Then,
$t \mapsto (\cos(t), \sin(t))$ is an oscillating solution of the latter ODE.

Yet, we have the following result. Up to our knowledge, the proof of the convergence below as $t\to +\infty$ is new.
\begin{theorem}
\label{th:ode-nesterov}
Let Assumptions~\ref{hyp:F_loclip} and \ref{hyp:F_coerc} hold true. Then, for
each  $x_0 \in \RR^d$, there exists a unique bounded global solution $(\sm,
\sx) : \RR_+ \to \RR^d \times \RR^d$ to~\eqref{ode-true-nesterov} with the
initial condition $(\sm(0), \sx(0)) = (0, x_0)$.  As $t\to +\infty$, $(\sm(t),
\sx(t))$ converges towards the set
  \begin{equation}
    \label{eq:bar_Upsilon}
  \bar{\Upsilon} \eqdef \{ (0, x_\star) \, : \,
   x_\star\in \zer\nabla F \}.
  \end{equation}
\end{theorem}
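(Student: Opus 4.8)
The plan is to proceed in three stages: local existence/uniqueness, global existence with an a priori bound, and asymptotic convergence to $\bar\Upsilon$. For the first stage, since $\nabla F$ is locally Lipschitz (Assumption \ref{hyp:F_loclip}), the vector field $(m,x)\mapsto(\nabla F(x)-\tfrac{\alpha}{t}m,\,-m)$ is locally Lipschitz in $(m,x)$ and continuous in $t$ on $(0,\infty)$; the Cauchy--Lipschitz theorem gives a unique maximal solution starting from any $t_0>0$. Extending to $t_0=0$ requires care because of the $\alpha/t$ singularity: I would rewrite the system near $t=0$, noting that with $m(0)=0$ the quotient $m(t)/t$ stays bounded (e.g.\ by a fixed-point argument on $m(t)=t^{-\alpha}\int_0^t s^{\alpha}\nabla F(\sx(s))\,ds$, or equivalently $\dot\sx=-t^{-\alpha}\int_0^t s^\alpha\nabla F(\sx(s))ds$), which yields a $\mC^1$ solution on a neighborhood of $0$ with $\dot\sx(0)=0$. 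This mirrors the standard treatment of the Nesterov ODE in \cite{su_boyd_candes2016}.

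For the global bound, the key tool is a Lyapunov function. I would use the energy $E(t)=\tfrac12\|\sm(t)\|^2+F(\sx(t))$, whose derivative along the flow is
\[
\dot E(t)=\langle\sm(t),\nabla F(\sx(t))-\tfrac{\alpha}{t}\sm(t)\rangle+\langle\nabla F(\sx(t)),-\sm(t)\rangle=-\tfrac{\alpha}{t}\|\sm(t)\|^2\le 0 .
\]
Hence $E$ is non-increasing, so $F(\sx(t))\le E(0)=F(x_0)$ for all $t$; coercivity of $F$ (Assumption \ref{hyp:F_coerc}) then confines $\sx(t)$ to a compact sublevel set, and $\|\sm(t)\|^2\le 2(E(0)-F_\star)$ bounds $\sm(t)$. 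Boundedness of the trajectory precludes finite-time blow-up, giving a unique bounded global solution on $\RR_+$.

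For convergence to $\bar\Upsilon$, the heuristic ``shadowing'' argument fails (as the excerpt explains, the limiting autonomous system has oscillating solutions), so I would instead extract convergence directly from the energy dissipation. Integrating $\dot E$ gives $\int_1^\infty t^{-1}\|\sm(t)\|^2\,dt\le E(1)-F_\star<\infty$, and separately one shows $\int_1^\infty t^{-1}\|\nabla F(\sx(t))\|^2\,dt<\infty$ — this is the classical integral estimate for the Nesterov ODE, obtained by testing against an auxiliary functional such as $t\mapsto\langle\nabla F(\sx(t)),\sm(t)\rangle$ combined with the bound on $\ddot\sx$. Since $E(t)$ converges to some limit $E_\infty\ge F_\star$ and $\sx$ lives in a compact set, any limit point $(m^\ast,x^\ast)$ of the trajectory along a sequence $t_k\to\infty$ must satisfy $\tfrac12\|m^\ast\|^2+F(x^\ast)=E_\infty$; the integral bounds together with the uniform continuity of $\sm$ and $\nabla F\circ\sx$ (their derivatives are bounded on the compact set) force $\sm(t)\to 0$ and $\nabla F(\sx(t))\to 0$ as $t\to\infty$ — this is where a Barbalat-type lemma enters, ruling out that the integrands stay away from $0$ on long intervals. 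Consequently every limit point lies in $\{(0,x^\ast):x^\ast\in\zer\nabla F\}=\bar\Upsilon$, and since the set of limit points is nonempty (compactness) and the distance $\dist((\sm(t),\sx(t)),\bar\Upsilon)$ has $0$ as its only limit value, $(\sm(t),\sx(t))\to\bar\Upsilon$.

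\textbf{Main obstacle.} The delicate step is establishing $\int_1^\infty t^{-1}\|\nabla F(\sx(t))\|^2\,dt<\infty$ and then upgrading the integral decay to a genuine limit: without convexity of $F$ one cannot use the usual Nesterov Lyapunov function involving $\|x-x^\ast\|^2$, so the integrability must be squeezed out of the energy identity and a carefully chosen cross-term functional, and the Barbalat-type passage from integrability to pointwise convergence needs the second derivative bounds that come from the a priori compactness — I expect the bookkeeping there, rather than any conceptual difficulty, to be the crux.
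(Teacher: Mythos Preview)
Your treatment of existence, uniqueness, and boundedness is correct and matches the paper's (which defers to \cite{cab-eng-gad09,su_boyd_candes2016} for existence/uniqueness and uses the same energy $E(t)=\tfrac12\|\sm(t)\|^2+F(\sx(t))$ for the bound).

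The convergence argument, however, has a genuine gap that is more than bookkeeping. The energy identity does give $\int_1^\infty t^{-1}\|\sm(t)\|^2\,dt<\infty$, and your auxiliary-functional computation can indeed be pushed through to yield $\int_1^\infty t^{-1}\|\nabla F(\sx(t))\|^2\,dt<\infty$. But a Barbalat-type lemma does \emph{not} let you pass from these $t^{-1}$-weighted integrals to pointwise convergence: a uniformly continuous nonnegative function can equal a fixed $c>0$ on unit-length intervals centred at $t_k=e^k$, contribute only $\sum_k c^2 e^{-k}<\infty$ to the weighted integral, and still fail to tend to zero. Uniform continuity rules out bumps that are too short, but the weight $1/t$ makes fixed-length bumps at exponentially spaced times invisible to the integral. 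Closing this gap would require exploiting the ODE structure in a way your sketch does not supply, and it is precisely this obstruction (the limiting autonomous system $\dot\sm=\nabla F(\sx),\ \dot\sx=-\sm$ admits non-trivial bounded orbits, so the standard $\omega$-limit-set argument also fails) that the paper flags as the reason Theorem~\ref{th:ode-nesterov} needs a separate treatment.

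The paper sidesteps the issue by a change of variable. Setting $\su(t)=\sx(\kappa\sqrt t)$ and $\sy(t)=\tfrac{\kappa}{2\sqrt t}\,\sm(\kappa\sqrt t)$ with $\kappa=\sqrt{2\alpha+2}$ turns the Nesterov ODE into the heavy-ball ODE $\dot\sy=\tfrac{\beta}{t}(\nabla F(\su)-\sy),\ \dot\su=-\sy$. The point is that the \emph{limiting} autonomous system of the latter is $\dot\sy=0,\ \dot\su=-\sy$, whose solutions are affine in $t$; boundedness of $\su$ then forces $\sy\equiv 0$ and $\su$ constant. An Arzel\`a--Ascoli compactness argument on the autonomized flows transfers this back: every $\omega$-limit point $(m,x)$ of the Nesterov trajectory produces a limiting heavy-ball trajectory that must be constant, hence the corresponding limiting Nesterov trajectory is constant, hence $m=0$ and $\nabla F(x)=0$. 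The time change converts the conservative oscillator that blocks your direct approach into a system whose bounded orbits are trivially equilibria.
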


\subsection{Related works}

The continuous-time dynamical system \eqref{ode-generale} we consider was first
introduced in \cite[Eq.~(2.1)]{das-gaz-20} with $S = \nabla F^{\odot 2}$.
Th.~\ref{th:ode1} above is roughly the same as \cite[Ths.~3
and~5]{das-gaz-20}, with some slight differences regarding the assumptions
on the function $F$, or Assumption~\ref{hyp:hrpq}-\ref{hyp:stabode}.
We point out that the main focus of \cite{das-gaz-20} is to study the
properties of the \textit{deterministic continous-time} dynamical system
\eqref{ode-generale}. In the present work, we highlight that the purpose of
Th.~\ref{th:ode1} is to pave the way to our analysis of the corresponding
\textit{stochastic algorithms} in Section~\ref{sec:as_convergence}.

Concerning Th.~\ref{th:ode-nesterov}, the existence and the uniqueness of a
global solution to \eqref{ode-true-nesterov} has been previously shown in the
literature, for instance in \cite[Prop.~2.1]{cab-eng-gad09} or in
\cite[Th.~1]{su_boyd_candes2016}.
The convergence statement in Th.~\ref{th:ode-nesterov} is new to the best of
our knowledge. In particular, we stress that we do not make any convexity
assumption on $F$. The closest result we are aware of is the one of
Cabot-Engler-Gadat \cite{cab-eng-gad09}. In \cite[Prop.~2.5]{cab-eng-gad09}, it
is shown that if $\sx(t)$ converges towards some point $\bar x$, then
necessarily $\bar x$ is a critical point of $F$.
Our result in Th.~\ref{th:ode-nesterov} strengthens this statement, by establishing that
 $\sx(t)$ actually converges to the set of critical points.


\section{Stochastic Algorithms}
\label{sec:as_convergence}

In this section, we discuss the asymptotic behavior of stochastic algorithms
that consist in noisy Euler's discretizations of~\eqref{ode-generale}
and~\eqref{ode-true-nesterov} studied in the previous section.

We first set the stage.  Let $(\Xi, \mcT, \mu)$ be a probability space.
Denoting as $\mcB(\RR^d)$ the Borel $\sigma$-algebra on $\RR^d$, consider a
$\mcB(\RR^d) \otimes \mcT$--measurable function $f:\bR^d\times \Xi\to \RR$ that
satisfies the following assumption.

\begin{assumption}
  \label{hyp:model} The following conditions hold:
  \begin{enumerate}[{\sl i)}]
  \item For every $x \in \RR^d$, $f(x,\cdot)$ is $\mu$--integrable.
  \item\label{nablaf-integ}
   For every $s \in \Xi$, the map $f(\cdot, s)$ is differentiable.
  Denoting as $\nabla f(x,s)$ its gradient w.r.t.~$x$, the function
  $\nabla f(x,\cdot)$ is integrable.
  \item\label{kappa}
   There exists a measurable map $\kappa : \RR^d \times \Xi \to \RR_+$
   s.t.~for every $x \in\RR^d$ :
   \begin{enumerate}[{\sl a)}]
   \item The map $\kappa(x,\cdot)$ is $\mu$--integrable,

   \item There exists $\varepsilon > 0$ s.t. for every $s \in \Xi$,
   \end{enumerate}
   \end{enumerate}
   $$
   \forall\, u, v \in B(x,\varepsilon), \
\| \nabla f(u,s) - \nabla f(v,s) \| \leq \kappa(x,s) \| u - v \|\,.
$$
\end{assumption}

Under Assumption~\ref{hyp:model}, we can define the mapping $F:\bR^d\to\bR$ as
\begin{equation}
\label{eq:F_S}
F(x) = \EE_\xi [ f(x,\xi)]
\end{equation}
for all $x \in \RR^d$, where we write
$\EE_\xi\varphi(\xi) = \int \varphi(\xi) \mu(d\xi)$. It is easy to see that
the mapping $F$ is differentiable,
\[
\nabla F(x) = \EE_\xi [\nabla f(x,\xi)]
\]
for all $x \in \RR^d$, and $\nabla F$ is locally Lipschitz.

Let $(\gamma_n)_{n \geq 1}$ be a sequence of positive real numbers satisfying
\begin{assumption}
  \label{hyp:stepsizes}
   $\gamma_{n+1}/\gamma_n\to 1$ and $\sum_{n} \gamma_n = +\infty$.
\end{assumption}

Define for every integer $n \geq 1$
\[
 \tau_n = \sum_{k=1}^n \gamma_k\,.
\]
Let $(\Omega, \mcF, \PP)$ be a probability space, and let
$(\xi_n:n\geq 1)$ be a sequence of iid random variables defined from
$(\Omega, \mcF, \PP)$ into $(\Xi, \mcT, \mu)$ with the distribution $\mu$.

\subsection{General algorithm}
\label{gal-adam-sto}

Our first algorithm is a discrete and noisy version of \eqref{ode-generale}.
Let $z_0 = (v_0, m_0, x_0) \in \cZ_+$ and
$h_0,r_0,p_0,q_0 \in (0,\infty)$. Define for every $n \geq 1$
\begin{equation}
  \label{eq:hrpq_seq}
 h_n = \sh(\tau_n), \ r_n = \sr(\tau_n), \ p_n = \spp(\tau_n), \ \text{and}
 \ q_n = \sq(\tau_n).
\end{equation}

The algorithm is written as follows.
\begin{algorithm}
   \caption{(general algorithm)}
   \label{algosto}
\begin{algorithmic}
   \STATE {\bfseries Initialization:} $z_0 \in \cZ_+$.
   \FOR{$n=1$ {\bfseries to} $n_{\text{iter}}$}
   \STATE $v_{n+1} = ( 1 - \gamma_{n+1} q_n ) v_n
     + \gamma_{n+1} p_{n} \nabla f(x_n,\xi_{n+1})^{\odot 2}$
   \STATE $m_{n+1} = ( 1 - \gamma_{n+1} r_n) m_n
       + \gamma_{n+1} h_n \nabla f(x_n,\xi_{n+1})$
   \STATE $x_{n+1} = x_n - \gamma_{n+1} {m_{n+1}} / {\sqrt{v_{n+1} + \varepsilon}}$ \,.
   \ENDFOR
\end{algorithmic}
\end{algorithm}

We suppose throughout the paper that $1 - \gamma_{n+1}q_n \geq 0$ for all $n \in \NN$.
This will guarantee that the quantity $\sqrt{v_n + \varepsilon}$ is always well-defined (see~Algorithm~\ref{algosto}).
This mild assumption is satisfied as soon as $q_0 \leq \frac{1}{\gamma_1}$ since the sequence $(q_n)$ is non-increasing
and the sequence of stepsizes $(\gamma_n)$ can also be supposed to be non-increasing.

Since this algorithm makes use of the function $\nabla f(x,\xi)^{\odot 2}$\,,
a strengthening of Assumption~\ref{hyp:model} is required:
\begin{assumption}
  \label{hyp:model_bis}
 In Assumption~\ref{hyp:model}, Conditions \ref{nablaf-integ}
 and \ref{kappa} are respectively replaced with the stronger conditions
 \begin{itemize}
 \item[ii')] For each $x\in\RR^d$, the function
 $\nabla f(x,\cdot)^{\odot 2}$ is $\mu$~-integrable.
 \item[iii')] There exists a measurable map
    $\kappa : \RR^d \times \Xi \to \RR_+$ s.t.~for every $x \in\RR^d$:
   \begin{enumerate}[{\sl a)}]
   \item The map $\kappa(x,\cdot)$ is $\mu$--integrable.
   \item There exists $\varepsilon > 0$ s.t.
   \end{enumerate}
   \end{itemize}
 \[
   \forall\, u, v \in B(x,\varepsilon), \
\| \nabla f(u,s) - \nabla f(v,s) \| \vee
  \| \nabla f(u,s)^{\odot 2} - \nabla f(v,s)^{\odot 2} \|
      \leq \kappa(x,s) \| u - v \|.
  \]
\end{assumption}

Under Assumption~\ref{hyp:model_bis}, we can also define the mapping $S:\bR^d\to\bR^d$ as:
\[
S(x) = \EE_\xi [\nabla f(x,\xi)^{\odot 2}]
\]
for all $x \in \RR^d$. Notice that Assumptions~\ref{hyp:F_loclip}
 and~\ref{hyp:S_loclip} are satisfied for $F$ and $S$.

\begin{assumption}
  \label{hyp:moment}
  Assume either of the following conditions.
  \begin{enumerate}[i)]
  \item \label{hyp:moment-q}
  There exists $q \geq 2$ s.t. for every compact set $\mK \subset \RR^d$,
  \[
  \sup_{x\in \mK} \EE_\xi \| \nabla f(x,\xi) \|^{2q} < \infty
  \quad \text{and} \quad
  \sum_n \gamma_n^{1+q/2} < \infty\,.
  \]
  \item \label{hyp:subgauss-noise}
  For every compact set $\mK \subset \RR^d$, there exists a real $\sigma_\mK \neq 0$ s.t.
  \begin{align*}
  \EE_\xi \exp \ps{u, \nabla f(x,\xi) - \nabla F(x)} \1_{x\in\mK}
   &\leq \exp\left( \sigma_\mK^2 \|u\|^2 / 2\right)
   \  \text{and} \\
  \EE_\xi \exp \ps{u, \nabla f(x,\xi)^{\odot 2} - S(x)} \1_{x\in\mK}
   &\leq \exp\left( \sigma_\mK^2 \|u\|^2 / 2\right)\,,
  \end{align*}
  for every $x, u \in \RR^d$.
  Moreover, for every $\alpha > 0$,
  $
  \sum_n \exp(-\alpha / \gamma_n) < \infty\,.
  $
  \end{enumerate}
\end{assumption}

\begin{remark} We make the following comments regarding
 Assumption~\ref{hyp:moment}.
\begin{itemize}
  \item Assumption~\ref{hyp:moment}-\ref{hyp:moment-q} allows to use larger stepsizes in comparison to the classical condition~$\sum_n \gamma_n^2<\infty$ which corresponds to the particular case $q = 2$.

  \item Recall that a random vector $X$ is said to be subgaussian if there
exists a real $\sigma \neq 0$ s.t.  $ \EE e^{\ps{u,X}} \leq e^{\sigma^2 \| u
\|^2 / 2} $ for every constant vector $u \in \RR^d$.  In
Assumption~\ref{hyp:moment}-\ref{hyp:subgauss-noise}, the subgaussian noise
offers the possibility to use a sequence of stepsizes with an even slower decay
rate than in Assumption~\ref{hyp:moment}--\ref{hyp:moment-q}.
\end{itemize}
\end{remark}

  \begin{assumption}
    \label{hyp:sard}
    The set $F(\{x \, : \, \nabla F(x) = 0 \})$ has an empty interior.
  \end{assumption}

  \begin{remark}
  Assumption~\ref{hyp:sard} excludes a pathological behavior of the objective
  function $F$ at critical points.  It is satisfied when $F \in \mC^k(\RR^d,
  \RR)$ for $k \geq d$. Indeed, in this case, Sard's theorem stipulates that the
  Lebesgue measure of $F(\{x \, : \, \nabla F(x) = 0 \})$ is zero in $\RR$.
  \end{remark}

\begin{theorem}
\label{th:as_conv_under_stab}
Let Assumptions~\ref{hyp:F_coerc}, \ref{hyp:hrpq}, and
\ref{hyp:stepsizes}--\ref{hyp:sard} hold true.
Assume that the random sequence
$(z_n =(v_n,m_n,x_n):n\in \NN)$ given by Algorithm~\ref{algosto}
is bounded with probability one. Then, w.p.1, the sequence $(z_n)$ converges
towards the set $\Upsilon$ defined in Eq.~\eqref{eq:Eq_infty}.  If, in
addition, the set of critical points of the objective function $F$ is finite or
countable, then w.p.1, the sequence $(z_n)$ converges to a single point of
$\Upsilon$.
\end{theorem}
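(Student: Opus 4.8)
The natural strategy is the classical ODE method for stochastic approximation as developed in Bena\"im's work: interpret the iterates $(z_n)$ as a noisy Euler discretization of the non-autonomous ODE \eqref{ode-generale}, show that the interpolated process is an asymptotic pseudotrajectory (APT) of the flow, and then invoke Theorem~\ref{th:ode1} on the limiting behavior of the ODE together with a Lyapunov argument to pin down the limit set. First I would introduce the continuous-time interpolated process $\bar z(\cdot)$ obtained by piecewise-affine interpolation of $(z_n)$ over the time grid $(\tau_n)$, and verify the standard ingredients: the boundedness assumed in the statement confines the trajectory to a fixed compact $\mathcal K \subset \cZ_+$; on this compact the drift $g(\cdot,\cdot)$ and the coefficient functions $\sh,\sr,\spp,\sq$ are bounded and (locally) Lipschitz by Assumptions~\ref{hyp:F_loclip}, \ref{hyp:S_loclip}, \ref{hyp:hrpq}; and the stepsize condition $\gamma_{n+1}/\gamma_n \to 1$, $\sum \gamma_n = \infty$ (Assumption~\ref{hyp:stepsizes}) is exactly what is needed for the interpolation to have the right time-scaling.

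The second step is to control the noise. Writing the martingale-increment decomposition $\nabla f(x_n,\xi_{n+1}) = \nabla F(x_n) + \eta_{n+1}$ (and similarly for the $\odot 2$ term with $S$), I would show that the rescaled noise sums $\sum \gamma_{n+1}\eta_{n+1}$ converge, equivalently that the discrete process tracks the mean ODE. Under Assumption~\ref{hyp:moment}-\ref{hyp:moment-q} this follows from a $L^{q}$ maximal inequality (Burkholder--Davis--Gundy) combined with $\sum \gamma_n^{1+q/2} < \infty$; under Assumption~\ref{hyp:moment}-\ref{hyp:subgauss-noise} it follows from a subgaussian concentration / Bernstein-type bound together with $\sum \exp(-\alpha/\gamma_n) < \infty$ and Borel--Cantelli. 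Either way one gets, w.p.1, that $\bar z$ is an APT of the flow induced by the non-autonomous vector field; because of the non-autonomy I would work with the flow on the augmented space (appending the time variable, or directly using the asymptotic autonomous ODE that \eqref{ode-generale} shadows for large $t$, as described in the discussion preceding Theorem~\ref{th:ode-nesterov}). By Theorem~\ref{th:ode1}, every solution of that limiting system converges to $\Upsilon$, so the limit set of $\bar z$ is internally chain transitive and contained in $\Upsilon$.

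To upgrade ``limit set contained in $\Upsilon$'' to genuine convergence of $(z_n)$ to $\Upsilon$, I would use a Lyapunov function — essentially the one already implicit in the proof of Theorem~\ref{th:ode1} (a suitably weighted combination of $F(x)$, $\|m\|^2$ and a term in $v$, the same energy that yields the stability bound in that theorem) — and show it is a strict Lyapunov function for the limiting flow whose set of critical values is $F(\zer\nabla F)$. Assumption~\ref{hyp:sard} guarantees this set has empty interior, which via the standard argument (e.g. Bena\"im--Hofbauer--Sorin) prevents the Lyapunov value from oscillating and forces $\operatorname{dist}(z_n,\Upsilon)\to 0$. Finally, if $\zer\nabla F$ is countable then $\Upsilon$ is countable, so the connected internally chain transitive limit set must be a single point, giving convergence of $(z_n)$ to one point of $\Upsilon$.

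The main obstacle is the non-autonomous nature of the recursion: the usual APT/ICT machinery is stated for autonomous flows, so I expect the technical heart of the argument to be the ``shadowing'' step — rigorously transferring the decreasing-coefficient non-autonomous dynamics to its autonomous large-time limit while keeping uniform control on the compact $\mathcal K$, and checking that the error between $g(z,t)$ and its $t\to\infty$ limit is integrable against the stepsizes (this is where Assumption~\ref{hyp:hrpq}, in particular the monotonicity of $\sh,\sr,\sq$ and positivity of the limits $h_\infty, r_\infty, q_\infty$, together with $\gamma_{n+1}/\gamma_n\to 1$, is used). A secondary but routine difficulty is handling the square-root nonlinearity $m/\sqrt{v+\varepsilon}$, which is globally Lipschitz on $\mathcal K$ precisely because $\varepsilon>0$ keeps $v+\varepsilon$ bounded away from zero, so no extra care beyond bookkeeping is needed there.
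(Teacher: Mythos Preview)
Your overall strategy matches the paper's: write the iterates as a noisy Euler scheme, verify the APT property for the interpolated process on the time-augmented state space, and then use a Lyapunov argument together with Assumption~\ref{hyp:sard} to pin the limit set inside $\Upsilon$. The noise control you sketch (Burkholder under Assumption~\ref{hyp:moment}-\ref{hyp:moment-q}, subgaussian concentration under~\ref{hyp:moment}-\ref{hyp:subgauss-noise}) is also what the paper does.

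There is, however, a genuine gap in your Lyapunov step. The energy $\mE_\infty(z) = h_\infty(F(x)-F_\star) + \tfrac12\| m/(v+\varepsilon)^{\odot 1/4}\|^2$ from the proof of Theorem~\ref{th:ode1} is \emph{not} a strict Lyapunov function for the limiting autonomous flow $\Phi^\infty$: along that flow its derivative vanishes exactly when $m=0$, so $\mE_\infty$ is constant on the entire set $\{(v,0,x):\nabla F(x)=0,\, v\in\RR_+^d\}$, which is strictly larger than the equilibrium set $\Upsilon$ (the $v$-component is unconstrained). In the ODE proof this is repaired after the fact by an $\omega$-limit-set invariance argument, but that argument does not transfer to the APT/ICT framework, where one needs the Lyapunov function itself to be strict in order to invoke Bena\"im's Prop.~6.4. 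The paper handles this (Proposition~\ref{prop:Wstrict}) by constructing
\[
W_\delta(z) \;=\; \mE_\infty(z)\;-\;\delta\,\langle \nabla F(x),\, m\rangle \;+\;\delta\,\|q_\infty v - p_\infty S(x)\|^2 ,
\]
where the cross term $-\delta\langle\nabla F(x),m\rangle$ generates a $-\|\nabla F(x)\|^2$ contribution after differentiation along the flow, and the last term forces $v$ to its equilibrium value $p_\infty S(x)/q_\infty$. Choosing $\delta$ small enough makes $W_\delta$ strictly decreasing off $\Upsilon$, and this construction is the part of the argument your plan does not anticipate.

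A smaller technical point: since $x_{n+1}$ in Algorithm~\ref{algosto} is computed from $(v_{n+1},m_{n+1})$ rather than $(v_n,m_n)$, the paper works with the shifted variable $\bar z_n=(v_n,m_n,x_{n-1})$ to obtain a clean Robbins--Monro form $\bar z_{n+1}=\bar z_n+\gamma_{n+1}g(\bar z_n,\tau_n)+\gamma_{n+1}\eta_{n+1}+\gamma_{n+1}\varsigma_{n+1}$; the extra remainder $\varsigma_{n+1}$ is easily shown to vanish, but the shift must be made explicit before the APT verification goes through.
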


We now deal with the boundedness problem of the sequence $(z_n)$.
We introduce an additional assumption for this purpose.
\begin{assumption}
  \label{hyp:stability}
    The following conditions hold.
    \begin{enumerate}[{\sl i)}]
    \item $\nabla F$ is (globally) Lipschitz continuous.
  \item \label{momentgrowth} There exists $C>0$ s.t. for all $x \in \RR^d$,
    $\EE_\xi [\|\nabla f(x,\xi)\|^2] \leq C (1+F(x))$\,,
  \item $\sum_n \gamma_{n}^2 < \infty$\,.
  \end{enumerate}
  \end{assumption}

\begin{theorem}
\label{th:stab}
Let Assumptions~\ref{hyp:F_coerc}, \ref{hyp:hrpq}, \ref{hyp:stepsizes},
 \ref{hyp:model_bis}, \ref{hyp:moment}-\ref{hyp:moment-q} (with $q=2$)
and \ref{hyp:stability} hold.
Then, the sequence
$(v_n,m_n,x_n)$ given by Algorithm~\ref{algosto} is bounded with probability one.
\end{theorem}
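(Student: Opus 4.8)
The plan is to construct a Lyapunov function adapted to Algorithm~\ref{algosto} and to show that it is an almost-supermartingale in the sense of Robbins--Siegmund, which will force its almost sure boundedness and hence that of $(v_n, m_n, x_n)$. The natural candidate, mimicking the energy used in the proof of Theorem~\ref{th:ode1}, is of the form
\[
V_n = F(x_n) + \frac{a_n}{2}\left\| \frac{m_n}{(v_n+\varepsilon)^{\odot 1/4}} \right\|^2 + (\text{correction terms}),
\]
where $a_n$ is a bounded positive sequence chosen so that the cross terms are controlled; the condition $\sr(t) \geq \sq(t)/4$ from Assumption~\ref{hyp:hrpq}--\ref{hyp:stabode} is precisely what makes such a quadratic form in $m_n$ (with the $1/\sqrt{v_n+\varepsilon}$ weighting) dissipative along the deterministic part of the dynamics. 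First I would expand $V_{n+1} - V_n$ using the three update equations, separating the increment into (a) a ``drift'' part that, thanks to the $C^1$ bounds and monotonicity in Assumption~\ref{hyp:hrpq}, is bounded above by $-c\,\gamma_{n+1}\|m_{n+1}\|^2/\sqrt{v_{n+1}+\varepsilon}$ plus lower-order terms, (b) a martingale-difference part coming from $\nabla f(x_n,\xi_{n+1}) - \nabla F(x_n)$ and $\nabla f(x_n,\xi_{n+1})^{\odot 2} - S(x_n)$, which has zero conditional expectation, and (c) a remainder of order $\gamma_{n+1}^2$ times a second-order Taylor term.

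The key estimates are then the following. Using the global Lipschitz continuity of $\nabla F$ (Assumption~\ref{hyp:stability}--i), the Taylor remainder $F(x_{n+1}) - F(x_n) - \langle \nabla F(x_n), x_{n+1}-x_n\rangle$ is bounded by $\frac{L}{2}\|x_{n+1}-x_n\|^2 \leq \frac{L}{2}\gamma_{n+1}^2 \|m_{n+1}\|^2/(v_{n+1}+\varepsilon) \leq \frac{L}{2\varepsilon}\gamma_{n+1}^2\|m_{n+1}\|^2$. The linear-growth bound in Assumption~\ref{hyp:stability}--\ref{momentgrowth}, namely $\EE_\xi\|\nabla f(x,\xi)\|^2 \leq C(1+F(x))$, is what lets me close the recursion: the second-order and martingale terms, after taking conditional expectation, are dominated by $\gamma_{n+1}^2\,\mathrm{const}\cdot(1+F(x_n)) \leq \gamma_{n+1}^2\,\mathrm{const}\cdot(1+V_n)$ (here $\varepsilon>0$ keeps the denominators away from zero and $F$ is bounded below by coercivity, Assumption~\ref{hyp:F_coerc}). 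One should also keep track of the fact that $\|m_{n+1}\|^2$ itself, appearing both with a negative sign (drift) and a positive $\gamma_{n+1}^2$ factor (remainder), is absorbed for $n$ large since $\gamma_n \to 0$; and that $v_{n+1}$ is controlled because $0 \leq v_{n+1} \leq (1-\gamma_{n+1}q_n)v_n + \gamma_{n+1}p_n \|\nabla f(x_n,\xi_{n+1})\|^{\odot 2}$ componentwise, whose expectation again grows at most linearly in $F(x_n)$. Putting these together yields
\[
\EE[V_{n+1}\mid \mcF_n] \leq (1 + \beta_n) V_n + \beta_n - (\text{nonneg.\ term}), \qquad \textstyle\sum_n \beta_n < \infty,
\]
with $\beta_n = \mathrm{const}\cdot\gamma_{n+1}^2$ summable by Assumption~\ref{hyp:stability}--iii. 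The Robbins--Siegmund theorem then gives that $V_n$ converges almost surely to a finite random variable, so $\sup_n V_n < \infty$ a.s.; since $F$ is coercive this bounds $(x_n)$, and then the bound on $V_n$ together with $\varepsilon>0$ bounds $(m_n)$, and finally the componentwise recursion for $v_n$ (or directly the Lyapunov term) bounds $(v_n)$.

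The main obstacle I anticipate is the bookkeeping around the momentum variable: $V_n$ must simultaneously (i) be dissipative in $m_n$ along the noiseless flow, which forces the specific $(v_n+\varepsilon)$-weighted quadratic form and the use of the quarter-inequality $\sr \geq \sq/4$, and (ii) interact cleanly with the $v$-update, because $v_{n+1}$ appears inside the weight of the $m_{n+1}$ term while $v$ itself is driven by the noisy $\nabla f(x_n,\xi_{n+1})^{\odot 2}$. One has to show that the change in the weight $(v_{n+1}+\varepsilon)^{-1/2}$ versus $(v_n+\varepsilon)^{-1/2}$ contributes only lower-order terms; this uses $|(v_{n+1}+\varepsilon)^{-1/2} - (v_n+\varepsilon)^{-1/2}| \lesssim \varepsilon^{-3/2}|v_{n+1}-v_n| \lesssim \varepsilon^{-3/2}\gamma_{n+1}(\,|v_n| + \|\nabla f(x_n,\xi_{n+1})\|^2)$, again tamed by the linear-growth hypothesis and the $\gamma_n \to 0$ behaviour. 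A secondary subtlety is that the time-varying coefficients $h_n, r_n, p_n, q_n$ are only eventually monotone and convergent, so all dissipativity statements hold only for $n$ large enough; for small $n$ one absorbs the finitely many extra terms into the constants, which is harmless for an almost-sure boundedness conclusion.
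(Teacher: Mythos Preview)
Your overall strategy matches the paper's: use the discrete energy $V_n = h_{n-1}F(x_n) + \tfrac12\langle m_n^{\odot 2}, (v_n+\varepsilon)^{-1/2}\rangle$ and apply Robbins--Siegmund. You also correctly identify the main obstacle as the interaction between the $m$-quadratic form and the time-varying weight $(v_n+\varepsilon)^{-1/2}$. However, your proposed treatment of this obstacle has a genuine gap.

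The crude Lipschitz bound $|(v_{n+1}+\varepsilon)^{-1/2}-(v_n+\varepsilon)^{-1/2}|\le C\varepsilon^{-3/2}|v_{n+1}-v_n|\le C\varepsilon^{-3/2}\gamma_{n+1}(|v_n|+\|\nabla f(x_n,\xi_{n+1})\|^2)$ does \emph{not} produce a lower-order contribution: once paired with $m_{n+1}^{\odot 2}$ it yields a term of size $\gamma_{n+1}\,v_n\|m_{n+1}\|^2$ (plus a $\gamma_{n+1}\|\nabla f\|^2\|m_{n+1}\|^2$ term), which is of the same order $O(\gamma_{n+1})$ as the dissipative drift and is \emph{not} dominated by $C\gamma_{n+1}^2(1+V_n)$. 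Neither $v_n\|m_n\|^2$ nor $\|m_n\|^2 F(x_n)$ is controlled linearly by $V_n$, so the recursion does not close. The paper instead exploits the precise form of the $v$-update (in particular $v_{n+1}\ge(1-\gamma_{n+1}q_n)v_n$) to obtain a \emph{multiplicative} bound, componentwise,
\[
(v_{n+1}+\varepsilon)^{-1/2}-(v_{n}+\varepsilon)^{-1/2}\;\le\;\Bigl(\tfrac{q_\infty}{2}+\delta\Bigr)\gamma_n\,(v_n+\varepsilon)^{-1/2}
\]
(Eqs.~\eqref{eq:subsubterm2}--\eqref{eq:dif_D_n}), so that the weight change contributes at most $(\tfrac{q_\infty}{2}+\delta)\gamma_n\langle(v_n+\varepsilon)^{-1/2},m_{n+1}^{\odot 2}\rangle$, a term of the same shape as $P_n$ itself. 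This is precisely what allows the $-2r_n\gamma_{n+1}P_n$ coming from the $m$-update to absorb it under the hypothesis $r_\infty>q_\infty/4$; with your crude estimate the role of that hypothesis disappears and nothing closes.

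There is also a smaller gap at the end. Boundedness of $V_n$ gives a bound on $\langle m_n^{\odot 2},(v_n+\varepsilon)^{-1/2}\rangle$, not on $\|m_n\|$; you cannot conclude $(m_n)$ bounded before knowing $(v_n)$ bounded, and $V_n$ contains no term that controls $v_n$ from above. The paper first extracts boundedness of $(x_n)$ from the $F(x_n)$ term and coercivity, and then runs a \emph{separate} argument for $(m_n)$ and $(v_n)$: split $m_n=\bar m_n+\tilde m_n$ into a deterministic part (bounded via a contraction argument using $r_\infty>0$ and $\sup_k\|\nabla F(x_k)\|<\infty$) and a martingale part (bounded via another Robbins--Siegmund step applied to $\|\tilde m_n\|^2$, which is where Assumption~\ref{hyp:moment}-\ref{hyp:moment-q} with $q=2$ is actually used), and likewise for $v_n$. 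Your line ``the componentwise recursion for $v_n$ \dots\ bounds $(v_n)$'' hides exactly this work.
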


\begin{remark}
\label{rmk:stab_larger_stepsizes}
The above stability result requires square summable step sizes. Showing the
same boundedness result under the Assumption~\ref{hyp:moment} that allows for
larger step sizes is a challenging problem in the general case. In these
situations, the boundedness of the iterates can be sometimes ensured by
\emph{ad hoc} means.
\end{remark}

\begin{remark}
\label{rmk:simp_ode_as_conv}
We can also consider the noisy discretization of~\eqref{ode-adagrad}
introduced in Remark~\ref{rmk:simp_ode} above. This algorithm reads
  \begin{subequations}
     \begin{numcases}{}
       v_{n+1} &$= ( 1 - \gamma_{n+1} q_n ) v_n
         + \gamma_{n+1} p_{n} \nabla f(x_n,\xi_{n+1})^{\odot 2}$\\
      x_{n+1} &$= x_n - \gamma_{n+1} {\nabla f(x_n,\xi_{n+1})}/ {\sqrt{v_{n+1} + \varepsilon}}$
       \end{numcases}
  \label{algo-adagrad}
  \end{subequations}

\noindent for $(v_0,x_0) \in \RR_+^d \times \RR^d$. With only minor
adaptations, Th.~\ref{th:as_conv_under_stab} and Th.~\ref{th:stab} can
be shown to hold as well for this algorithm.
We refer to the concomitant paper \cite[Sec.~2.2]{gadat-gavra20} for the link
between this algorithm and the seminal algorithms \textsc{AdaGrad} \cite{duchi2011adaptive}
and \textsc{RMSProp} \cite{tieleman2012lecture}.

\end{remark}

\subsection{Stochastic Nesterov's Accelerated Gradient (\textsc{S-NAG})}
\label{nester-sto}
\textsc{S-NAG} is the noisy Euler's discretization of
\eqref{ode-true-nesterov}. Given $\alpha > 0$, it generates the sequence
$(m_n,x_n)$ on $\RR^d \times \RR^d$ given by Algorithm~\ref{algo-nesterov}.

\begin{algorithm}
   \caption{(S-NAG with decreasing steps)}
   \label{algo-nesterov}
\begin{algorithmic}
   \STATE {\bfseries Initialization:} $m_0 = 0, x_0 \in \RR^d$.
   \FOR{$n=1$ {\bfseries to} $n_{\text{iter}}$}
   \STATE $m_{n+1} = ( 1 - \alpha\gamma_{n+1} /\tau_n ) m_n
       + \gamma_{n+1} \nabla f(x_n,\xi_{n+1})$
   \STATE $x_{n+1} = x_n - \gamma_{n+1} {m_{n+1}}$ \,.
   \ENDFOR
\end{algorithmic}
\end{algorithm}

  \begin{assumption}
    \label{hyp:moment-nes}
    Assume either of the following conditions.
    \begin{enumerate}[i)]
    \item \label{hyp:moment-nes-q}
    There exists $q \geq 2$ s.t. for every compact set $\mK \subset \RR^d$,
    \[
    \sup_{x\in \mK} \EE_\xi \| \nabla f(x,\xi) \|^{q} < \infty
    \quad \text{and} \quad
    \sum_n \gamma_n^{1+q/2} < \infty\,.
    \]
    \item \label{hyp:subgauss-noise-nes}
    For every compact set $\mK \subset \RR^d$, there exists a real $\sigma_\mK \neq 0$ s.t.
    \begin{align*}
    \EE_\xi \exp \ps{u, \nabla f(x,\xi) - \nabla F(x)} \1_{x\in\mK}
     &\leq \exp\left( \sigma_\mK^2 \|u\|^2 / 2\right)\,,
    \end{align*}
    for every $x, u \in \RR^d$.
    Moreover, for every $\alpha > 0$,
    $
    \sum_n \exp(-\alpha / \gamma_n) < \infty\,.
    $
    \end{enumerate}
  \end{assumption}

  \begin{theorem}
  \label{th:as_conv_under_stab_nesterov}
  Let Assumptions~\ref{hyp:F_coerc}, \ref{hyp:model}, \ref{hyp:stepsizes}, \ref{hyp:sard} and~\ref{hyp:moment-nes} hold true. Assume that the random sequence
  $(y_n = (m_n,x_n):n\in \NN)$ given by Algorithm~\ref{algo-nesterov}
  is bounded with probability one. Then, w.p.1, the sequence $(y_n)$ converges
  towards the set~$\bar\Upsilon$ defined in Eq.~\eqref{eq:bar_Upsilon}.  If, in
  addition, the set of critical points of the objective function $F$ is finite or
  countable, then w.p.1, the sequence $(y_n)$ converges to a single point of
  $\bar\Upsilon$.
  \end{theorem}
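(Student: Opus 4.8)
The plan is to treat Algorithm~\ref{algo-nesterov} as a noisy Euler discretization of \eqref{ode-true-nesterov} and invoke the classical ODE method of stochastic approximation, relying on Theorem~\ref{th:ode-nesterov} for the limiting dynamics. First I would rewrite the recursion in the standard stochastic-approximation form. Setting $y_n = (m_n, x_n)$, the update reads $y_{n+1} = y_n + \gamma_{n+1}\big( \bar g(y_n, \tau_n) + \eta_{n+1} + \rho_{n+1}\big)$, where $\bar g((m,x),t) = (\nabla F(x) - (\alpha/t)m, -m)$ is the (non-autonomous) vector field of \eqref{ode-true-nesterov}, $\eta_{n+1} = (\nabla F(x_n) - \nabla f(x_n,\xi_{n+1}), 0)$ is a martingale-increment noise term adapted to the natural filtration $\mathcal{F}_n = \sigma(\xi_1,\dots,\xi_n)$, and $\rho_{n+1}$ collects the $O(\gamma_{n+1})$ discrepancy coming from the fact that $x_{n+1}$ (not $x_n$) appears implicitly through $m_{n+1}$ in the second line — this is a deterministic perturbation of order $\gamma_{n+1}\|m_n\|$ plus a noise cross term, all negligible on the boundedness event. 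A minor point: $\tau_n$ is the continuous time at step $n$, so $\tau_n \to \infty$ and $\alpha/\tau_n \to 0$, which is exactly the regime where Theorem~\ref{th:ode-nesterov}'s autonomous limit $(\dot m = \nabla F(x), \dot x = -m)$ would be too weak — but Theorem~\ref{th:ode-nesterov} itself already handles the genuine non-autonomous ODE on all of $\RR_+$, so I would set up the interpolated process to shadow \eqref{ode-true-nesterov} directly rather than its autonomous counterpart.

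Next I would construct the piecewise-affine interpolated process $\mathsf{y}(t)$ with $\mathsf{y}(\tau_n) = y_n$ and verify it is an asymptotic pseudotrajectory (APT) of the flow induced by \eqref{ode-true-nesterov}. On the almost-sure event $\{(y_n)\text{ bounded}\}$, the iterates stay in some compact $\mathcal K$; there $\nabla F$ is Lipschitz (by Assumption~\ref{hyp:F_loclip}/\ref{hyp:model}) and bounded, so $\bar g$ is locally Lipschitz in $y$ uniformly in $t$ on $[1,\infty)$. Under Assumption~\ref{hyp:stepsizes} ($\gamma_{n+1}/\gamma_n \to 1$, $\sum \gamma_n = \infty$) the only remaining ingredient for the APT property is that the weighted noise sums $\sum_{k} \gamma_{k+1}\eta_{k+1}$ converge (or have vanishing oscillation over windows of bounded length). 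This is where Assumption~\ref{hyp:moment-nes} enters: under case~\ref{hyp:moment-nes-q}, the bound $\sup_{\mathcal K}\EE\|\nabla f\|^q < \infty$ with $\sum \gamma_n^{1+q/2} < \infty$ gives, via Burkholder–Davis–Gundy and Doob, almost-sure convergence of the martingale $\sum \gamma_{k+1}\eta_{k+1}\1_{x_k \in \mathcal K}$ (a standard truncation argument removes the indicator on the boundedness event); under case~\ref{hyp:subgauss-noise-nes}, the subgaussian tails together with $\sum \exp(-\alpha/\gamma_n) < \infty$ give the same conclusion through an exponential (Azuma-type) estimate. Since this is exactly the machinery used to prove Theorems~\ref{th:as_conv_under_stab} and~\ref{th:stab}, I expect the bulk of this step to be a citation to \cite{ben-(cours)99} plus the noise-control lemmas already developed for Algorithm~\ref{algosto}.

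With the APT property in hand, the limit set of $(\mathsf{y}(t))$ is a connected, internally chain-transitive (ICT) set for the flow of \eqref{ode-true-nesterov}. By Theorem~\ref{th:ode-nesterov}, every solution of \eqref{ode-true-nesterov} converges to $\bar\Upsilon = \{(0,x_\star) : x_\star \in \zer\nabla F\}$; in particular $\bar\Upsilon$ is the global attractor and it contains every ICT set, so $(y_n)$ converges to $\bar\Upsilon$. To upgrade "convergence to the set" to "convergence to a point" when $\zer\nabla F$ is at most countable, I would use Assumption~\ref{hyp:sard} together with a Lyapunov argument: along \eqref{ode-true-nesterov} the energy $E(m,x) = \tfrac12\|m\|^2 + F(x)$ satisfies $\dot E = -(\alpha/t)\|m\|^2 \le 0$, so $F$ is (asymptotically) a Lyapunov function, $F(x_n)$ converges, and by Assumption~\ref{hyp:sard} the limit value is attained on a set with empty interior; combined with the limit set being connected and contained in the countable set $\bar\Upsilon$, it must be a singleton. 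The main obstacle I anticipate is not any single estimate but the bookkeeping of the perturbation $\rho_{n+1}$ arising from the implicit appearance of $m_{n+1}$ in the $x$-update: one must check it is genuinely $o(1)$-averaged (it is, since $\|\rho_{n+1}\| \lesssim \gamma_{n+1}(\|m_n\| + \|\nabla f(x_n,\xi_{n+1})\|)$ and the iterates are bounded), so that it does not perturb the APT property — everything else follows the well-trodden path already laid out for Algorithm~\ref{algosto}.
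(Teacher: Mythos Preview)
Your setup (the SA decomposition, the noise control under Assumption~\ref{hyp:moment-nes}, the bookkeeping of $\rho_{n+1}$) is correct and parallels the paper. The gap is in the passage from the APT property to convergence to $\bar\Upsilon$. You correctly flag that the autonomous limit $(\dot m = \nabla F(x),\ \dot x = -m)$ is too weak, and propose to ``shadow \eqref{ode-true-nesterov} directly.'' But the APT/ICT machinery of \cite{ben-(cours)99} is formulated for \emph{autonomous} semiflows; to invoke it one must autonomize, which the paper does by adjoining $\sss(t)=1/t$ and working with the semiflow $\Phi_N$ of~\eqref{eq:odeNest_aut}. The limit set of the interpolated process then sits in the slice $\{\sss=0\}$, where the dynamics are exactly the oscillatory system you were trying to avoid. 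That system has periodic orbits (take $F(x)=x^2/2$), so $\bar\Upsilon\times\{0\}$ is \emph{not} its global attractor and the claim ``every ICT set lies in $\bar\Upsilon$'' fails. Theorem~\ref{th:ode-nesterov} concerns solutions of \eqref{ode-true-nesterov} on $\RR_+$; it does not characterize the ICT sets of $\Phi_N$ in the $\sss=0$ slice, which is what your argument actually needs.

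The paper's remedy is the time change of Lemma~\ref{lm:ch_var}, which maps solutions of \eqref{ode-true-nesterov} to solutions of the Heavy-Ball ODE~\eqref{eq:odeGad}; the autonomized limit of the latter at $\sss=0$ is the affine system $(\dot\sy=0,\ \dot\su=-\sy)$. For any limit point $(m,x)$ of $(y_n)$ one extracts, via Lemma~\ref{lm:rel_comp} and the APT property transported through the time change (Eqs.~\eqref{eq:APT_Nest}--\eqref{eq:Apt_Gad}), a solution $\su(t)=A+Bt$ of this affine system; boundedness of the iterates forces $B=0$, so $\su$ is constant, and translating back gives $\sm\equiv 0$ and $\nabla F(\sx)\equiv 0$, hence $(m,x)\in\bar\Upsilon$. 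This direct limit-point argument replaces the attractor/ICT step, which cannot be made to work here.
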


 The almost sure boundedness of the sequence $(y_n)$ is handled
 in what follows.

\begin{theorem} 
\label{th:stab_nesterov}
Let Assumptions~\ref{hyp:F_coerc}, \ref{hyp:model}, \ref{hyp:stepsizes} and \ref{hyp:stability} hold.
Then, the sequence $(y_n = (m_n,x_n):n\in \NN)$ given by Algorithm~\ref{algo-nesterov} is bounded with probability one.
\end{theorem}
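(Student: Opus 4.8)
The plan is to transpose to the iterates the energy identity behind the ODE~\eqref{ode-true-nesterov}: there, $t\mapsto F(\sx(t))+\frac12\|\sm(t)\|^2$ is nonincreasing because the two cross terms produced by $\ps{\nabla F(\sx),\dot\sx}$ and $\ps{\sm,\dot\sm}$ cancel, leaving only the friction contribution $-(\alpha/t)\|\sm\|^2\le 0$. I would show that the discrete analogue $V_n\eqdef F(x_n)+\frac12\|m_n\|^2$, evaluated along Algorithm~\ref{algo-nesterov}, behaves like a nonnegative supermartingale up to a multiplicative error of size $O(\gamma_{n+1}^2)$, and then conclude via the Robbins--Siegmund theorem together with the coercivity of $F$. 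Throughout, let $\mcF_n=\sigma(\xi_1,\dots,\xi_n)$, so that $x_n,m_n$ are $\mcF_n$-measurable and $\EE[\nabla f(x_n,\xi_{n+1})\mid\mcF_n]=\nabla F(x_n)$, and write $\beta_n\eqdef\alpha\gamma_{n+1}/\tau_n$.

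First I would record a handful of elementary facts. Since $\nabla F$ is globally $L$-Lipschitz (Assumption~\ref{hyp:stability}) and $F\ge F_\star$ (by coercivity), the descent lemma gives $F(x_{n+1})\le F(x_n)+\ps{\nabla F(x_n),x_{n+1}-x_n}+\frac L2\|x_{n+1}-x_n\|^2$, and evaluating it at $x-L^{-1}\nabla F(x)$ also yields $\|\nabla F(x)\|^2\le 2L(F(x)-F_\star)$ for every $x$. Assumption~\ref{hyp:stability}\,ii) gives $\EE[\|\nabla f(x_n,\xi_{n+1})\|^2\mid\mcF_n]\le C(1+F(x_n))$. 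Finally $\sum_n\gamma_n^2<\infty$ forces $\gamma_n\to 0$, $\sum_n\gamma_n=+\infty$ forces $\tau_n\to+\infty$, and $\tau_n\ge\tau_1=\gamma_1$ gives $\beta_n\le(\alpha/\gamma_1)\gamma_{n+1}$, so $\gamma_{n+1}\beta_n$ and $\beta_n^2$ are $O(\gamma_{n+1}^2)$. Note also $\|m_n\|^2\le 2(V_n-F_\star)$ and, by the above, $\|\nabla F(x_n)\|^2\le 2L(V_n-F_\star)$.

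Next I would expand $V_{n+1}-V_n=[F(x_{n+1})-F(x_n)]+\frac12[\|m_{n+1}\|^2-\|m_n\|^2]$. Substituting $x_{n+1}-x_n=-\gamma_{n+1}m_{n+1}$ into the descent lemma, using $\frac12\|m_{n+1}\|^2-\frac12\|m_n\|^2=\ps{m_n,m_{n+1}-m_n}+\frac12\|m_{n+1}-m_n\|^2$ with $m_{n+1}-m_n=-\beta_n m_n+\gamma_{n+1}\nabla f(x_n,\xi_{n+1})$, and bounding $\|m_{n+1}\|^2\le 2\|m_n\|^2+2\|m_{n+1}-m_n\|^2$, I would take $\EE[\,\cdot\mid\mcF_n]$. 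The terms linear in the noise $\nabla f(x_n,\xi_{n+1})-\nabla F(x_n)$ vanish, and --- this is the crux --- the two $O(\gamma_{n+1})$ cross terms $\mp\,\gamma_{n+1}\ps{\nabla F(x_n),m_n}$ cancel, exactly as in the ODE. Dropping the nonpositive term $-\gamma_{n+1}^2\|\nabla F(x_n)\|^2$, what remains is
\[
\EE[V_{n+1}\mid\mcF_n]-V_n\;\le\;\gamma_{n+1}\beta_n\ps{\nabla F(x_n),m_n}-\beta_n\|m_n\|^2+L\gamma_{n+1}^2\|m_n\|^2+\bigl(\tfrac12+L\gamma_{n+1}^2\bigr)\EE\bigl[\|m_{n+1}-m_n\|^2\mid\mcF_n\bigr].
\]
Since $\EE[\|m_{n+1}-m_n\|^2\mid\mcF_n]\le 2\beta_n^2\|m_n\|^2+2C\gamma_{n+1}^2(1+F(x_n))$, discarding the nonpositive term $-\beta_n\|m_n\|^2$ and plugging in the elementary bounds above yields, for every $n\ge 1$,
\[
\EE[V_{n+1}\mid\mcF_n]\;\le\;V_n+c\,\gamma_{n+1}^2\bigl(1+V_n-F_\star\bigr)
\]
for a deterministic constant $c$ depending only on $L,C,\alpha,\gamma_1,F_\star$. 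Setting $W_n\eqdef 1+V_n-F_\star\ge 1$ turns this into $\EE[W_{n+1}\mid\mcF_n]\le(1+c\gamma_{n+1}^2)W_n$; since $\sum_n\gamma_n^2<\infty$ the partial products $\prod_{k\le n}(1+c\gamma_k^2)$ converge to a finite limit, so $W_n$ divided by these products is a nonnegative supermartingale and hence converges a.s.\ to a finite limit. Therefore $\sup_n W_n<\infty$ a.s., so $\sup_n F(x_n)<\infty$ and $\sup_n\|m_n\|^2\le 2\sup_n(V_n-F_\star)<\infty$ a.s., and by coercivity of $F$ (Assumption~\ref{hyp:F_coerc}) its sublevel sets are bounded, whence $\sup_n\|x_n\|<\infty$ a.s. This proves that $(y_n)=(m_n,x_n)$ is bounded with probability one.

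The only delicate point --- the one worth getting exactly right --- is designing the Lyapunov function so that all $O(\gamma_{n+1})$-size contributions cancel, leaving a clean $O(\gamma_{n+1}^2)$ perturbation. This rests on the cross-term cancellation above, on the fact that the vanishing damping $\beta_n=\alpha\gamma_{n+1}/\tau_n$ enters only through $O(\gamma_{n+1}^2)$ quantities (for which $\tau_n\ge\gamma_1$ suffices), and on controlling the second moment of the stochastic gradient by $1+F(x_n)$ --- precisely the role of Assumption~\ref{hyp:stability}\,ii). Everything else is routine, and note that no separate treatment of the first few indices is needed since the recursion above holds for all $n\ge1$.
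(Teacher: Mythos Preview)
Your proof is correct and follows exactly the approach indicated in the paper: use the Lyapunov function $\tilde V_n=F(x_n)+\frac12\|m_n\|^2$ and apply the Robbins--Siegmund theorem, exploiting the cancellation of the $O(\gamma_{n+1})$ cross terms $\mp\,\gamma_{n+1}\ps{\nabla F(x_n),m_n}$. The paper's own proof is a one-line reference to adapting the argument of Th.~\ref{th:stab}; you have supplied the details of that adaptation faithfully, including the use of $\beta_n=\alpha\gamma_{n+1}/\tau_n=O(\gamma_{n+1})$ and the control $\EE[\|\nabla f(x_n,\xi_{n+1})\|^2\mid\mcF_n]\le C(1+F(x_n))$.
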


\begin{remark}
Assumption~\ref{hyp:moment}-\ref{hyp:moment-q} in Th.~\ref{th:stab} is not needed for Th.~\ref{th:stab_nesterov}.
\end{remark}

\subsection{Central Limit Theorem}

In this section, we establish a conditional central limit theorem for Algorithm~\ref{algosto}.
\begin{assumption}
  \label{hyp:tcl_regularity_moments}
  Let $x_{\star}\in \zer \nabla F$. The following holds.
  \begin{enumerate}[i)]
    \setlength{\itemsep}{0pt}
  \item $F$ is twice continuously differentiable on a neighborhood of $x_{\star}$ and the Hessian $\nabla^2 F(x_{\star})$ is positive definite.
  \item $S$ is continuously differentiable on a neighborhood of $x_{\star}$.
\item There exists $M>0$ and  $b_M>4$ s.t.
\begin{equation}
\sup_{x\in B(x_{\star},M)}\EE_\xi[\|\nabla f(x,\xi)\|^{b_M}]<\infty\,.\label{eq:momentsTCL}
\end{equation}
  \end{enumerate}
\end{assumption}

Under Assumptions~\ref{hyp:hrpq}-\ref{hyp:h}~to~\ref{hyp:p}, it follows from Eq.~\eqref{eq:hrpq_seq} that the sequences $(h_n), (r_n), (p_n)$ and $(q_n)$ of nonnegative reals converge respectively to $h_\infty, r_\infty, p_\infty$ and $q_\infty$ where $h_\infty$, $r_\infty$ and $q_\infty$ are supposed positive.
Define $v_{\star}\eqdef q_\infty^{-1}p_\infty S(x_{\star})$.
Consider the matrix
\begin{equation}
  \label{eq:V_matrix}
V \eqdef \textrm{diag}\left((\varepsilon + v_\star)^{\odot -\frac 12}\right)\,.
\end{equation}
Let $P$ be an orthogonal matrix s.t. the following spectral decomposition holds:
$$
V^{\frac 12}\nabla^2F(x_{\star})V^{\frac 12} = P\textrm{diag}(\pi_1,\cdots,\pi_d)P^{-1}\,,
$$
where $\pi_1 \leq \cdots \leq \pi_d$ are the (positive) eigenvalues of
$V^{\frac 12} \nabla^2F(x_{\star})V^{\frac 12}$.
Define
$$
{\mathcal H} \eqdef
\begin{bmatrix}
  -r_\infty I_d & h_\infty \nabla^2 F(x_{\star}) \\
  - V & 0
\end{bmatrix}
$$
where $I_d$ is the $d \times d$ identity matrix. Then the matrix $\mathcal H$ is Hurwitz. Indeed, it can be shown that the largest real part of the eigenvalues of $\mathcal H$ coincides with $-L$, where
\begin{equation}
\label{eq:L}
L\eqdef \frac{r_\infty}{2}\left( 1-\sqrt{\left(1-\frac{4h_\infty\pi_1}{r_\infty^2}\right)\vee 0}\right) >0\,.
\end{equation}

\begin{assumption}
  \label{hyp:tcl_stepsizes}
  The sequence $(\gamma_n)$ is given by $\gamma_n = \frac{\gamma_0}{n^\alpha}$ for some $\alpha\in (0,1]$, $\gamma_0>0$.
Moreover, if $\alpha=1$, we assume that $\gamma_0> \frac{1}{2(L\wedge q_\infty)}$.
\end{assumption}

\begin{theorem}
  \label{th:clt}
  Let Assumptions~\ref{hyp:hrpq}-\ref{hyp:h} to \ref{hyp:p}, \ref{hyp:model_bis}, \ref{hyp:tcl_regularity_moments} and \ref{hyp:tcl_stepsizes} hold.
  Consider the iterates
  $z_n = (v_n, m_n, x_n)$ given by Algorithm~\ref{algosto}.
   Set $\theta \eqdef 0$ if $\alpha<1$ and $\theta \eqdef 1/(2\gamma_0)$ if $\alpha=1$.
Assume that the event $\{z_n \to z_\star\}$, where $z_\star  = (v_{\star}, 0, x_{\star})$, has a positive probability. Then, given that event,
$$
\frac{1}{\sqrt{\gamma_n}}
\begin{bmatrix}
  m_n \\ x_n-x_{\star}
\end{bmatrix}
\Rightarrow {\mathcal N}(0,\Gamma)\,,
$$
where $\Rightarrow$ stands for the convergence in distribution and ${\mathcal N}(0,\Gamma)$ is a centered Gaussian distribution on $\bR^{2d}$ with a covariance matrix $\Gamma$ given by the unique solution to the Lyapunov equation
$$
({\mathcal H}+\theta I_{2d})\Gamma + \Gamma({\mathcal H}+\theta I_{2d})^\T = -
\begin{bmatrix}
  \mathrm{Cov}(h_\infty \nabla f(x_{\star},\xi)) & 0 \\ 0 & 0
\end{bmatrix}
\,.
$$
In particular, given $\{z_n\to z_{\star}\}$, the vector $\sqrt{\gamma_n}^{-1}(x_n-x_{\star})$
converges in distribution to a centered Gaussian distribution with a covariance matrix given~by:
\begin{equation}
\Gamma_2 = V^{\frac 12} P
\left[
\frac{C_{k,\ell}}
{ \frac{r_\infty -2\theta}{h_\infty} (\pi_k+\pi_\ell
+ \frac{2 \theta (\theta - r_\infty)}{h_\infty})
+\frac{(\pi_k-\pi_\ell)^2}{2(r_\infty - 2 \theta)}}
\right]_{k,\ell=1\dots d}
P^{-1}V^{\frac 12}\label{eq:cov}
\end{equation}
where $C\eqdef P^{-1}V^{\frac 12}\EE_\xi\left[\nabla f(x_{\star},\xi)\nabla f(x_{\star},\xi)^\T\right]V^{\frac 12}P$.
\end{theorem}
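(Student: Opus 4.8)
The plan is to regard Algorithm~\ref{algosto} as a non-autonomous stochastic approximation recursion on $\cZ_+\subset\RR^{3d}$ with decreasing gain $\gamma_n=\gamma_0 n^{-\alpha}$, to linearize it around the equilibrium $z_\star=(v_\star,0,x_\star)$, and, conditionally on the event $\mathcal E\eqdef\{z_n\to z_\star\}$ (assumed to have positive probability), to invoke a central limit theorem for stochastic approximation with decreasing step sizes. Write $\mcF_n\eqdef\sigma(\xi_1,\dots,\xi_n)$. The first step is a localization: fix $\rho>0$ small enough that $B(x_\star,\rho)$ lies inside the neighborhood of $x_\star$ on which Assumption~\ref{hyp:tcl_regularity_moments} holds; modify the coefficients of the recursion outside $B(z_\star,\rho)$ so that the modified recursion has globally Lipschitz, bounded drift and still converges almost surely to $z_\star$; and observe that on $\mathcal E$ the original and modified iterates coincide from some random rank on. It therefore suffices to prove the stated convergence in distribution for the modified recursion, unconditionally; we keep the notation $z_n=(v_n,m_n,x_n)$ for it.

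The mean field near $z_\star$ is $\bar g(z)=\bigl(p_\infty S(x)-q_\infty v,\ h_\infty\nabla F(x)-r_\infty m,\ -m/\sqrt{v+\varepsilon}\bigr)$, whose only zero there is $z_\star$ and whose Jacobian at $z_\star$ is
$$
J=\begin{bmatrix}-q_\infty I_d & 0 & p_\infty\nabla S(x_\star)\\ 0 & -r_\infty I_d & h_\infty\nabla^2F(x_\star)\\ 0 & -V & 0\end{bmatrix},
$$
with $V$ as in~\eqref{eq:V_matrix}. This $J$ is block upper-triangular for the splitting $v\oplus(m,x)$, so $\mathrm{sp}(J)=\{-q_\infty\}\cup\mathrm{sp}(\mathcal H)$; since $\mathrm{sp}(\mathcal H)\subset\{\Re\le -L\}$ with $L>0$ (see~\eqref{eq:L}), the shifted matrix $J+\theta I_{3d}$ is Hurwitz if and only if $\theta<L\wedge q_\infty$. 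This holds trivially when $\alpha<1$ (then $\theta=0$) and is exactly the requirement $\gamma_0>1/(2(L\wedge q_\infty))$ when $\alpha=1$, i.e.\ Assumption~\ref{hyp:tcl_stepsizes}; here is where the factor $q_\infty$ enters.

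Writing $w_n\eqdef z_n-z_\star$, the convergences $h_n\to h_\infty$, $r_n\to r_\infty$, $p_n\to p_\infty$, $q_n\to q_\infty$ (from~\eqref{eq:hrpq_seq} and Assumption~\ref{hyp:hrpq}) together with the $\mathcal C^2$ regularity of $F$ and $\mathcal C^1$ regularity of $S$ near $x_\star$ yield
$$
w_{n+1}=w_n+\gamma_{n+1}\bigl(Jw_n+\eta_{n+1}\bigr)+\gamma_{n+1}\Delta M_{n+1},
$$
where $\Delta M_{n+1}$ is an $\mcF_{n+1}$-martingale increment whose principal part is $\bigl(p_\infty(\nabla f(x_\star,\xi_{n+1})^{\odot 2}-S(x_\star)),\ h_\infty(\nabla f(x_\star,\xi_{n+1})-\nabla F(x_\star)),\ 0\bigr)$, and $\eta_{n+1}$ collects the coefficient errors $h_n-h_\infty$, $r_n-r_\infty$, $p_n-p_\infty$, $q_n-q_\infty$ times bounded quantities, the second order Taylor remainders of $\nabla F$ and $S$ (of size $O(\|w_n\|^2)$), and the coupling term $-m_{n+1}\odot\bigl((v_{n+1}+\varepsilon)^{\odot -1/2}-(v_\star+\varepsilon)^{\odot -1/2}\bigr)$ in the $x$-block (of size $O(\|w_{n+1}\|^2)$). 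Using a preliminary almost sure rate $\|w_n\|^2=o(\gamma_n)$ — obtained from a Robbins--Siegmund argument exploiting the Hurwitz-after-shift property of $J$ — one checks that $\eta_{n+1}$ is negligible in the sense required by the CLT, that $\EE[\Delta M_{n+1}\Delta M_{n+1}^\T\mid\mcF_n]$ converges to
$$
\Sigma=\begin{bmatrix}\mathrm{Cov}(p_\infty\nabla f(x_\star,\xi)^{\odot 2}) & * & 0\\ * & \mathrm{Cov}(h_\infty\nabla f(x_\star,\xi)) & 0\\ 0 & 0 & 0\end{bmatrix},
$$
and that the Lindeberg condition holds thanks to $b_M>4\ge 2$ in~\eqref{eq:momentsTCL}. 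The stochastic approximation CLT then gives $\gamma_n^{-1/2}w_n\Rightarrow\mathcal N(0,\widetilde\Gamma)$ with $(J+\theta I_{3d})\widetilde\Gamma+\widetilde\Gamma(J+\theta I_{3d})^\T=-\Sigma$. Since the lower-left block of $J$ (the dependence of $(\dot m,\dot x)$ on $v$ at $z_\star$) vanishes, the $(m,x)$-block of this Lyapunov equation decouples and reads $(\mathcal H+\theta I_{2d})\Gamma+\Gamma(\mathcal H+\theta I_{2d})^\T=-\begin{bmatrix}\mathrm{Cov}(h_\infty\nabla f(x_\star,\xi)) & 0\\ 0 & 0\end{bmatrix}$, which is the asserted equation; its $x$--$x$ sub-block is $\Gamma_2$.

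Finally, to reach the closed form~\eqref{eq:cov}, I would diagonalize: with $T\eqdef\textrm{diag}(V^{-1/2},V^{1/2})$ one gets $T^{-1}\mathcal H T=\begin{bmatrix}-r_\infty I_d & h_\infty A\\ -I_d & 0\end{bmatrix}$, $A\eqdef V^{1/2}\nabla^2F(x_\star)V^{1/2}=P\,\textrm{diag}(\pi_1,\dots,\pi_d)P^{-1}$; a further conjugation by $\textrm{diag}(P,P)$ and a coordinate permutation turn $\mathcal H+\theta I_{2d}$ into a block-diagonal matrix with $d$ planar blocks $\begin{bmatrix}\theta-r_\infty & h_\infty\pi_k\\ -1 & \theta\end{bmatrix}$, the transformed noise covariance being $h_\infty^2C$ (with $C$ as in the statement) and supported in the momentum coordinates only. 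The Lyapunov equation then splits into $d^2$ small Sylvester systems indexed by pairs $(k,\ell)$, whose solution's position--position entry equals $C_{k,\ell}$ divided by the displayed denominator; transporting back through $x-x_\star=V^{1/2}P\chi$ yields~\eqref{eq:cov}. This last step is purely computational. The main obstacle I anticipate is in the third paragraph: controlling the coupling of the fast $v$-subsystem with the $(m,x)$-subsystem at the $\sqrt{\gamma_n}$ scale, together with the bookkeeping of the non-autonomous coefficient errors, which is precisely where the preliminary almost sure rate and, for $\alpha=1$, the $q_\infty$-dependent step size condition are needed.
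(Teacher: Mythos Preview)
Your high-level strategy matches the paper's: localize, linearize at $z_\star$, invoke a CLT for stochastic approximation (the paper uses a version of Pelletier's theorem), then extract the $(m,x)$-block via the block upper-triangularity of the $3d$-Jacobian. Your observation that this triangularity makes the $(2,2)$-block of the $3d$ Lyapunov equation decouple exactly into the stated $2d$ equation is correct and is implicitly what the paper does. However, the paper handles three technical points that you leave underspecified. First, because the $x$-update depends on $(m_{n+1},v_{n+1})$ rather than $(m_n,v_n)$, the paper works with the shifted vector $\zeta_n=(\bar v_n,m_n,x_{n-1})$ and recovers $x_n$ at the end via Slutsky; your ``coupling term'' in the $x$-block is exactly this implicit dependence, and the shift is the clean way to dispatch it. Second, the non-autonomous coefficients $p_n,q_n$ are not just lumped into a remainder: the paper splits $v_n=\bar v_n+\delta_n$ with $\delta_n$ absorbing the drift $p_n-q_nq_\infty^{-1}p_\infty$, and shows $\delta_n\to 0$ deterministically on the event; this is what lets the remainder $r_n$ be estimated as in your sketch. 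Third, the ``preliminary rate $\|w_n\|^2=o(\gamma_n)$ via Robbins--Siegmund'' is not how the paper proceeds, and Robbins--Siegmund alone does not obviously give it; the paper proves the $L^2$ tightness $\sup_n\gamma_n^{-1}\EE\|\tilde\zeta_n-z_\star\|^2<\infty$ directly, by passing to a Jordan-like basis $S_n=B_t(\tilde\zeta_n-z_\star)$ where $\|I+\gamma G_n^{(t)}\|_2\le 1-\gamma A'$ and unrolling the recursion. For the closed form, your block-diagonalization (conjugate by $\mathrm{diag}(V^{-1/2},V^{1/2})$, then by $\mathrm{diag}(P,P)$, then reorder into $d$ planar $2\times 2$ blocks) is a legitimate alternative to the paper's route, which instead triangularizes $\tilde H$ via a matrix $\mathcal R$ built from the roots $\nu_k^\pm$ and solves three nested Sylvester equations; both lead to~\eqref{eq:cov}.
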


A few remarks are in order.
\begin{itemize}[leftmargin=*]

\item The matrix $\Gamma_2$ coincides with the limiting covariance matrix associated to the iterates
\begin{equation*}
     \begin{cases}
  m_{n+1} &= m_n + \gamma_{n+1}(h_\infty V\nabla f(x_n,\xi_{n+1})- r_\infty m_n)\\
  x_{n+1} &= x_n - \gamma_{n+1} m_{n+1}   \,.
     \end{cases}
\end{equation*}
 This procedure can be seen as a preconditioned version of the stochastic heavy ball algorithm~\cite{gad-pan-saa18} although the iterates are not implementable because of the unknown matrix $V$. Notice also that the limiting covariance $\Gamma_2$ depends on~$v_\star$ but does not depend on the fluctuations of the sequence $(v_n)$.

\item When $h_\infty = r_\infty$ (which is the case for \textsc{Adam}), we recover the expression of the asymptotic covariance matrix previously provided in \cite[Section~5.3]{barakat-bianchi21} and the remarks formulated therein.

\item The assumption $r_\infty > 0$ is crucial to establish Th.~\ref{th:clt}. For this reason, Th.~\ref{th:clt} does not generalize immediately to Algorithm~\ref{algo-nesterov}.
  The study of the fluctuations of Algorithm~\ref{algo-nesterov} is left for future works.
\end{itemize}

\subsection{Related works}
\label{related-1storder-clt}

In \cite{gad-pan-saa18}, Gadat, Panloup and Saadane study the \textsc{SHB}
algorithm, which is a noisy Euler's discretization of \eqref{ode-generale} in
the situation where $\sh = \sr$ and $\spp = \sq \equiv 0$ (\emph{i.e.}, there
is no $\sv$ variable).  In this framework, if we set $\sh = \sr \equiv r > 0$
in Algorithm~\ref{algosto} above, then Th.~\ref{th:as_conv_under_stab} above
recovers the analogous case in \cite[Th.~2.1]{gad-pan-saa18}, which is termed
as the exponential memory case.  The other important case treated
in~\cite{gad-pan-saa18} is the case where $\sh(t) = \sr(t) = r / t$ for some $r
> 0$, referred to as the polynomially memory case.  Actually, it is known that
the ODE obtained for $\sh(t) = \sr(t) = r / t$ and $\spp = \sq \equiv 0$ boils
down to~\eqref{ode-true-nesterov} after a time variable change (see,
\emph{e.g.}, Lem.~\ref{lm:ch_var} below).
Nevertheless, we highlight that the stochastic
algorithm that stems from this ODE and that is studied in~\cite{gad-pan-saa18}
is different from the \textsc{S-NAG} algorithm introduced above which stems
from a different ODE \eqref{ode-true-nesterov}.
Hence, the convergence result of Th.~\ref{th:as_conv_under_stab_nesterov}
for the \textsc{S-NAG} algorithm we consider is not
covered by the analysis of \cite{gad-pan-saa18}.

The specific case of the \textsc{Adam} algorithm is analyzed in
\cite{barakat-bianchi21} in both the constant and vanishing stepsize settings
(see \cite[Ths.~5.2-5.4]{barakat-bianchi21} which are the analogues of our
Ths.~\ref{th:as_conv_under_stab}-\ref{th:stab}).
Note that we deal with a more general algorithm in the present paper.
Indeed, Algorithm~1 offers some freedom
in the choice of the functions $h,r,p,q$ satisfying Assumption~2.4 beyond
the specific case of the \textsc{Adam} algorithm studied in \cite{barakat-bianchi21}.
Apart from this generalization, we also emphasize some small improvements.
Regarding Theorem~3.1, we provide noise conditions allowing to
choose larger stepsizes (see Assumption~3.4 compared to \cite[Assumption~4.2]{barakat-bianchi21}).
Concerning the stability result (Th.\ref{th:stab}), we relax \cite[Assumption 5.3-(iii)]{barakat-bianchi21}
which is no more needed in the present paper (see Assumption~\ref{hyp:stability}) thanks
to a modification of the discretized Lyapunov function used in the proof
(see Section~6.4 compared to \cite[Section~9.2]{barakat-bianchi21}).

In most generality, the almost sure convergence result of the iterates of
Algorithm~\ref{algosto} using vanishing stepsizes
(Ths.~\ref{th:as_conv_under_stab}-\ref{th:stab}) is new to the best of our knowledge.
Moreover, while some recent results exist for \textsc{S-NAG} in the constant
stepsize and for convex objective functions (see for e.g.  \cite{ass-rabb20}),
Ths.~\ref{th:as_conv_under_stab_nesterov} and~\ref{th:stab_nesterov} which
tackle the possibly non-convex setting are also new to the best of our
knowledge.

In the work \cite{gadat-gavra20} that was posted on the arXiv repository a few days
after our submission, Gadat and Gavra study the specific case of
the algorithm described in Eq.~\eqref{algo-adagrad} encompassing both \textsc{Adagrad} and \textsc{RMSProp},
with the possibility to use mini-batches. For this specific algorithm, the authors establish a similar almost sure convergence result to ours \cite[Th.~1]{gadat-gavra20} for decreasing stepsizes
and derive some quantitative results bounding in expectation the gradient of the objective function
along the iterations for constant stepsizes \cite[Th.~2]{gadat-gavra20}.
We highlight though that they do not consider the presence of momentum in the algorithm. Therefore,
their analysis does not cover neither Algorithm~\ref{algosto} nor Algorithm~\ref{algo-nesterov}.

In contrast to our analysis, some works in the literature explore the constant
stepsize regime for some stochastic momentum methods either for smooth
\cite{yan2018unified} or weakly convex objective functions
\cite{mai_johansson2020convergence}. Furthermore, concerning \textsc{Adam}-like
algorithms, several recent works control the minimum of the norms of the
gradients of the objective function evaluated at the iterates of the algorithm
over~$N$ iterations in expectation or with high probability
\cite{basu2018convergence,zhou2018convergence,chen2018closing,zou2019sufficient,chen2018convergence,zaheer2018adaptive,alacaoglu2020convergence,def-bot-bach-usu20,ala-mal-mert-cev20} and establish regret bounds in the convex setting \cite{ala-mal-mert-cev20}.

Similar central limit theorems to Th.~\ref{th:clt} are established in the cases of the stochastic heavy ball algorithm with exponential memory \cite[Th.~2.4]{gad-pan-saa18} and \textsc{Adam} \cite[Th.~5.7]{barakat-bianchi21}. In comparison to \cite{gad-pan-saa18}, we precise that our theorem recovers their result and provides a closed formula for the asymptotic covariance matrix~$\Gamma_2$. Our proof of Th.~\ref{th:clt} differs from the strategies adopted in \cite{gad-pan-saa18} and~\cite{barakat-bianchi21}.

\section{Avoidance of Traps}\label{sec:avt}

In Th.~\ref{th:as_conv_under_stab} and Th.~\ref{th:as_conv_under_stab_nesterov}
above, we established the almost sure convergence of the iterates $x_n$ towards
the set of critical points of the objective function $F$ for both
Algorithms~\ref{algosto} and~\ref{algo-nesterov}.  However, the landscape of
$F$ can contain what is known as ``traps'' for the algorithm, namely, critical
points where the Hessian matrix of~$F$ has negative eigenvalues, making these
critical points local maxima or saddle points. In this section, we show that
the convergence of the iterates to these traps does not take place if the noise
is exciting in some directions.

Starting with the contributions of Pemantle~\cite{pem-90} and Brandi\`ere and
Duflo~\cite{bra-duf-96}, the numerous so-called avoidance of traps results that
can be found in the literature deal with the case where the ODE that underlies
the stochastic algorithm is an autonomous ODE. Obviously, this is neither the
case of~\eqref{ode-generale}, nor of~\eqref{ode-true-nesterov}.  To deal with
this issue, we first state a general avoidance of traps result that extends
\cite{pem-90,bra-duf-96} to a non-autonomous setting, and that has an interest
of its own.  We then apply this result to Algorithms~\ref{algosto}
and~\ref{algo-nesterov}.

\subsection{A general avoidance-of-traps result in a non-autonomous setting}
\label{subsec:gal-avt}

The notations in this subsection and in
Sections~\ref{subsec:prel-avt}--\ref{proof:th-av-traps} are independent from
the rest of the paper.  We recall that for a function $h : \RR^d \rightarrow
\RR^{d'}$, we denote by $\partial_i^k h(x_1, \ldots, x_d)$ the $k^{\text{th}}$
partial derivative of the function $h$ with respect to $x_i$.

The setting of our problem is as follows. Given an integer $d > 0$
and a continuous function $b: \RR^d \times \RR_+ \to \RR^d$, we consider
a stochastic algorithm built around the non-autonomous ODE
$\dot\sz(t) = b(\sz(t), t)$. Let $z_\star \in \RR^d$, and assume that
on $\cV \times \RR_+$ where $\cV$ is a certain neighborhood of $z_\star$,
the function $b$ can be developed as
\begin{equation}
\label{b-dl-z*}
b(z,t) = D (z - z_\star) + e(z,t) ,
\end{equation}
where $e(z_\star,\cdot) \equiv 0$, and where the matrix $D \in \RR^{d\times d}$
is assumed to admit the following spectral factorization: Given $0\leq d^- < d$
and $0 < d^+ \leq d$ with $d^- + d^+ = d$, we can write
\begin{equation}
\label{jordan}
D = Q \Lambda Q^{-1} , \quad
\Lambda = \begin{bmatrix} \Lambda^- \\ & \Lambda^+ \end{bmatrix}\, ,
\end{equation}
where the Jordan blocks that constitute $\Lambda^- \in \RR^{d^-\times d^-}$
(respectively~$\Lambda^+ \in \RR^{d^+ \times d^+}$) are those that contain the
eigenvalues $\lambda_i$ of $D$ for which $\Re\lambda_i \leq 0$ (respectively
$\Re\lambda_i > 0$). Since $d^+ > 0$, the point $z_\star$ is an unstable
equilibrium point of the ODE $\dot\sz(t) = b(\sz(t), t)$, in the sense that
the ODE solution will only be able to converge to $z_\star$ along a specific
so-called invariant manifold which precise characterization will be given in
Section~\ref{subsec:prel-avt} below.

We now consider a stochastic algorithm that is built around this ODE. The
condition $d^+ > 0$ makes that $z_\star$ is a trap that the algorithm should
desirably avoid. The following theorem states that this will be the case if the
noise term of the algorithm is omnidirectional enough.  The idea is to show
that the case being, the algorithm trajectories will move away from the invariant
manifold mentioned above.

\begin{theorem}
\label{th:av-traps}
Given a sequence $(\gamma_n)$ of nonnegative deterministic stepsizes such that
$\sum_n \gamma_n = +\infty$, $\sum_n \gamma_n^2 < +\infty$, and a filtration
$(\mcF_n)$, consider the stochastic approximation algorithm in~$\RR^d$
\[
z_{n+1} = z_n + \gamma_{n+1} b(z_n, \tau_n) + \gamma_{n+1} \eta_{n+1}
 + \gamma_{n+1} \rho_{n+1}
\]
where $\tau_n = \sum_{k=1}^n \gamma_k$.  Assume that the sequences $(\eta_n)$
and $(\rho_n)$ are adapted to~$\mcF_n$, and that $z_0$ is $\mcF_0$--measurable.
Assume that there exists $z_\star \in \RR^d$ such that Eq.~\eqref{b-dl-z*}
holds true on $\mathcal V \times \RR_+$, where $\mathcal V$ is a
neighborhood of $z_\star$. Consider the spectral factorization~\eqref{jordan},
and assume that $d^+ > 0$.  Assume moreover that the function $e$
at the right hand side of Eq.~\eqref{b-dl-z*} satisfies the
conditions:
\begin{enumerate}[{\it i)}]
\item\label{e=0}  $e(z_\star,\cdot) \equiv 0$.
\item \label{e_reg} On $\mathcal V \times \RR_+$, the functions
$\partial_2^n\partial_1^k e(z,t)$ exist and are continuous
for $0\leq n< 2$ and $0\leq k+n \leq 2$.
\item \label{hyp:d1_eps} The following convergence holds :
\begin{equation}
\label{d1_eps}
\lim_{(z,t)\to(z_\star,\infty)} \| \partial_1 e(z,t) \| = 0\,.
\end{equation}
\item \label{dt_eps} There exist $t_0 > 0$ and a neighborhood $\cW \subset \RR^d$ of $z_\star$ s.t.
$$
  \sup_{z \in \cW, t \geq t_0} \norm{ \partial_2 e(z, t)} <~+~\infty\,
  \quad
  \text{and} \quad \sup_{z \in \cW, t \geq t_0} \norm{ \partial_1^2 e(z, t)} <~+~\infty\,.
$$
\end{enumerate}

Moreover, suppose that :
\begin{enumerate}[{\it i)},resume]
\item\label{sum_rho_carre} $\sum_n \| \rho_{n+1} \|^2 \1_{z_n \in \mathcal W} < \infty$ almost surely.
\item\label{traps-noise-mom}
 $\limsup \EE [\| \eta_{n+1} \|^{4} \, | \, \mcF_n ]
     \1_{z_n \in \mathcal W}< \infty$, and
   $\EE [\eta_{n+1} \, | \, \mcF_n ] \1_{z_n \in \mathcal W} = 0$.
\item
\label{traps-noise}
Writing $\tilde\eta_n = Q^{-1} \eta_n = (\tilde\eta^-_n, \tilde\eta^+_n)$
with $\tilde\eta^\pm_n \in \RR^{d^\pm}$, for some $c^2 > 0$, it holds that
$$
\liminf \EE [\| \tilde\eta^+_{n+1} \|^{2} \, | \, \mcF_n ]
   \1_{z_n \in \mathcal W} \geq c^2 \1_{z_n \in \mathcal W}\,.
$$
\end{enumerate}
Then, $\PP( [z_n \to z_\star] )= 0$.
\end{theorem}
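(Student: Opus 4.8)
Assume, for contradiction, that $\PP([z_n\to z_\star])>0$ and take $z_\star=0$. The argument follows the classical avoidance-of-traps scheme of Pemantle and Brandière--Duflo; the one genuinely new point is that the underlying ODE $\dot z=b(z,t)$ is nonautonomous, so the autonomous (un)stable manifold theorem has to be replaced by a nonautonomous version \cite{dal-krei-(livre)74,klo-ras-(livre)11}. \emph{Localization.} On $[z_n\to 0]$ the iterates eventually remain in any prescribed neighborhood of $0$, so the indicators $\1_{z_n\in\mathcal W}$ are eventually equal to $1$ along the trajectory; a standard truncation therefore lets me modify $b$, $e$, $(\eta_n)$ and $(\rho_n)$ outside a small neighborhood $\mathcal W'\subset\mathcal V\cap\mathcal W$ of $0$, unchanged on $\mathcal W'$, so that for the modified recursion the following hold \emph{globally}: $b(z,t)=Dz+e(z,t)$ with $e(0,\cdot)\equiv 0$; $\sup_z\|\partial_1 e(z,t)\|\leq\delta$ for $t\geq t_0$ with $\delta$ as small as wanted (by \eqref{d1_eps}); $\partial_1^2 e$ and $\partial_2 e$ are bounded; $\EE[\eta_{n+1}\mid\mcF_n]=0$ and $\sup_n\EE[\|\eta_{n+1}\|^4\mid\mcF_n]<\infty$; $\sum_n\|\rho_{n+1}\|^2<\infty$ a.s.; and the nondegeneracy bound of \ref{traps-noise} holds on $\{z_n\in\mathcal W'\}$. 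Since the two recursions coincide as long as the iterates stay in $\mathcal W'$, and on $[z_n\to 0]$ (for the original algorithm) this is eventually the case, it suffices to prove $\PP(z_n\to 0)=0$ for the modified recursion, whose deterministic stepsizes still satisfy $\sum\gamma_n=\infty$, $\sum\gamma_n^2<\infty$ (and $\tau_n\uparrow\infty$).

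\emph{Nonautonomous manifold and the transverse coordinate.} Pass to coordinates $y=Q^{-1}z=(y^-,y^+)\in\RR^{d^-}\times\RR^{d^+}$, in which the linear part of \eqref{jordan} is block-diagonal $\mathrm{diag}(\Lambda^-,\Lambda^+)$, all eigenvalues of $\Lambda^-$ having nonpositive real part and $\beta_0\eqdef\min\{\mathrm{Re}\,\lambda:\lambda\in\mathrm{sp}(\Lambda^+)\}>0$. After the localization, $e$ is a uniformly small $C^1$ perturbation of $\mathrm{diag}(\Lambda^-,\Lambda^+)$ for $t\geq t_0$, so the nonautonomous pseudo-stable (center-stable) manifold theorem yields $T_1\geq t_0$ and a map $\psi:\RR^{d^-}\times[T_1,\infty)\to\RR^{d^+}$, of class $C^1$ and even $C^{1,1}$ in $y^-$ (using the regularity \ref{e_reg}), with $\psi(0,\cdot)\equiv 0$ and $\sup_t(\|\partial_1\psi\|+\|\partial_2\psi\|+\|\partial_1^2\psi\|)$ small, whose graphs are locally invariant for the $y$-flow. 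Introduce the time-dependent, near-identity change of variable $w\eqdef y^+-\psi(y^-,t)$: subtracting the invariance identity from the $y^+$-equation makes the coupling to $y^-$ cancel to first order, giving $\dot w=A(t)w$ with $A(t)=\Lambda^++O(\delta)$ \emph{uniformly}, hence after discretization
\[
  w_{n+1}=(I+\gamma_{n+1}A_n)w_n+\gamma_{n+1}\tilde\eta^+_{n+1}+\gamma_{n+1}\varrho_{n+1},
\]
where $A_n\to\Lambda^+$, so the transition matrices $\Phi_{n,m}\eqdef\prod_{k=m}^{n-1}(I+\gamma_{k+1}A_k)$ are \emph{uniformly expanding} (in a suitable adapted norm, $\|\Phi_{n,m}v\|\gtrsim\prod_{k=m}^{n-1}(1+\tfrac{\beta_0}{2}\gamma_{k+1})\,\|v\|$); here $\EE[\tilde\eta^+_{n+1}\mid\mcF_n]=0$, $\sup_n\EE[\|\tilde\eta^+_{n+1}\|^4\mid\mcF_n]<\infty$, $\liminf_n\EE[\|\tilde\eta^+_{n+1}\|^2\mid\mcF_n]\geq c^2/2$ on $\{z_n\in\mathcal W'\}$ (the $O(\delta)$ correction coming from $\partial_1\psi$ being harmless for $\delta$ small), and $\varrho$ collects the $O(\gamma_n\|w_n\|^2)$ and $O(\gamma_n^2)$ discretization errors, the $t$-increment of $\psi$, and the original $\rho$, so that $\sum_n\|\varrho_{n+1}\|^2<\infty$ a.s. Since $\|\psi(\zeta,t)\|\leq\delta\|\zeta\|$, on $[z_n\to 0]$ we have $y_n\to 0$, hence $w_n=y^+_n-\psi(y^-_n,\tau_n)\to 0$; it remains to rule this out.

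\emph{The trap is repulsive for $w_n$.} From $\sum_n\gamma_{n+1}^2\|\Phi_{n+1,m}^{-1}\|^2<\infty$ (a fortiori from $\sum\gamma_n^2<\infty$) and $\sum_n\|\varrho_{n+1}\|^2<\infty$, the renormalized sequence $\tilde w_n\eqdef\Phi_{n,m}^{-1}w_n=w_m+\sum_{k=m}^{n-1}\gamma_{k+1}\Phi_{k+1,m}^{-1}(\tilde\eta^+_{k+1}+\varrho_{k+1})$ converges a.s.\ to a limit $\tilde w_\infty$ (a martingale part with summable conditional variances, plus an absolutely convergent part), and the event $N\eqdef\{\tilde w_\infty=0\}$ does not depend on $m$ since $\tilde w_\infty^{(m')}=\Phi_{m',m}\tilde w_\infty^{(m)}$. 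Because $\Phi_{n,m}$ is uniformly expanding, $w_n\to 0$ forces $\tilde w_\infty=0$, so $[z_n\to 0]\subset N$. Conversely, the nondegeneracy \ref{traps-noise} together with the uniform fourth moment \ref{traps-noise-mom} yields, via a Paley--Zygmund-type anti-concentration estimate applied to the martingale part of $\tilde w_\infty$, a constant $\kappa>0$ and an $M_0$ such that, for every $m\geq M_0$, $\PP(N\mid\mcF_m)\leq 1-\kappa$ on $\{z_m\in\mathcal W'\}$. To conclude, Lévy's $0$--$1$ law gives $\PP(N\mid\mcF_m)\to\1_N$ a.s.; but on $[z_n\to 0]\subset N$ the iterate is eventually in $\mathcal W'$, so $\PP(N\mid\mcF_m)\leq 1-\kappa$ for all $m$ past some ($\omega$-dependent) threshold, contradicting $\PP(N\mid\mcF_m)\to\1_N=1$. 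Hence $\PP([z_n\to 0])=\PP([z_n\to 0]\cap N)=0$.

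\emph{Main obstacle.} The crux is the manifold step: extracting from the nonautonomous Poincaré/stable-manifold theorem a time-dependent invariant manifold \emph{with quantitative, uniform $C^{1,1}$ control} on $\psi$, and verifying that $w=y^+-\psi(y^-,t)$ genuinely decouples the expanding block with a uniform rate — this is the nonautonomous ingredient, and it is where \ref{e_reg}--\ref{dt_eps} are consumed. A secondary but still delicate task is to push the continuous-time expansion through the noisy Euler scheme with an a.s.\ square-summable remainder $\varrho$ (using $\sum\gamma_n^2<\infty$, the boundedness of $\partial_1^2 e,\partial_2 e$, and \ref{sum_rho_carre}) and to re-run the anti-concentration/escape estimate with the time-varying matrices $A_n$ and with the noise nondegeneracy available only on $\{z_n\in\mathcal W'\}$, which is precisely the role of \ref{traps-noise-mom} and \ref{traps-noise}.
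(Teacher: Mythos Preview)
Your scheme is the paper's: change to $y=Q^{-1}(z-z_\star)$, invoke a nonautonomous invariant-manifold theorem (the paper's Prop.~\ref{prop:inv-man-local}) to get the map $w$ (your $\psi$), pass to the transverse coordinate $u^+=y^+-w(y^-,t)$, derive a recursion $u^+_{n+1}=(I+\gamma_{n+1}(\Lambda^++\Delta_n))u^+_n+\gamma_{n+1}\bar\eta_{n+1}+\gamma_{n+1}\bar\rho_{n+1}$ with $\Delta_n\to 0$ on the event $[y_n\to 0]$, and conclude by an escape-from-repulsive-equilibrium argument. The only packaging difference is that the paper applies Brandi\`ere--Duflo's result (Prop.~\ref{prop:duflo}) as a black box for the last step, whereas you sketch that result's proof via transition matrices and L\'evy's $0$--$1$ law; this is not a genuinely different route.

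One imprecision worth flagging: your description of the remainder $\varrho$ as ``$O(\gamma_n\|w_n\|^2)$ and $O(\gamma_n^2)$ discretization errors'' misses the dominant contribution. The second-order Taylor remainder in $\psi(y^-_{n+1},\cdot)-\psi(y^-_n,\cdot)$ is $O(\|y^-_{n+1}-y^-_n\|^2)$, so after division by $\gamma_{n+1}$ its square contains a term $\gamma_{n+1}^2\|\tilde\eta_{n+1}^-\|^4$. This, and not the Paley--Zygmund step, is the reason the \emph{fourth}-moment bound in hypothesis~\ref{traps-noise-mom} is required (to obtain $\EE\sum_n\gamma_{n+1}^2\|\eta_{n+1}\|^4<\infty$ and hence $\sum_n\|\bar\rho_n\|^2<\infty$ a.s.); the paper checks this explicitly. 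Relatedly, in your L\'evy $0$--$1$ argument the term $R_\infty=\sum_{k\geq m}\gamma_{k+1}\Phi_{k+1,m}^{-1}\varrho_{k+1}$ is not $\mcF_m$-measurable (the $\varrho_k$ contain future noise via this same Taylor remainder), so the anti-concentration of the martingale part does not immediately give $\PP(N\mid\mcF_m)\leq 1-\kappa$; this is exactly what Brandi\`ere--Duflo's argument handles carefully, which is why the paper cites it rather than reproving it.
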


\begin{remark}
Assumptions~\ref{e=0} to \ref{dt_eps} of Th.~\ref{th:av-traps} are related to
the function $e$ defined in Eq.~\eqref{b-dl-z*}, which can be seen as a
non-autonomous perturbation of the autonomous linear ODE $\dot{\sz}(t) =
D(\sz(t)-z_\star)$.  These assumptions guarantee the existence of a local
(around the unstable equilibrium $z_\star$) non-autonomous invariant manifold
of the non-autonomous ODE $\dot{\sz}(t) = b(\sz(t),t)$ with enough regularity
properties, as provided by Prop.~\ref{prop:inv-man}
and Prop.~\ref{prop:inv-man-local} below.
\end{remark}

\subsection{Application to the stochastic algorithms}
\label{subsec:appli-avt}

\subsubsection{Trap avoidance of the general algorithm~\ref{algosto}}
\label{subsec-apt-algo-gal}

In Th.~\ref{th:as_conv_under_stab} above, we showed that the sequence $(z_n)$
generated by Algorithm~\ref{algosto} converges almost surely towards the set
$\Upsilon$ defined in Eq.~\eqref{eq:Eq_infty}. Our purpose now is to show that
the traps in~$\Upsilon$ (to be characterized below) are avoided by the
stochastic algorithm~\ref{algosto} under a proper omnidirectionality assumption
on the noise.

Our first task is to write Algorithm~\ref{algosto} in a manner compatible
with the statement of Th.~\ref{th:av-traps}.  The following decomposition holds
for the sequence $(z_n = ( v_n, m_n, x_n), n \in \NN)$ generated by this
algorithm:
\[
z_{n+1} = z_n + \gamma_{n+1} g(z_n, \tau_n) + \gamma_{n+1} \eta_{n+1} + \gamma_{n+1}\tilde{\rho}_{n+1},
\]
where
$\tilde{\rho}_{n+1} =
\left(
0\,, 0\,, \frac{m_{n}}{\sqrt{v_{n} + \varepsilon}} - \frac{m_{n+1}}{\sqrt{v_{n+1} + \varepsilon}}
\right)$,
and where $\eta_{n+1}$ is the martingale increment with respect to the
filtration $(\mcF_n)$ which is defined by Eq.~\eqref{eq:noise_eta}.

Observe from Eq.~\eqref{eq:Eq_infty} that each $z_\star \in \Upsilon$ is
written as $z_\star = (v_\star, 0, x_\star)$ where $x_\star \in \zer \nabla F$,
and $v_\star = q_\infty^{-1}p_\infty S(x_\star)$ (in particular, $x_\star$ and
$z_\star$ are in a one-to-one correspondence). We need to linearize the
function $g(\cdot,t)$ around $z_\star$. The following assumptions will be
required.

\begin{assumption}
  \label{hyp:F_C4_S_C3}
  The functions $F$ and $S$ belong respectively to $\mC^3(\RR^d, \RR)$ and~$\mC^2(\RR^d, \RR_+^d)$.
\end{assumption}

\begin{assumption}
  \label{hyp:hrpq_C1_deriv_bounded}
  The functions $\sh,\sr,\spp, \sq$ belong to
$\mC^1((0,\infty),\RR_+)$ and have bounded derivatives on $[t_0,+\infty)$ for some $t_0 > 0$.
\end{assumption}

\begin{lemma}
\label{lem:lm-lin-g}
Let Assumptions~\ref{hyp:hrpq}-\ref{hyp:h} to~\ref{hyp:p}, \ref{hyp:F_C4_S_C3}
and~\ref{hyp:hrpq_C1_deriv_bounded} hold. Let $z_\star = (v_\star, 0, x_\star)
\in \Upsilon$. Then, for every $z \in \cZ_+$ and every $t > 0$, the following
decomposition holds true:
\[
g(z, t) = D (z-z_\star) + e(z,t) + c(t) ,
\]
\[
\text{where}
\,\,
D 
  = \begin{bmatrix}
 - q_\infty I_d & 0 & p_\infty \nabla S(x_\star) \\
 0 & - r_\infty I_d & h_\infty \nabla^2 F(x_\star) \\
 0 & - V & 0
\end{bmatrix},
\quad
c(t) = \begin{bmatrix}
   \spp(t) S(x_\star) - \sq(t) v_\star
     \\ 0 \\ 0 \end{bmatrix}\,,
\]
and the function $e(z,t)$ (defined in Section~\ref{prf:lm-lin-g} below for
conciseness) has the same properties as its analogue in the statement of
Th.~\ref{th:av-traps}.
\end{lemma}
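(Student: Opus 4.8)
The plan is to verify that the decomposition $g(z,t) = D(z-z_\star) + e(z,t) + c(t)$ is simply the first-order Taylor expansion of $g(\cdot,t)$ around $z_\star$ at the limiting coefficients, with all the time-dependence of the coefficients and the higher-order terms collected into $e$. Write $z = (v,m,x)$ and $z_\star = (v_\star, 0, x_\star)$. Recall from~\eqref{eq:gode} that
$$
g(z,t) = \begin{bmatrix} \spp(t) S(x) - \sq(t) v \\ \sh(t)\nabla F(x) - \sr(t) m \\ -m/\sqrt{v+\varepsilon} \end{bmatrix}.
$$
First I would expand each block. For the first block, write $\spp(t) S(x) - \sq(t) v = \spp(t) S(x_\star) - \sq(t) v_\star$ (this is $c(t)$, the first component) $+ \big(-\sq(t)\big)(v-v_\star) + \spp(t)\big(S(x) - S(x_\star)\big)$. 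Since $S \in \mathcal C^2$, we have $S(x) - S(x_\star) = \nabla S(x_\star)(x - x_\star) + O(\|x-x_\star\|^2)$; extracting the piece $p_\infty \nabla S(x_\star)(x-x_\star)$ to form the corresponding entry of $D$, the remainder $(-\sq(t) + q_\infty)(v-v_\star) + (\spp(t) - p_\infty)\nabla S(x_\star)(x-x_\star) + \spp(t)\big(S(x) - S(x_\star) - \nabla S(x_\star)(x-x_\star)\big)$ goes into the first component of $e(z,t)$. The second block is handled identically using $\nabla F \in \mathcal C^2$ (from Assumption~\ref{hyp:F_C4_S_C3}): peel off $-r_\infty m + h_\infty \nabla^2 F(x_\star)(x-x_\star)$ for $D$ (note $\nabla F(x_\star) = 0$, so there is no constant term, consistent with the second component of $c(t)$ being $0$), and put $(-\sr(t) + r_\infty) m + (\sh(t) - h_\infty)\nabla^2 F(x_\star)(x-x_\star) + \sh(t)\big(\nabla F(x) - \nabla^2 F(x_\star)(x-x_\star)\big)$ into the second component of $e$. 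For the third block $-m/\sqrt{v+\varepsilon}$, which is time-independent, I would Taylor-expand in $(v,m)$ around $(v_\star,0)$: the value is $0$, the linear part is $-m/\sqrt{v_\star+\varepsilon} = -Vm$ (using $V = \mathrm{diag}((\varepsilon+v_\star)^{\odot -1/2})$, since $m=0$ kills the $v$-derivative of the linearization), and the remainder $-m/\sqrt{v+\varepsilon} + Vm$ is the third component of $e$. This exactly reproduces the stated $D$ and $c(t)$.

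Next I would verify that this $e$ satisfies conditions \ref{e=0}--\ref{dt_eps} of Theorem~\ref{th:av-traps}. Condition \ref{e=0}: plugging $z = z_\star$ makes every difference $v - v_\star$, $x - x_\star$, $m$, and each Taylor remainder vanish, so $e(z_\star,\cdot)\equiv 0$. Condition \ref{e_reg} (existence and continuity of $\partial_2^n\partial_1^k e$ for $n<2$, $k+n\le 2$): the $x$-dependence enters through $S$ and $\nabla F$, which are $\mathcal C^2$ and $\mathcal C^2$ respectively (indeed $F\in\mathcal C^3$, $S\in\mathcal C^2$), so $\partial_1^k e$ exists and is continuous for $k\le 2$; the $t$-dependence enters only linearly through $\spp(t),\sq(t),\sh(t),\sr(t)$, which are $\mathcal C^1$ on $(0,\infty)$ by Assumption~\ref{hyp:hrpq_C1_deriv_bounded}, so one $t$-derivative is fine, and since $e$ is affine in these four scalar functions, $\partial_2 e$ no longer depends on $t$ — in particular mixed derivatives $\partial_2\partial_1^k e$ for $k\le 1$ exist and are continuous. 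Condition \ref{hyp:d1_eps} ($\|\partial_1 e(z,t)\|\to 0$ as $(z,t)\to(z_\star,\infty)$): differentiating $e$ in $z$, the terms carrying factors $(-\sq(t)+q_\infty)$, $(\spp(t)-p_\infty)$, $(-\sr(t)+r_\infty)$, $(\sh(t)-h_\infty)$ vanish as $t\to\infty$ by Assumption~\ref{hyp:hrpq}-\ref{hyp:h} to \ref{hyp:p} (the coefficient functions converge to their limits), while the Taylor-remainder terms have $z$-derivatives that vanish as $z\to z_\star$ because $S,\nabla F$ are $\mathcal C^2$ and $-m/\sqrt{v+\varepsilon}+Vm$ has vanishing gradient at $(v_\star,0)$. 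Condition \ref{dt_eps} (local boundedness of $\partial_2 e$ and $\partial_1^2 e$ on $\cW\times[t_0,\infty)$): $\partial_2 e$ is a finite sum of terms, each a product of $\dot\spp(t),\dot\sq(t),\dot\sh(t),\dot\sr(t)$ — bounded on $[t_0,\infty)$ by Assumption~\ref{hyp:hrpq_C1_deriv_bounded} — with a continuous (hence locally bounded) function of $z$; $\partial_1^2 e$ involves $\nabla^2 S$ and $\nabla^3 F$ (continuous, since $S\in\mathcal C^2$ — here one uses that the second $x$-derivatives of $S$ enter, so one actually wants $S\in\mathcal C^2$, which is assumed — and $F\in\mathcal C^3$) and the Hessian of $-m/\sqrt{v+\varepsilon}$, all continuous and hence bounded on the compact closure of a small enough neighborhood $\cW$ of $z_\star$, uniformly in $t$ since those second $z$-derivatives are multiplied only by bounded scalar functions $\spp(t),\sh(t)$ of $t$.

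I do not expect a serious obstacle here: the lemma is essentially bookkeeping, matching a Taylor expansion against the asserted matrix. The one point that requires a little care is the regularity accounting — making sure that the smoothness hypotheses in Assumption~\ref{hyp:F_C4_S_C3} (namely $F\in\mathcal C^3$, $S\in\mathcal C^2$) are exactly enough to produce continuous $\partial_1^2 e$ (which needs one more $x$-derivative of the linear-in-$x$ parts of $S$ and $\nabla F$, i.e.\ $\nabla^2 S$ and $\nabla^3 F$) — and that the two places where a $t$-derivative meets an $x$-derivative in condition \ref{e_reg} are covered because $e$ is affine in the four scalar coefficient functions $\spp,\sq,\sh,\sr$, so $\partial_2 e$ is again a nice function. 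Writing out the explicit formula for $e$ (deferred to Section~\ref{prf:lm-lin-g}) and reading off each partial derivative term-by-term completes the verification.
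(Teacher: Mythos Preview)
Your proposal is correct and follows essentially the same approach as the paper: expand $g(z,t)$ around $z_\star$, peel off the linear part at the limiting coefficients to get $D$, collect the value at $z_\star$ into $c(t)$, and put everything else into $e$; the paper writes out the same $e$ (with a slightly different but equivalent grouping of the third component) and then simply asserts that the required regularity properties are ``easy to see,'' whereas you spell out the verification of conditions~\ref{e=0}--\ref{dt_eps}. One minor slip: your claim that ``$\partial_2 e$ no longer depends on $t$'' is not literally true (it involves $\dot\spp,\dot\sq,\dot\sh,\dot\sr$), but your actual argument---that $e$ factors as sums of $a_j(t)b_j(z)$ with $a_j\in\mathcal C^1$ and $b_j\in\mathcal C^2$, so mixed derivatives exist and are continuous---is correct.
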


Using this lemma, the algorithm iterate $z_{n+1}$ can be rewritten as an
instance of the algorithm in the statement of Th.~\ref{th:av-traps}, namely,
\begin{equation}
\label{eq:algo-gal-avt}
z_{n+1} = z_n + \gamma_{n+1} b(z_n, \tau_n) +  \gamma_{n+1} \eta_{n+1}
 + \gamma_{n+1} \rho_{n+1} ,
\end{equation}
where in our present setting, $b(z,t) = g(z,t) - c(t) = D (z-z_\star) + e(z,t)$
and $\rho_n = c(\tau_{n-1}) + \tilde{\rho}_n$. In the following assumption,
we use the well-known fact that a symmetric matrix $H$ has the same inertia
as $A H A^\T$ for an arbitrary invertible matrix $A$.
\begin{assumption}
\label{hyp:avt}
Let $x_\star \in \zer \nabla F$, let $v_\star = q_\infty^{-1}p_\infty
S(x_\star)$, and define the diagonal matrix $V= \diag( (v_\star +
\varepsilon)^{\odot -\frac 12})$ as in~\eqref{eq:V_matrix}. Assume the
following conditions:
\begin{enumerate}[{\it i)}]
  \item\label{rho_sum_carre_appli} $\sum_n \left( q_\infty p_n - p_\infty q_n \right)^2 < \infty\,,$

  \item\label{trap} The Hessian matrix $\nabla^2 F(x_\star)$ has a negative eigenvalue.
  \item\label{moment_appli_avt} There exists $\delta>0$ such that
  $
  \sup_{x\in B(x_\star,\delta)}\EE_\xi[\|\nabla f(x,\xi)\|^{8}]<\infty\,.
  $
\item\label{cond-noise}
 Defining $\Pi_{\text{u}}$ as the orthogonal projector on the eigenspace of
$V^{\frac 12} \nabla^2 F(x_\star) V^{\frac 12}$ that is associated with the
negative eigenvalues of this matrix, it holds that
\[
\Pi_{\text{u}} V^{\frac 12} \EE_\xi
  (\nabla f(x_\star, \xi) - \nabla F(x_\star) )
  (\nabla f(x_\star, \xi) - \nabla F(x_\star) )^\T V^{\frac 12}
  \Pi_{\text{u}} \neq 0.
\]
\end{enumerate}
\end{assumption}

\begin{theorem}
\label{th:avt-application}
Let Assumptions~\ref{hyp:hrpq}, \ref{hyp:model_bis}, and \ref{hyp:F_C4_S_C3},
\ref{hyp:hrpq_C1_deriv_bounded} hold true. Let $z_\star \in \Upsilon$ be such
that Assumption~\ref{hyp:avt} holds true for this $z_\star$.
Then, the eigenspace associated with the eigenvalues of $D$ with positive real parts has the same dimension as the eigenspace of $\nabla^2 F(x_\star)$
associated with the negative eigenvalues of this matrix.
Let $(z_n =(v_n,m_n,x_n):n\in \NN)$ be the random sequence generated by
Algorithm~\ref{algosto} with stepsizes satisfying $\sum_n \gamma_n = + \infty$
and~$\sum_n \gamma_n^2 < +\infty$.  Then, $\PP( [z_n \to z_\star] )= 0$.
\end{theorem}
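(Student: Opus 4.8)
\textbf{Proof plan for Theorem~\ref{th:avt-application}.}
The strategy is to recognize, via Lemma~\ref{lem:lm-lin-g}, that the iteration \eqref{eq:algo-gal-avt} is exactly of the form treated in Theorem~\ref{th:av-traps}, and then to verify each of its hypotheses; the only subtle point is translating the abstract non-degeneracy condition \ref{traps-noise} into Assumption~\ref{hyp:avt}-\ref{cond-noise}. First I would analyze the matrix $D$ of Lemma~\ref{lem:lm-lin-g}. Reordering the coordinates as $(m,x,v)$, $D$ becomes block lower-triangular with diagonal blocks $\mathcal H = \begin{bmatrix} -r_\infty I_d & h_\infty\nabla^2 F(x_\star) \\ -V & 0\end{bmatrix}$ and $-q_\infty I_d$, so $\mathrm{sp}(D) = \mathrm{sp}(\mathcal H) \cup \{-q_\infty\}$ with $-q_\infty < 0$. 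An eigenpair $(\lambda,(a,b))$ of $\mathcal H$ with $\lambda \neq 0$ satisfies $b = -\lambda^{-1}Va$ and $\nabla^2 F(x_\star)V a = -h_\infty^{-1}\lambda(\lambda+r_\infty)a$, so $a$ is an eigenvector of $\nabla^2 F(x_\star)V$, a matrix with the same inertia as $V^{\frac12}\nabla^2 F(x_\star)V^{\frac12}$ (hence as $\nabla^2 F(x_\star)$) since $V$ is positive definite. Writing $\mu$ for the associated eigenvalue, the two values of $\lambda$ solve $\lambda^2 + r_\infty\lambda + h_\infty\mu = 0$: for $\mu>0$ both roots have negative real part, for $\mu=0$ they are $0$ and $-r_\infty$, and for $\mu<0$ the discriminant $r_\infty^2 - 4h_\infty\mu > r_\infty^2 > 0$ gives one simple positive root $\lambda_+$ and one negative root. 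Hence the number of eigenvalues of $D$ with positive real part equals the number of negative eigenvalues of $\nabla^2 F(x_\star)$; this proves the dimension claim, Assumption~\ref{hyp:avt}-\ref{trap} forces $d^+>0$, and the unstable subspace of $D$ carries no Jordan block.

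By Lemma~\ref{lem:lm-lin-g} and \eqref{eq:algo-gal-avt}, $z_{n+1} = z_n + \gamma_{n+1}b(z_n,\tau_n) + \gamma_{n+1}\eta_{n+1} + \gamma_{n+1}\rho_{n+1}$ with $b(z,t) = D(z-z_\star) + e(z,t)$, $\rho_{n+1} = c(\tau_n) + \tilde\rho_{n+1}$, $c(\tau_n) = (q_\infty^{-1}(q_\infty p_n - p_\infty q_n)S(x_\star),0,0)$, and $\tilde\rho_{n+1} = (0,0,\, m_n/\sqrt{v_n+\varepsilon} - m_{n+1}/\sqrt{v_{n+1}+\varepsilon})$. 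The stepsize conditions $\sum\gamma_n=+\infty$, $\sum\gamma_n^2<+\infty$ are assumed; conditions \ref{e=0}--\ref{dt_eps} on $e$ hold by the last sentence of Lemma~\ref{lem:lm-lin-g}, which rests on Assumptions~\ref{hyp:F_C4_S_C3} and \ref{hyp:hrpq_C1_deriv_bounded}. For \ref{sum_rho_carre}, on a bounded neighborhood $\cW$ of $z_\star$ the map $(v,m)\mapsto m/\sqrt{v+\varepsilon}$ is Lipschitz and the algorithm increments $v_{n+1}-v_n$, $m_{n+1}-m_n$ are $O(\gamma_{n+1})$ with conditional second moments controlled on $\cW$ by Assumption~\ref{hyp:avt}-\ref{moment_appli_avt}, so $\EE[\,\|\tilde\rho_{n+1}\|^2\1_{z_n\in\cW}\mid\mcF_n\,]\leq C\gamma_{n+1}^2$ and $\sum_n\|\tilde\rho_{n+1}\|^2\1_{z_n\in\cW}<\infty$ a.s.\ by Tonelli, while $\sum_n\|c(\tau_n)\|^2 \lesssim \sum_n(q_\infty p_n - p_\infty q_n)^2 < \infty$ by Assumption~\ref{hyp:avt}-\ref{rho_sum_carre_appli}. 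The centering $\EE[\eta_{n+1}\mid\mcF_n]\1_{z_n\in\cW}=0$ is built into the definition of $\eta_{n+1}$, and since $\eta_{n+1}$ is an affine function of $\nabla f(x_n,\xi_{n+1})$ and $\nabla f(x_n,\xi_{n+1})^{\odot 2}$ with coefficients bounded on $\cW$, the eighth-moment bound of Assumption~\ref{hyp:avt}-\ref{moment_appli_avt} gives $\limsup\EE[\,\|\eta_{n+1}\|^4\mid\mcF_n\,]\1_{z_n\in\cW}<\infty$, i.e.\ \ref{traps-noise-mom}.

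It remains to check \ref{traps-noise}, which is the heart of the matter. Since $D$ is block lower-triangular in the $(m,x,v)$ ordering, the left eigenvectors of $D$ for its unstable eigenvalues have zero $v$-component; combined with the fact that the $v$-block of $\eta_{n+1}$ is the only component tied to $\nabla f^{\odot2}$, this shows that the unstable component $\tilde\eta^+_{n+1}$ of $Q^{-1}\eta_{n+1}$ is a fixed linear image of $h_n(\nabla f(x_n,\xi_{n+1}) - \nabla F(x_n))$. Working out the left eigenvectors of $\mathcal H$ for the roots $\lambda_+$ found in Step~1 identifies the relevant rows with $u^\T V^{\frac12}$, $u$ ranging over an orthonormal basis of the negative eigenspace of $V^{\frac12}\nabla^2 F(x_\star)V^{\frac12}$, so that, after passing to the limit $n\to\infty$ (using $h_n\to h_\infty>0$, $x_n\to x_\star$ on $\cW$ for $\cW$ small, and continuity of $x\mapsto\mathrm{Cov}_\xi(\nabla f(x,\xi))$ near $x_\star$, which follows from Assumption~\ref{hyp:avt}-\ref{moment_appli_avt}), the quantity $\EE[\,\|\tilde\eta^+_{n+1}\|^2\mid\mcF_n\,]\1_{z_n\in\cW}$ is bounded below by a fixed positive multiple of $\mathrm{tr}\big(\Pi_{\text{u}}V^{\frac12}\EE_\xi(\nabla f(x_\star,\xi)-\nabla F(x_\star))(\nabla f(x_\star,\xi)-\nabla F(x_\star))^\T V^{\frac12}\Pi_{\text{u}}\big)\1_{z_n\in\cW}$. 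This trace is the trace of a PSD matrix that is nonzero by Assumption~\ref{hyp:avt}-\ref{cond-noise}, hence a strictly positive constant $c^2$, which is \ref{traps-noise}. The delicate part here is precisely keeping track of the non-orthogonal change of basis $Q$ and checking that it does not annihilate the noise on the unstable directions; once \ref{e=0}--\ref{traps-noise} are established, Theorem~\ref{th:av-traps} applies verbatim and yields $\PP([z_n\to z_\star])=0$.
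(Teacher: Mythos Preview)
Your proposal is correct and follows essentially the same route as the paper: you carry out the spectral analysis of $D$ via its block-triangular structure and the quadratic $\lambda^2+r_\infty\lambda+h_\infty\mu=0$ (the paper packages this as Lemma~\ref{lm-D}), verify hypotheses \ref{e=0}--\ref{traps-noise-mom} of Theorem~\ref{th:av-traps} exactly as the paper does, and translate condition~\ref{traps-noise} by computing the left eigenvectors of $D$ for the unstable eigenvalues, finding that the $m$-block acts as $WV^{\frac12}$ with $W$ spanning the negative eigenspace of $V^{\frac12}\nabla^2 F(x_\star)V^{\frac12}$. The only cosmetic differences are that you reorder coordinates to $(m,x,v)$ to read off the block-triangular structure, and that you argue via right eigenvectors in the dimension count before switching to left eigenvectors for the noise condition; the paper works with left eigenvectors throughout Lemma~\ref{lm-D}.
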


The assumptions and the result call for some comments.

\begin{remark}
The definition of a trap as regards the general algorithm in the statement of
Th.~\ref{th:av-traps} is that the matrix $D$ in Eq.~\eqref{b-dl-z*} has
eigenvalues with positive real parts.  Th.~\ref{th:avt-application} states that
this condition is equivalent to $\nabla^2 F(x_\star)$ having negative
eigenvalues. What's more, the dimension of the invariant subspace of $D$
corresponding to the eigenvalues with positive real parts is equal to the
dimension of the negative eigenvalue subspace of $\nabla^2 F(x_\star)$. Thus,
Assumption~\ref{hyp:avt}--\ref{cond-noise} provides the ``largest'' subspace
where the noise energy must be non zero for the purpose of avoiding the trap.
\end{remark}

\begin{remark}
Assumptions~\ref{hyp:hrpq_C1_deriv_bounded}
and~\ref{hyp:avt}-\ref{rho_sum_carre_appli} are satisfied by many widely
studied algorithms, among which \textsc{RMSProp} and \textsc{Adam}.
\end{remark}

\begin{remark}
The results of Th.~\ref{th:avt-application} can be straightforwardly adapted to
the case of~\eqref{ode-adagrad}. Assumption~\ref{hyp:avt}-\ref{cond-noise} on
the noise is unchanged.
\end{remark}

In the case of the \textsc{S-NAG} algorithm, the assumptions become
particularly simple. We state the afferent result separately.

\subsubsection{Trap avoidance for \textsc{S-NAG}}
\label{subsubsec:aplit-avt-nest}

\begin{assumption}
\label{hyp:avt-nesterov}
Let $x_\star \in \zer \nabla F$ and
  let the following conditions hold.
\begin{enumerate}[{\it i)}]
  \item\label{trap-nesterov} The Hessian matrix $\nabla^2 F(x_\star)$ has a negative eigenvalue.
  \item\label{moment_appli_avt-nesterov} There exists $\delta>0$ such that
  $
  \sup_{x\in B(x_\star,\delta)}\EE_\xi[\|\nabla f(x,\xi)\|^{4}]<\infty\,.
  $
\item\label{cond-noise-nesterov}
$\tilde\Pi_{\text{u}} \EE_\xi (\nabla f(x_\star, \xi)
- \nabla F(x_\star) ) (\nabla f(x_\star, \xi) - \nabla F(x_\star) )^\T
\tilde\Pi_{\text{u}} \neq 0$, where $\tilde\Pi_{\text{u}}$ is the orthogonal projector on
the eigenspace of $\nabla^2 F(x_\star)$ associated with its negative eigenvalues.
\end{enumerate}
\end{assumption}

\begin{theorem}
  \label{th:avt-application-nesterov}
  Let Assumptions~\ref{hyp:hrpq}, \ref{hyp:model}, \ref{hyp:F_C4_S_C3} and~\ref{hyp:avt-nesterov} hold.
  Define $y_\star = (0, x_\star)$. Let $(y_n =(m_n,x_n):n\in \NN)$ be the random sequence given by Algorithm~\ref{algo-nesterov} with stepsizes satisfying $\sum_n \gamma_n = + \infty$ and~$\sum_n \gamma_n^2 < +\infty$.
  Then, $\PP( [y_n \to y_\star] )= 0$\,.
\end{theorem}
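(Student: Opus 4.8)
The plan is to deduce this result from the general non-autonomous avoidance-of-traps result, Theorem~\ref{th:av-traps}, in essentially the same way that Theorem~\ref{th:avt-application} is obtained, but with the considerable simplification that there is no $v$-variable and no $S$. First I would rewrite Algorithm~\ref{algo-nesterov} in the form required by Theorem~\ref{th:av-traps}. Observe that the iteration
$m_{n+1} = (1-\alpha\gamma_{n+1}/\tau_n)m_n + \gamma_{n+1}\nabla f(x_n,\xi_{n+1})$, $x_{n+1} = x_n - \gamma_{n+1}m_{n+1}$
can be written as $y_{n+1} = y_n + \gamma_{n+1} b(y_n,\tau_n) + \gamma_{n+1}\eta_{n+1} + \gamma_{n+1}\rho_{n+1}$ on $\RR^{2d}$, where $b(y,t) = (\nabla F(x) - (\alpha/t)m, -m)$ with $y=(m,x)$, the martingale increment is $\eta_{n+1} = (\nabla f(x_n,\xi_{n+1}) - \nabla F(x_n), 0)$ (with respect to $\mcF_n = \sigma(\xi_1,\dots,\xi_n)$), and the remainder is $\rho_{n+1} = (0, m_n - m_{n+1})$, exactly paralleling the $\tilde\rho$ term in Section~\ref{subsec-apt-algo-gal}.

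Next I would produce the local linearization \eqref{b-dl-z*} around $y_\star = (0,x_\star)$. Since $\nabla F(x_\star)=0$, Taylor expansion gives $b(y,t) = D(y-y_\star) + e(y,t)$ with
$$
D = \begin{bmatrix} 0 & \nabla^2 F(x_\star) \\ -I_d & 0 \end{bmatrix}, \qquad e(y,t) = \begin{bmatrix} \nabla F(x) - \nabla^2 F(x_\star)(x-x_\star) - (\alpha/t)m \\ 0 \end{bmatrix}.
$$
Note that $e$ is \emph{not} of the exact split form $D(\cdot)+e$ with a time-dependent $c(t)$ term as in Lemma~\ref{lem:lm-lin-g}: here there is no analogue of the $c(t)$ term (because $S$ is absent), so $\rho_n$ in the sense of Theorem~\ref{th:av-traps} is just $\rho_n = (0, m_{n-1}-m_n)$. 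Verifying the hypotheses \ref{e=0}--\ref{dt_eps} of Theorem~\ref{th:av-traps} is then routine using Assumption~\ref{hyp:F_C4_S_C3} ($F\in\mC^3$, hence $\nabla^2 F$ is $\mC^1$ near $x_\star$): $e(y_\star,t)=0$; the partial derivatives of $e$ exist and are continuous; $\partial_1 e(y,t)\to 0$ as $(y,t)\to(y_\star,\infty)$ because $\nabla^2 F(x) - \nabla^2 F(x_\star)\to 0$ and $\alpha/t\to 0$; and the second-order spatial derivatives of $e$ are bounded near $y_\star$ while $\partial_2 e(y,t) = (\alpha m/t^2,0)$ is bounded for $t\geq t_0$, $y$ in a bounded set. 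The key spectral fact to establish is that the eigenvalues of $D$ with positive real part span a subspace of the same dimension as the negative-eigenvalue eigenspace of $\nabla^2 F(x_\star)$: if $\nabla^2 F(x_\star) = O\,\diag(\mu_1,\dots,\mu_d)\,O^\T$, then $D$ is similar to a block-diagonal matrix with $2\times 2$ blocks $\begin{bmatrix} 0 & \mu_i \\ -1 & 0\end{bmatrix}$, whose eigenvalues are $\pm\sqrt{-\mu_i}$; when $\mu_i<0$ these are the real pair $\pm\sqrt{|\mu_i|}$, contributing exactly one eigenvalue with positive real part, and when $\mu_i\geq 0$ they are purely imaginary (or zero). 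Hence $d^+ = \#\{i:\mu_i<0\}>0$ by Assumption~\ref{hyp:avt-nesterov}-\ref{trap-nesterov}, and $Q^{-1}$ can be written explicitly in terms of $O$; moreover the ``unstable'' coordinates $\tilde\eta^+$ are, up to the fixed invertible change of basis $Q^{-1}$, the components of $\nabla f(x_\star,\xi)-\nabla F(x_\star)$ along the negative eigenspace of $\nabla^2 F(x_\star)$, premultiplied by $O^\T$.

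Finally I would check the noise conditions \ref{sum_rho_carre}--\ref{traps-noise} of Theorem~\ref{th:av-traps}. Condition \ref{sum_rho_carre}: on the event that $y_n$ stays in a bounded set $\cW$, $\sum_n\|\rho_{n+1}\|^2\1_{y_n\in\cW} = \sum_n\|m_n - m_{n+1}\|^2\1_{y_n\in\cW}<\infty$ a.s.; indeed $m_n - m_{n+1} = \alpha\gamma_{n+1}m_n/\tau_n - \gamma_{n+1}\nabla f(x_n,\xi_{n+1})$, so $\|m_n-m_{n+1}\|^2 \lesssim \gamma_{n+1}^2(\tau_n^{-2}\|m_n\|^2 + \|\nabla f(x_n,\xi_{n+1})\|^2)$, and taking conditional expectations and summing, $\sum_n\gamma_{n+1}^2 <\infty$ together with the local moment bound Assumption~\ref{hyp:moment-nes} (and the a.s. boundedness that accompanies the event $y_n\in\cW$) gives summability a.s. after a standard argument (Robbins--Siegmund / the fact that a nonnegative series with summable conditional expectations on a confining event converges). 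Conditions \ref{traps-noise-mom} and \ref{traps-noise}: $\EE[\eta_{n+1}\mid\mcF_n]=0$ by the i.i.d.\ structure; $\limsup\EE[\|\eta_{n+1}\|^4\mid\mcF_n]\1_{y_n\in\cW}<\infty$ follows from the uniform eighth... in fact \emph{fourth} moment bound Assumption~\ref{hyp:avt-nesterov}-\ref{moment_appli_avt-nesterov} on a neighborhood of $x_\star$ (choosing $\cW$ inside $B(x_\star,\delta)$); and the nondegeneracy $\liminf\EE[\|\tilde\eta^+_{n+1}\|^2\mid\mcF_n]\1_{y_n\in\cW}\geq c^2\1_{y_n\in\cW}$ for some $c^2>0$ is exactly the content of Assumption~\ref{hyp:avt-nesterov}-\ref{cond-noise-nesterov}: the matrix $\tilde\Pi_{\mathrm u}\,\Cov_\xi(\nabla f(x_\star,\xi))\,\tilde\Pi_{\mathrm u}\neq 0$ guarantees a strictly positive lower bound on the energy of the projection of the noise onto the unstable eigenspace at $x_\star$, and by continuity (using the moment bound for uniform integrability) this lower bound persists, shrunk by a constant factor, for $x_n$ in a small enough $\cW$. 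With all hypotheses of Theorem~\ref{th:av-traps} verified, its conclusion gives $\PP([y_n\to y_\star]) = 0$.

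The main obstacle I anticipate is the careful verification of noise condition \ref{traps-noise} — transferring the nondegeneracy of the limiting covariance at the single point $x_\star$ into a uniform $\liminf$ lower bound over a neighborhood $\cW$ — which requires the continuity of $x\mapsto\EE_\xi[(\nabla f(x,\xi)-\nabla F(x))(\nabla f(x,\xi)-\nabla F(x))^\T]$; this continuity, in turn, is where the uniform moment assumption (a degree strictly above $2$) is genuinely used, via a uniform-integrability argument. Everything else is bookkeeping: the linearization, the elementary spectral computation for the $2\times 2$ blocks of $D$, and the Robbins--Siegmund-type summability of $\|\rho_{n+1}\|^2$ on the confining event.
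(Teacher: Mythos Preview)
Your proposal is correct and follows essentially the same route as the paper: the paper's proof simply says to repeat the argument for Theorem~\ref{th:avt-application} with $D$ replaced by $\tilde D = \begin{bmatrix} 0 & \nabla^2 F(x_\star) \\ -I_d & 0 \end{bmatrix}$, $c(t)\equiv 0$, and $V^{1/2}\nabla^2 F(x_\star)V^{1/2}$ replaced by $\nabla^2 F(x_\star)$ in the spectral lemma, which is exactly what you have spelled out. One small slip: where you invoke Assumption~\ref{hyp:moment-nes} for the summability of $\|\rho_{n+1}\|^2$, you should instead use the local fourth-moment bound of Assumption~\ref{hyp:avt-nesterov}--\ref{moment_appli_avt-nesterov} (which you correctly cite elsewhere), since Assumption~\ref{hyp:moment-nes} is not among the hypotheses of the theorem.
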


\subsection{Related works}

Up to our knowledge, all the avoidance of traps results that can be found in
the literature, starting from \cite{pem-90, bra-duf-96}, refer to stochastic
algorithms that are discretizations of autonomous ODE's
(see for e.g., \cite[Sec.~9]{ben-(cours)99} for general Robbins Monro algorithms and \cite[Sec.~4.3]{mertikopoulos-et-al20} for SGD).
In this line of research, a powerful class of techniques relies on Poincar\'e's invariant
manifold theorem for an autonomous ODE in a neighborhood of some unstable
equilibrium point. In our work, we extend the avoidance of traps results to a
non-autonomous setting, by borrowing a non-autonomous version of Poincar\'e's
theorem from the rich literature that exists on the subject
\cite{dal-krei-(livre)74, klo-ras-(livre)11}.

In \cite{gad-pan-saa18}, the authors succeeded in establishing an avoidance of
traps result for their non-autonomous stochastic algorithm which is close to
our \textsc{S-NAG} algorithm (see the discussion at the end of
Section~\ref{related-1storder-clt} above), at the expense of a sub-Gaussian
assumption on the noise and a rather stringent assumption on the stepsizes.
The main difficulty in the approach of \cite{gad-pan-saa18} lies in the use of
the classical autonomous version of Poincar\'e's theorem (see \cite[Remark
2.1]{gad-pan-saa18}).  This kind of difficulty is avoided by our approach,
which allows to obtain avoidance of traps results with close to minimal
assumptions.
More recently, in the contribution of \cite{gadat-gavra20} discussed
in Sec.~\ref{related-1storder-clt}, the authors establish an avoidance of traps
result (\cite[Th.~3]{gadat-gavra20}) for the algorithm described in Eq.~\eqref{algo-adagrad}
using techniques inspired from \cite{pem-90,ben-(cours)99}. As previously mentioned,
this recent work does not handle momentum and hence neither Algorithm~\ref{algosto} nor Algorithm~\ref{algo-nesterov}.
Moreover, as indicated in our discussion of \cite{gad-pan-saa18}, our strategy of proof is different.

Taking another point of view as concerns the trap avoidance, some recent works
\cite{lee-pan-pil-sim-jor-rec19,du-jin-lee-jor-sin-poc17,jin-ge-net-kak-jor17,pan-pil17,pan-pil-wang19}
address the problem of escaping saddle points when the algorithm is
deterministic but when the initialization point is random.  In contrast to this
line of research, our work considers a stochastic algorithm for which
randomness enters into play at each iteration of the algorithm via noisy
gradients.


\section{Proofs for Section~\ref{sec:odes}}
\label{sec:proofs_odes}

\subsection{Proof of Th.~\ref{th:ode1}}
\label{subsec:prf-th-ode1}

The arguments of the proof of this theorem that we provide here follow the approach of \cite{das-gaz-20} with some small differences. Close arguments can be found in~\cite{barakat-bianchi21}. We provide the proof here for
completeness and in preparation of the proofs that will be related with the
stochastic algorithms.

\subsubsection{Existence and uniqueness}

The following lemma guarantees that the term $\sqrt{\sv(t) + \varepsilon}$
in~\eqref{ode-generale} is well-defined.

\begin{lemma}
\label{lemma:v>0}
Let $t_0 \in \RR_+$ and $T \in (t_0, \infty]$. Assume that there
exists a solution $\sz(t) = (\sv(t), \sm(t), \sx(t))$
to~\eqref{ode-generale} on $[t_0, T)$ for which $\sv(t_0) \geq 0$. Then, for
all $t \in [t_0, T)$, $\sv(t) \geq 0$.
\end{lemma}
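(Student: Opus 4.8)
The plan is to work coordinate by coordinate, since the $\sv$-equation $\dot\sv(t) = \spp(t) S(\sx(t)) - \sq(t)\sv(t)$ decouples into $d$ scalar linear (non-autonomous) ODEs in the components $\sv_i$, with the coupling to $\sm$ and $\sx$ entering only through the known, continuous forcing term $\spp(t) S_i(\sx(t))$. Fix $i \in \{1,\dots,d\}$ and write $a(t) = \sq(t)$ and $b(t) = \spp(t) S_i(\sx(t))$; both are continuous on $[t_0, T)$, and crucially $b(t) \geq 0$ because $S$ is $\RR_+^d$-valued and $\spp \geq 0$. The scalar equation $\dot\sv_i(t) = b(t) - a(t)\sv_i(t)$ can be solved explicitly by the integrating factor $\mu(t) = \exp\bigl(\int_{t_0}^t a(s)\,ds\bigr)$, giving
\[
\sv_i(t) = \mu(t)^{-1}\sv_i(t_0) + \mu(t)^{-1}\int_{t_0}^t \mu(s)\, b(s)\, ds .
\]
Since $\mu(t) > 0$ for all $t$, since $\sv_i(t_0) \geq 0$ by hypothesis (the $i$-th coordinate of $\sv(t_0) \geq 0$), and since the integrand $\mu(s)b(s) \geq 0$, every term on the right-hand side is nonnegative, hence $\sv_i(t) \geq 0$. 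Doing this for each $i$ gives $\sv(t) \geq 0$ componentwise, i.e. $\sv(t) \in \RR_+^d$, for all $t \in [t_0, T)$.

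One small point to verify is that the integrating-factor representation is legitimate on the whole interval: $a$ and $b$ are continuous on $[t_0, T)$, so $\mu$ is $\mC^1$ and the displayed formula is just the unique solution of the linear scalar ODE with the prescribed initial value, valid on any subinterval on which $\sz$ is a solution. Alternatively, and perhaps cleaner for the write-up, one can give a Gr\"onwall-type argument avoiding the explicit solution: the function $w(t) \eqdef \sv_i(t)\exp\bigl(\int_{t_0}^t a\bigr)$ satisfies $\dot w(t) = \mu(t) b(t) \geq 0$, so $w$ is nondecreasing, whence $w(t) \geq w(t_0) = \sv_i(t_0) \geq 0$, and since $\exp(\cdot) > 0$ we conclude $\sv_i(t) \geq 0$.

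I do not expect a genuine obstacle here; the only thing to be careful about is the sign bookkeeping — namely that $S$ takes values in $\RR_+^d$ (Assumption \ref{hyp:S_loclip}) and that $\spp, \sq$ are nonnegative (Assumption \ref{hyp:hrpq}), which is exactly what forces the forcing term $b$ to be nonnegative and the integrating factor to be positive. The argument does not need $\sm$ or $\sx$ beyond continuity of $\sx$, and does not need uniqueness of the solution, only its existence on $[t_0, T)$. The same reasoning applies verbatim to the $\sv$-equation of \eqref{ode-adagrad} in Remark \ref{rmk:simp_ode}, since that equation for $\sv$ is identical.
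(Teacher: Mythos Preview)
Your proof is correct and in fact a bit more direct than the paper's. Both arguments exploit the same underlying structure: the $\sv$-equation is linear in $\sv$ with a nonnegative forcing term $\spp(t)S(\sx(t))$. The paper proceeds by contradiction, introducing the first passage time $\nu = \inf\{t : \sv(t) < 0\}$ and then splitting into the two cases $\sv(t_0) > 0$ (handled by Gronwall, which gives $\sv(t) \geq \sv(t_0)\exp(-\int_{t_0}^t \sq)$ on $[t_0,\nu]$ and contradicts $\sv(\nu)=0$) and $\sv(t_0) = 0$ (where one first finds a nearby time $t_1$ with $\sv(t_1) > 0$ and reduces to the first case). Your integrating-factor argument bypasses the contradiction and the case distinction entirely: writing out the variation-of-constants formula componentwise, every term is manifestly nonnegative, and you are done. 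Your alternative formulation via the monotone auxiliary function $w(t) = \sv_i(t)\mu(t)$ is the same computation repackaged. The only sign hypotheses you need are that $S$ is $\RR_+^d$-valued and $\spp \geq 0$, which are indeed part of the standing assumptions; the paper's proof implicitly uses the same facts.
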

\begin{proof}
Assume that $\nu \eqdef \inf\{ t \in [t_0, T), \, \sv(t) < 0 \}$ satisfies $\nu
< T$.  If $\sv(t_0) > 0$, Gronwall's lemma implies that $\sv(t) \geq \sv(t_0)
\exp(-\int_{t_0}^t q(t))$ on $[t_0, \nu]$ which is in contradiction with the
fact that $\sv(\nu) = 0$. If $\sv(t_0) = 0$, since $\nu < T$, there exists $t_1
\in (t_0, \nu)$ s.t. $\dot\sv(t_1) < 0$. Hence, using the first equation from
\eqref{ode-generale}, we obtain $\sv(t_1) > 0$. This brings us back to the
first case, replacing $t_0$ by~$t_1$.
\end{proof}

Recall that $F_\star = \inf F$ is finite by Assumption~\ref{hyp:F_coerc}.
Of prime importance in the proof will be the energy (Lyapunov) function
$\mE : \RR_+ \times \cZ_+ \to \RR$, defined as
\begin{equation}
  \label{eq:lyap}
\mE(h, z) = h (F(x) - F_\star) +
  \frac{1}{2} \left\| \frac{m}{(v+\varepsilon)^{\odot \frac 14}} \right\|^2\,,
\end{equation}
for every $h \geq 0$ and every $z = (v,m,x) \in \cZ_+$.
This function is slightly different from its analogues that were used in
\cite{alv-00,barakat-bianchi21,das-gaz-20}.

Consider $(t,z)\in (0,+\infty)\times \cZ_+$ and set
$z=(v,m,x)$. Then, using Assumption~\ref{hyp:F_loclip},
we can write
  \begin{align}
    &\partial_t \mE(\sh(t),z) +\ps{\nabla_z \mE(\sh(t),z),g(z,t)}
     \nonumber \\
    &= \dot\sh(t) (F(x) - F_\star)
    -\frac 14 \ps{\frac{m^{\odot 2}}{(v+\varepsilon)^{\odot \frac 32}},\spp(t) S(x) - \sq(t) v}
      \nonumber\\
    &\phantom{=}
    + \ps{\frac{m}{(v+\varepsilon)^{\odot \frac 12}},\sh(t) \nabla F(x) - \sr(t) m}
     - \ps{\frac{m}{(v+\varepsilon)^{\odot \frac 12}}, \sh(t) \nabla F(x)}
     \nonumber \\
    &\leq - \left( \sr(t) - \frac{\sq(t)}{4} \right)
     \left\| \frac{m}{(v+\varepsilon)^{\odot \frac 14}} \right\|^2 
    + \dot\sh(t) (F(x) - F_\star)
     - \frac{\spp(t)}{4} \ps{S(x),
              \frac{m^{\odot 2}}{(v+\varepsilon)^{\odot \frac 32}}} .
\label{lyap-decrease}
  \end{align}

With the help of this function, we can now establish the existence, the uniqueness and the boundedness of the solution of~\eqref{ode-generale}
on~$[t_0,\infty)$ for an arbitrary~$t_0 > 0$.

\begin{lemma}
\label{lm:exist-uniq-bnd}
For each $t_0 > 0$ and $z_0 \in \cZ_+$, \eqref{ode-generale} has a unique
solution on $[t_0,\infty)$ starting at $\sz(t_0) = z_0$. Moreover, the
orbit $\{ \sz(t) \, : \, t\geq t_0 \}$ is bounded.
\end{lemma}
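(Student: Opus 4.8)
The plan is to produce a unique maximal solution via the Cauchy--Lipschitz theorem, to confine it to $\cZ_+$ with the help of Lemma~\ref{lemma:v>0}, to bound its orbit by a bootstrap argument built on the energy function~\eqref{eq:lyap}, and finally to upgrade the maximal solution to a global one through the standard blow-up alternative. For the first step, observe that since $\varepsilon>0$ the map $g$ of~\eqref{eq:gode} extends continuously to the open set $\mathcal U\eqdef\{(z,t)\in\cZ\times\RR:v>-\varepsilon,\ t>0\}$, and that $g(\cdot,t)$ is locally Lipschitz in $z$, uniformly over $t$ in compact subsets of $(0,\infty)$: $\nabla F$ and $S$ are locally Lipschitz by Assumptions~\ref{hyp:F_loclip} and~\ref{hyp:S_loclip}, the map $(v,m)\mapsto m/\sqrt{v+\varepsilon}$ is smooth on $\{v>-\varepsilon\}$, and $\spp,\sq,\sh,\sr$ are continuous on $(0,\infty)$. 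The Cauchy--Lipschitz theorem then gives a unique maximal solution $\sz:[t_0,T_{\max})\to\cZ$ with $\sz(t_0)=z_0$, and since $z_0\in\cZ_+$ implies $\sv(t_0)\geq 0$, Lemma~\ref{lemma:v>0} yields $\sv(t)\geq 0$ on $[t_0,T_{\max})$, so $\sz$ stays in $\cZ_+$.

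The heart of the argument is the a priori bound. Differentiating $t\mapsto\mE(\sh(t),\sz(t))$ along the solution and using~\eqref{lyap-decrease}, each of the three terms on its right-hand side is $\leq 0$: the first because $\sr(t)-\sq(t)/4\geq 0$ by Assumption~\ref{hyp:hrpq}-\ref{hyp:stabode}, the second because $\dot\sh(t)\leq 0$ while $F-F_\star\geq 0$, and the third because $\spp(t)\geq 0$ and $S(\sx(t))$, $\sm(t)^{\odot 2}/(\sv(t)+\varepsilon)^{\odot\frac32}$ are nonnegative componentwise. Hence $\mE(\sh(t),\sz(t))\leq C_0\eqdef\mE(\sh(t_0),z_0)<\infty$ on $[t_0,T_{\max})$. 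As $\sh$ is nonincreasing with limit $h_\infty>0$, one has $\sh(t)\geq h_\infty$, so $F(\sx(t))\leq F_\star+C_0/h_\infty$; coercivity (Assumption~\ref{hyp:F_coerc}) makes this sublevel set bounded, so $\sx(t)$ remains in a fixed compact set $\mathcal K$. The second summand of $\mE$ then gives $\bigl\|\sm(t)/(\sv(t)+\varepsilon)^{\odot\frac14}\bigr\|\leq\sqrt{2C_0}$. To bound $\sv$, note that $S$ is bounded on $\mathcal K$ and $\spp$ is bounded (it converges), so coordinatewise $\dot\sv_i(t)\leq c_1-q_\infty\sv_i(t)$ using $\sq(t)\geq q_\infty>0$; Gronwall's lemma gives $0\leq\sv_i(t)\leq\max(v_{0,i},c_1/q_\infty)$. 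Inserting this into $|\sm_i(t)|\leq\sqrt{2C_0}\,(\sv_i(t)+\varepsilon)^{\frac14}$ bounds $\sm(t)$ as well, so the orbit $\{\sz(t):t\in[t_0,T_{\max})\}$ is bounded.

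If $T_{\max}<\infty$, the blow-up alternative for maximal solutions asserts that the graph $\{(\sz(t),t):t\in[t_0,T_{\max})\}$ lies in no compact subset of $\mathcal U$; but the closure of the bounded orbit times $[t_0,T_{\max}]$ is such a compact subset (since $\sv\geq 0>-\varepsilon$ and $t_0>0$) and contains that graph, a contradiction. Hence $T_{\max}=+\infty$, the solution is global and unique, and the orbit bound just obtained is exactly the second claim. I expect the main obstacle to be this bootstrap: once~\eqref{lyap-decrease} and the sign conditions of Assumption~\ref{hyp:hrpq} show that $\mE(\sh(t),\sz(t))$ is nonincreasing, the energy directly controls only $\sx$ (through coercivity) and the ratio $\sm/(\sv+\varepsilon)^{\odot\frac14}$, not $\sv$ itself, so one must interpose an independent Gronwall estimate for $\sv$ before the bound on $\sm$ can be closed.
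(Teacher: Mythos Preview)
Your proof is correct and follows the same overall strategy as the paper: local existence via Cauchy--Lipschitz, confinement to $\cZ_+$ via Lemma~\ref{lemma:v>0}, monotonicity of $t\mapsto\mE(\sh(t),\sz(t))$ from~\eqref{lyap-decrease}, boundedness of $\sx$ by coercivity, then separate control of $\sv$ and $\sm$ before invoking the blow-up alternative.

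The one noteworthy difference is in how $\sm$ is bounded. The paper proceeds in two passes: it first shows $T_{\max}=\infty$ using a crude Gronwall bound on $\sm$ (and $\sv$) valid only on finite intervals, and then, once globality is secured, bounds $\sm$ on $[t_0,\infty)$ via the variation-of-constants formula $\sm(t)=e^{-R(t)}\sm(t_0)+\int_{t_0}^t e^{R(u)-R(t)}\sh(u)\nabla F(\sx(u))\,du$ together with $\sr\geq r_\infty>0$. You instead close the loop directly from the energy: the bound $\bigl\|\sm/(\sv+\varepsilon)^{\odot\frac14}\bigr\|\leq\sqrt{2C_0}$ combined with your Gronwall estimate on $\sv$ already yields a uniform bound on $\sm$ on the whole maximal interval, so globality and boundedness come out in a single step. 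Your route is slightly more economical and avoids the variation-of-constants computation; the paper's route has the minor advantage of giving an explicit bound on $\|\sm\|$ in terms of $\sup\|\nabla F(\sx)\|$ and $r_\infty$, independent of the size of $\sv$.
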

\begin{proof}
Let $t_0 > 0$, and fix $z_0 \in \cZ_+$. On each set of the type $[t_0, t_0 + A]
\times \bar B(z_0, R)$ where $A, R > 0$ and $\bar B(z_0, R) \subset
(-\varepsilon, \infty)^d \times\RR^d\times\RR^d$, we easily obtain from our
assumptions that the function $g$ defined in~\eqref{eq:gode} is continuous, and
that $g(\cdot, t)$ is uniformly Lipschitz on $t\in [t_0, t_0 + A]$. In these
conditions, Picard's theorem asserts that \eqref{ode-generale} starting from
$\sz(t_0) = z_0$ has a unique solution on a certain maximal interval $[t_0,
T)$. Lem.~\ref{lemma:v>0} shows that $\sv(t) \geq 0$ on this interval.

Let us show that $T = \infty$. Applying Ineq.~\eqref{lyap-decrease} with
$(v,m,x) = (\sv(t), \sm(t), \sx(t))$ and using
Assumption~\ref{hyp:hrpq}, we obtain that the function $t\mapsto \mE(\sh(t),
\sz(t))$ is decreasing on $[t_0, T)$. By the coercivity of $F$
(Assumption~\ref{hyp:F_coerc}) and Assumption~\ref{hyp:hrpq}--\ref{hyp:h}, we
get that the trajectory $\{\sx(t)\}$ is bounded.  Recall the equation $\dot
\sm(t) = \sh(t) \nabla F(\sx(t)) - \sr(t) \sm(t)$.  Using the continuity of the
functions $\nabla F$, $\sh$ and $\sr$ along with Gronwall's lemma, we get that
$\{ \sm(t) \}$ is bounded if $T < \infty$. We can show a similar result for $\{
\sv(t) \}$.  Thus, $\{ \sz(t) \}$ is bounded on $[t_0, T)$ if $T < \infty$
which is a contradiction, see, \emph{e.g.}, \cite[Cor.3.2]{har-(livre)02}.

It remains to show that the trajectory $\{ \sz(t) \}$ is bounded. To that end,
let us apply the variation of constants method to the equation
$\dot \sm(t) = \sh(t) \nabla F(\sx(t)) - \sr(t) \sm(t)$.
Writing $R(t)=\int_{t_0}^t \sr(u) \, du$, we get that
   \[
  \frac{d}{dt} \left( e^{R(t)} \sm(t) \right)
    = e^{R(t)} \sh(t) \nabla F(\sx(t)) .
  \]
  Therefore, for every $t \geq t_0$\,,
  \[
  \sm(t) = e^{-R(t)} \sm(t_0) + \int_{t_0}^t e^{R(u)-R(t)}
     \sh(u) \nabla F(\sx(u)) du \,.
  \]
  Using the continuity of $\nabla F$ together with the boundedness of $\sx$,
  Assumption~\ref{hyp:hrpq} and the triangle inequality, we obtain the
  existence of a constant $C > 0$ independent of $t$ s.t.
  \begin{multline*}
  \norm{\sm(t) - \sm(t_0)} - \norm{\sm(t_0)}
    \leq  C \sh(t_0) \int_{t_0}^t e^{-\int_u^t \sr(s)\, ds} du\\
    \leq C \sh(t_0) \int_{t_0}^t e^{-r_\infty(t-u)} du
    \leq  \frac{C \sh(t_0)}{r_\infty}\,.
  \end{multline*}
  The same reasoning applies to $\sv(t)$ using the continuity of $S$ and
  Assumption~\ref{hyp:hrpq}.
  This completes the proof.
\end{proof}

We can now extend this solution to $t_0 = 0$ along the approach of
\cite{das-gaz-20}, where the detailed derivations can be found. The idea is to
replace $\sh(t)$ with $\sh(\max(\eta,t))$ for some $\eta > 0$ and to do the
same for $\spp$, $\sq$, and $\sr$. It is then easy to see that the ODE that is
obtained by doing these replacements has a unique global solution on $\RR_+$.
By making $\eta\to 0$ and by using the Arzel\`a-Ascoli theorem along with
Assumption~\ref{hyp:arzela-ascoli}, we obtain that~\eqref{ode-generale} has
a unique solution on $\RR_+$.

\subsubsection{Convergence}\label{sec:ode_conv}

The first step in this part consists in transforming \eqref{ode-generale} into
an autonomous ODE by including the time variable into the state vector. More
specifically, we start with the following ODE:
\[
\begin{bmatrix} \dot \sz(t) \\ \dot u(t) \end{bmatrix} =
\begin{bmatrix}  g(\sz(t), u(t)) \\ 1 \end{bmatrix}
\quad \text{with} \quad
\begin{bmatrix}  \sz(0) \\ u(0) \end{bmatrix} =
\begin{bmatrix}  z_0 \\ t_0 \end{bmatrix} ,
\]
then, we perform the following change of variable in time
\[
\begin{bmatrix}  z \\ u \end{bmatrix} \mapsto
\begin{bmatrix}  z \\ s  = 1/u\end{bmatrix}
\]
allowing the solution to lie in a compact set.

We initialize the above ODE at a time instant $t_0 > 0$.
Define the functions $\sH, \sR, \sP, \sQ : \RR_+ \to \RR_+$ by setting
$\sH(s) = \sh(1/s)$, $\sR(s) = \sr(1/s)$, $\sP(s) = \spp(1/s)$;
$\sQ(s) = \sq(1/s)$ for $s > 0$;
$\sH(0) = h_\infty$, $\sR(0) = r_\infty$, $\sP(0) = p_\infty$ and
$\sQ(0) = q_\infty$.
Our autonomous dynamical system can then be described by the following system of equations:
\begin{equation}
     \begin{cases}
       \dot \sv(t) &= \sP(\sss(t)) S(\sx(t)) - \sQ(\sss(t)) \sv(t)\\
       \dot \sm(t) &= \sH(\sss(t)) \nabla F(\sx(t)) - \sR(\sss(t)) \sm(t)\\
       \dot \sx(t) &= - \frac{\sm(t)}{\sqrt{\sv(t) + \varepsilon}}\\
       \dot \sss(t)&= - \sss(t)^2
     \end{cases}
\label{odeauton}
\end{equation}
Since the solution  of the ODE $\dot \sss(t) = - \sss(t)^2$ for which
$\sss(t_0) = 1/t_0$ is $\sss(t) = 1/t$, the trajectory $\{\sss(t)\}$ is
bounded.  The three remaining equations are a reformulation
of~\eqref{ode-generale} for which the trajectories have already been shown to
exist and to be bounded in Lem.~\ref{lm:exist-uniq-bnd}.  In the sequel, we
denote by $\Phi : \cZ_+ \times \RR_+ \to \cZ_+ \times \RR_+$ the semiflow
induced by the autonomous ODE~\eqref{odeauton}, \emph{i.e.}, for every $u =
(z,s) \in \cZ_+ \times \RR_+$, $\Phi(u, \cdot)$ is the unique global solution
to the autonomous ODE~\eqref{odeauton} initialized at $u$. Observe that the
orbits of this semiflow are precompact. Moreover, the function
$\Phi((z,0), \cdot)$ is perfectly defined for each $z\in \cZ_+$ since the
associated solution satisfies the ODE \eqref{ode-infty-extended} defined below,
which three first equations satisfy the hypotheses of
Lem.~\ref{lm:exist-uniq-bnd}.

Consider now a continuous function $V : \cZ_+ \times \RR_+ \to \RR$ defined by:
\[
V(u) = \mE\left( \sH(s), z\right),
 \quad u = (z,s) \in \cZ_+ \times (0,\infty).
\]
As for Ineq.~\eqref{lyap-decrease} above, we have here that
\begin{multline*}
\frac{d}{dt} V\left(\Phi(u,t)\right) \leq
    - \left( \sr(t) - \frac{\sq(t)}{4} \right)
     \left\| \frac{\sm(t)}{(\sv(t)+\varepsilon)^{\odot \frac 14}} \right\|^2
     \\
    + \dot\sh(t) (F(\sx(t)) - F_\star)
     - \frac{\spp(t)}{4} \ps{S(\sx(t)),
              \frac{\sm(t)^{\odot 2}}{(\sv(t)+\varepsilon)^{\odot \frac 32}}}
\end{multline*}
if $s > 0$, and the same inequality with $(\dot\sh(t), \spp(t), \sr(t),
 \sq(t))$ being replaced with $(0, p_\infty, r_\infty, q_\infty)$ otherwise.

Since $V\circ \Phi(u,\cdot)$ is non-increasing and nonnegative, we can define
$V_\infty \eqdef \lim_{t\to\infty} V(\Phi(u, t))$.  Let $\omega(u) \eqdef
\bigcap_{s>0} \overline{\bigcup_{t \geq s} \Phi(u,t)} $ be the $\omega$-limit
set of the semiflow $\Phi$ issued from $u$. Recall that $\omega(u)$ is an
invariant set for the flow $\Phi(u,\cdot)$, and that
\[
\dist(\Phi(u,t), \omega(u)) \xrightarrow[t\to\infty]{} 0,
\]
see, \emph{e.g.}, \cite[Th.~1.1.8]{har-(livre)91}). In order to finish the
proof of Th.~\ref{th:ode1}, we need to make explicit the structure of
$\omega(u)$.

We know from La Salle's invariance principle that
$\omega(u) \subset V^{-1}(V_\infty)$. In particular,
\begin{equation}
\forall y \in \omega(u), \ \forall t \geq 0, \
  V(\Phi(y,t)) = V(y) = V_\infty
\label{eq:invariance}
\end{equation}
by the invariance of $\omega(u)$.

From ODE~\eqref{odeauton}, we have that any $y \in \omega(u)$ is of the form $y
= (z, 0)$ since $s(t) \to 0$. As a consequence, $\Phi(y,\cdot)$ is a
solution to the autonomous ODE
\begin{equation}
\begin{cases}
    \dot \sv(t) &= p_\infty S(\sx(t)) - q_\infty \sv(t) \\
    \dot \sm(t) &= h_\infty \nabla F(\sx(t)) - r_\infty \sm(t) \\
    \dot \sx(t) &= - \frac{\sm(t)}{\sqrt{\sv(t) + \varepsilon}}  \\
    \dot \sss(t) &= 0\,.
\end{cases}
\label{ode-infty-extended}
\end{equation}
The three first equations can be written in a more compact form :
\begin{equation}
\label{odeinfty}
\dot\sz(t) = g_\infty(\sz(t))
\end{equation}
where $\sz(t)= (\sv(t), \sm(t), \sx(t))$, and
\[
g_\infty(z) = \lim_{t \to \infty} g(z,t)
= \begin{bmatrix}
   p_\infty S(x) - q_\infty v \\
   h_\infty \nabla F(x) - r_\infty m \\
   - {m} / {\sqrt{v+\varepsilon}} \end{bmatrix}
\]
for each $z \in \cZ_+$. Consider $y = (v, m, x, 0) \in \omega(u)$.
Using Eq.~\eqref{eq:invariance}, we obtain that $d V(\Phi(y,t)) / dt = 0$,
which implies that
\[
  \left( r_\infty - \frac{q_\infty}{4} \right)
   \left\| \frac{\sm(t)}{(\sv(t)+\varepsilon)^{\odot \frac 14}} \right\|^2
     + \frac{p_\infty}{4} \ps{S(\sx(t)),
              \frac{\sm(t)^{\odot 2}}{(\sv(t)+\varepsilon)^{\odot \frac 32}}} = 0
\]
for all $(\sv(t), \sm(t), \sx(t), 0) = \Phi(y, t)$.  As a
consequence, Assumption~\ref{hyp:hrpq}-\ref{hyp:stabode} gives $\sm(t) = m =
0$, and then, $\sx(t) = x$ for some $x$ s.t. $\nabla F(x) = 0$ using
ODE~\eqref{ode-infty-extended}.  We now turn to showing that $\sv(t) = v =
p_\infty S(x) / q_\infty$.  We have proved so far that any element $y \in
\omega(u)$ is written $y = (v, 0, x, 0)$ where $\nabla F(x)
= 0$.  The component $\sv(\cdot)$ of $\Phi(y, \cdot)$ is a solution to the ODE
$\dot\sv(t) = p_\infty S(x) - q_\infty \sv(t)$ and is thus written
\begin{equation}
\label{vdec}
\sv(t) = \frac{p_\infty S(x)}{q_\infty} +
 e^{-q_\infty t}  \Bigl( v - \frac{p_\infty S(x)}{q_\infty} \Bigr) .
\end{equation}
Fixing $x$, let $\mS_x$ be the section of $\omega(u)$ defined by:
\[
\mS_x \omega(u) = \left\{ y \in \omega(u) \, :\,
  y = (\tilde v, 0, x, 0)\,,\, \tilde v \in \RR_+^d \right\} .
\]
As $\omega(u)$ is invariant, we have $\mS_x\omega(u) = \mS_x \Phi(\omega(u),
t)$ for all $t \geq 0$. Since the set $\{ \tilde v \in \RR_+^d \,\text{s.t.}\,
(\tilde v, 0, x, 0) \in \mS_x \omega(u)\}$ lies in a
compact, we deduce from Eq.~\eqref{vdec} that this set is reduced to the
singleton $\{p_\infty S(x) / q_\infty\}$ and in particular $v = p_\infty S(x) /
q_\infty$.  Therefore, the union of $\omega$-limit sets of the semiflow $\Phi$
induced by ODE~\eqref{odeauton} coincides with the set of equilibrium points of
this semiflow. The latter set itself corresponds to the set of points $(z,0)$ s.t. $z\in \zer g_\infty$. It remains to notice that
$\Upsilon = \zer g_\infty$ to finish the proof.

\begin{remark} Commenting on Remark~\ref{rmk:simp_ode}, the same proof works
for~\eqref{ode-adagrad} by using the function $F-F_\star$ as a Lyapunov
function. The corresponding limit set (as $t\to+\infty$) is then of the form
\[
\{ \tilde z_\infty = (\tilde v_\infty, \tilde
x_\infty) \in \RR_{+}^d \times \RR^d \, : \, \nabla F(\tilde x_\infty) = 0\,,
\tilde v_\infty = p_\infty S(\tilde x_\infty) / q_\infty\}.
\]
Similarly, if we set $\spp = \sq \equiv 0$ in \eqref{ode-generale} and we keep what remains in
Assumption~\ref{hyp:hrpq}, the function
$\sh(t) (F(x) - F_\star) + \frac{1}{2} \| m\|^2$ works as a Lyapunov function,
and the limit set has the form $\{ (0, x) : \nabla F(x) = 0\}$.
\end{remark}

\subsection{Proof of Th.~\ref{th:ode-nesterov}}\label{sec:proof_simp}

The existence and the uniqueness of the solution to~\eqref{ode-true-nesterov} have been shown in the literature. We refer to \cite[Prop.~2.1-2.2.c)]{cab-eng-gad09} for an identical statement of this result and \cite[Th.~1, Appendix~A]{su_boyd_candes2016} for a complete proof. The boundedness of the solution follows immediately from the coercivity of $F$ together with the fact that the function $t \mapsto F(\sx(t)) + \frac 12 \|\sm(t)\|^2$ is nonincreasing.

Concerning the convergence statement, our proof is based on comparing the solutions of~\eqref{ode-true-nesterov}
to the solutions of the ODE in \cite[Eq.~(2.3)]{gad-pan-saa18}.
We first note that under a change of variable,
a solution to~\eqref{ode-true-nesterov} gives a solution to \cite[Eq.~(2.3)]{gad-pan-saa18}.

\begin{lemma}\label{lm:ch_var}
  Let $(\sm, \sx)$ be a solution to~\eqref{ode-true-nesterov}. Define $\sy(t) =  \frac{\kappa \sm\left(\kappa \sqrt{t}\right)}{2\sqrt{t}}$\,, $\su(t) =  \sx\left(\kappa\sqrt{t}\right)$\,, with $\kappa = \sqrt{2 \alpha + 2}$ and $\beta = \frac{\kappa^2}{4}$.
  Then, $(\sy, \su)$ verifies
  \begin{equation}\label{eq:odeGad}
    \begin{cases}
      \dot{\sy}(t) &= \frac{\beta}{t}( \nabla F (\su(t))) - \sy(t)) \\
      \dot{\su}(t) &= -\sy(t)\,.
    \end{cases}
  \end{equation}
\end{lemma}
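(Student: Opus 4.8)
The plan is to verify the two lines of~\eqref{eq:odeGad} by a direct change of variables, using nothing more than the chain rule and the defining relations of~$\sy$ and~$\su$. Throughout I would write $s = \kappa\sqrt{t}$, so that $\tfrac{ds}{dt} = \tfrac{\kappa}{2\sqrt{t}}$, and record that by construction $\su(t) = \sx(s)$ and $\sy(t) = \tfrac{\kappa}{2\sqrt{t}}\,\sm(s)$. Since $\nabla F$ is locally Lipschitz (Assumption~\ref{hyp:F_loclip}), the solution $(\sm,\sx)$ of~\eqref{ode-true-nesterov} is $\mathcal C^1$, and $t\mapsto \kappa\sqrt t$ is smooth on $(0,\infty)$, so $(\sy,\su)$ is automatically $\mathcal C^1$ on $(0,\infty)$ and the manipulations below are legitimate.

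I would first dispose of the second equation, which involves no constants: differentiating $\su(t) = \sx(\kappa\sqrt{t})$ and using $\dot\sx = -\sm$ gives $\dot\su(t) = -\sm(s)\,\tfrac{\kappa}{2\sqrt{t}} = -\sy(t)$, which is exactly the second line of~\eqref{eq:odeGad}.

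For the first equation I would differentiate $\sy(t) = \tfrac{\kappa}{2\sqrt{t}}\,\sm(s)$ with the product rule. The derivative of the prefactor contributes $-\tfrac{1}{2t}\sy(t)$, and the derivative of $\sm(s)$ contributes $\tfrac{\kappa^2}{4t}\,\dot\sm(s)$; substituting $\dot\sm(s) = \nabla F(\sx(s)) - \tfrac{\alpha}{s}\sm(s)$ and using $s = \kappa\sqrt t$ and $\beta = \kappa^2/4$ turns this into $\tfrac{\beta}{t}\nabla F(\su(t)) - \tfrac{\alpha}{2t}\sy(t)$. Collecting terms yields $\dot\sy(t) = \tfrac{\beta}{t}\nabla F(\su(t)) - \tfrac{1+\alpha}{2t}\sy(t)$.

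The last step is to invoke the precise normalization $\kappa = \sqrt{2\alpha+2}$, equivalently $\beta = \kappa^2/4 = (\alpha+1)/2$: this makes $\tfrac{1+\alpha}{2t} = \tfrac{\beta}{t}$, so the previous identity collapses to $\dot\sy(t) = \tfrac{\beta}{t}\bigl(\nabla F(\su(t)) - \sy(t)\bigr)$, which is the first line of~\eqref{eq:odeGad}. There is no genuine obstacle in this lemma; the only thing to watch is that the coefficient $\beta$ dressing $\nabla F$ and the coefficient appearing in the friction term coincide, and this is exactly the algebraic identity that the choice of $\kappa$ (equivalently $\beta = \kappa^2/4$) is designed to guarantee.
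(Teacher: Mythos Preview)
Your proof is correct and follows essentially the same approach as the paper: both verify~\eqref{eq:odeGad} by direct differentiation via the chain rule, substituting the defining relations of~\eqref{ode-true-nesterov} and using $\beta=\kappa^2/4=(\alpha+1)/2$ to match the coefficients. Your write-up is slightly more explicit about isolating the step where the normalization $\kappa=\sqrt{2\alpha+2}$ is invoked, but the argument is the same.
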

\begin{proof}
By simple differentiation, we get:
\begin{equation*}
  \dot{\sy}(t) = \frac{\beta}{t} \left[\nabla F\left(\sx  (\kappa \sqrt{t})\right) - \frac{\alpha}{\kappa \sqrt{t}}  \sm\left(\kappa\sqrt{t}\right)\right] - \frac{\kappa}{4 t^{\frac 32}}\sm\left(\kappa\sqrt{t}\right)
      = \frac{\beta}{t}\left( \nabla F( \su(t)) -  \sy(t)\right),
\end{equation*}
\begin{equation*}
  \dot{\su}(t) = -\frac{\kappa}{2 \sqrt{t}} \sm\left(\kappa \sqrt{t}\right) =  - \sy(t) \,.
\end{equation*}
\end{proof}

Consider a solution $(\sm, \sx)$ of~\eqref{ode-true-nesterov} starting at
$(m_0, x_0) \in \RR^d \times \RR^d$. As in Section~\ref{sec:ode_conv},
for every $t_0>0$, on $[t_0, +\infty)$, we have that $(\sm, \sx, \sss)$ is a
solution to the autonomous ODE
\begin{equation}\label{eq:odeNest_aut}
  \begin{cases}
    \dot{\sm}(t) &= \nabla F(\sx(t)) - \alpha \sss(t)\sm(t) \\
     \dot{\sx}(t) &= - \sm(t)\\
    \dot{\sss}(t) &= - \sss(t)^2\,,
  \end{cases}
\end{equation}
starting at $(m_0, x_0, 1/t_0)$. Denote by $\Phi_N = (\Phi_N^m, \Phi_N^x, \Phi_N^s)$ the semiflow induced by ODE~\eqref{eq:odeNest_aut} and $\omega_N((m_0, x_0, 1/t_0))$ its limit set.

Define $(\sy, \su)$ as in Lem.~\ref{lm:ch_var}. Starting at $(\sy(t_0), \su(t_0), 1/t_0)$, we also have that $(\sy, \su, \sss)$ is a solution on $[t_0, + \infty)$ to the ``autonomized'' Heavy-Ball ODE

\begin{equation}\label{eq:odeGad_aut}
  \begin{cases}
    \dot{\sy}(t) &= \beta \sss(t)( \nabla F (\su(t))) - \sy(t)) \\
     \dot{\su}(t) &= - \sy(t)\\
    \dot{\sss}(t) &= - \sss(t)^2\,.
  \end{cases}
\end{equation}

Denote by $\Phi_H =  (\Phi_H^y, \Phi_H^u, \Phi_H^s)$ the semiflow induced by ODE~\eqref{eq:odeGad_aut} and $\omega_H((\sy(t_0), \su(t_0), 1/t_0))$ its limit set.

\begin{lemma}\label{lm:rel_comp}
  For any compact set $K \subset \RR^{2d+1}$ and any $T > 0$, the family of functions
  $\left\{\Phi(z, \cdot) : [0,T] \to \RR^{2d+1} \right\}_{z \in K}\,,$
   where $\Phi$ is either $\Phi_H$ or $\Phi_N$, is relatively compact in $(\mathcal{C}^0([0,T], \RR^{2d +1}), \lVert \cdot\rVert_{\infty})$.
\end{lemma}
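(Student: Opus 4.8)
The plan is to apply the Arzel\`a-Ascoli theorem. Writing $\Phi$ for either $\Phi_H$ or $\Phi_N$, I will first show that the solutions issued from points of $K$ all remain in a fixed compact subset $\widetilde K\subset\RR^{2d+1}$ on $[0,T]$ (equiboundedness, which also yields global existence on $[0,T]$, no blow-up being possible, exactly as in Lem.~\ref{lm:exist-uniq-bnd}); this will force $\{\Phi(z,\cdot)\}_{z\in K}$ to be uniformly Lipschitz on $[0,T]$, and relative compactness in $(\mathcal{C}^0([0,T],\RR^{2d+1}),\|\cdot\|_\infty)$ will follow. I take $K$ to have its last coordinate in $\RR_+$, which is the only case relevant here: along both \eqref{eq:odeNest_aut} and \eqref{eq:odeGad_aut} the $\sss$-component solves $\dot\sss=-\sss^2$, whose solution $\sss(t)=\sss_0/(1+\sss_0 t)$ stays in $\RR_+$ and satisfies $0\le\sss(t)\le\sss_0$ on $[0,T]$ (for negative initial data it would blow up in finite time).

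For the equiboundedness of the remaining coordinates of $\Phi_N$, I will use the energy $E(t)=F(\sx(t))+\frac12\|\sm(t)\|^2$. Differentiating along \eqref{eq:odeNest_aut} and using $\sss\ge 0$ gives $\dot E=-\alpha\,\sss(t)\|\sm(t)\|^2\le 0$, so $F(\sx(t))\le E(0)$ and $\|\sm(t)\|^2\le 2(E(0)-F_\star)$; since $E(0)=F(x_0)+\frac12\|m_0\|^2$ depends continuously on $z$ it is bounded over $K$, and coercivity of $F$ (Assumption~\ref{hyp:F_coerc}) then confines $\sx(t)$ to a fixed ball. For $\Phi_H$, when $\sss_0>0$ I will use the auxiliary function $V(t)=\frac{1}{2\beta\sss(t)}\|\sy(t)\|^2+F(\su(t))$; using $\dot\sss=-\sss^2$ one computes $\dot V=-\bigl(1-\frac{1}{2\beta}\bigr)\|\sy(t)\|^2$, which is $\le 0$ since $\beta=(\alpha+1)/2>\frac12$. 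Hence $\|\sy(t)\|^2\le 2\beta\,\sss(t)(V(0)-F_\star)\le\|y_0\|^2+2\beta\,\sss_0(F(u_0)-F_\star)$, a quantity that stays bounded as $\sss_0\downarrow 0$ and is bounded over $K$; the degenerate slice $\sss_0=0$ is immediate, since there $\dot\sy=0$ and $\dot\su=-\sy$. Then $\|\su(t)\|\le\|u_0\|+\int_0^t\|\sy(\sigma)\|\,d\sigma$, so $\su$ too is confined to a fixed ball.

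With $\widetilde K$ in hand, the right-hand side of \eqref{eq:odeNest_aut} (resp.\ \eqref{eq:odeGad_aut}) is continuous, hence bounded by some $M<\infty$ on $\widetilde K$; therefore $\bigl\|\frac{d}{dt}\Phi(z,t)\bigr\|\le M$ for all $z\in K$ and $t\in[0,T]$, whence $\|\Phi(z,t)-\Phi(z,t')\|\le M|t-t'|$. The family $\{\Phi(z,\cdot)\}_{z\in K}$ is thus uniformly bounded and uniformly equicontinuous on $[0,T]$, and Arzel\`a-Ascoli yields the claimed relative compactness.

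The one genuinely delicate point is the equiboundedness of $\Phi_H$: the natural Lyapunov function $V$ carries a factor $1/\sss$ and degenerates as $\sss_0\downarrow 0$. The remedy is not to bound $F(\su)$ directly from $V$, but to extract from $\dot V\le 0$ a bound on $\|\sy\|$ alone — which remains finite in that limit and depends continuously on $z$ — and then to bound $\su$ by integrating $\dot\su=-\sy$.
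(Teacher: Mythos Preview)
Your proof is correct but takes a different route from the paper. The paper's argument is two lines: the semiflow $\Phi:\RR^{2d+1}\times\RR_+\to\RR^{2d+1}$ is continuous (by standard continuous dependence on initial data), hence uniformly continuous on the compact set $K\times[0,T]$; Arzel\`a--Ascoli then applies directly, since uniform continuity on $K\times[0,T]$ yields both equiboundedness (the image of a compact set is compact) and equicontinuity in $t$ uniformly over $z\in K$. You instead work from first principles with Lyapunov functions tailored to each of \eqref{eq:odeNest_aut} and \eqref{eq:odeGad_aut}, first confining all trajectories from $K$ to a fixed compact $\widetilde K$, then bounding the vector field on $\widetilde K$ to obtain a uniform Lipschitz constant in $t$.

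Your approach is longer but more self-contained: it explicitly establishes global existence on $[0,T]$ (no blow-up), justifies the restriction $\sss_0\ge 0$ needed for the semiflow to make sense, and handles the degeneracy of the $\Phi_H$ energy as $\sss_0\downarrow 0$ by extracting a bound on $\|\sy\|$ alone and integrating for $\su$. The paper's approach is cleaner but tacitly assumes the semiflow is already globally defined and continuous on $K\times[0,T]$ --- which, strictly speaking, requires an a priori bound of exactly the sort you prove.
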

\begin{proof}
  The map $\Phi : \RR^{2d +1} \times \RR_{+} \rightarrow \RR^{2d +1} $ is continuous, hence uniformly continuous on $K \times [0, T]$. The result follows from the application of the Arzel\`a-Ascoli theorem to the family $\left\{\Phi(z, \cdot) : [0,T] \to \RR^{2d+1} \right\}_{z \in K}$.
\end{proof}

Let $(m, x, 0) \in \omega_N((m_0, x_0, 1/t_0))$. There exists a sequence $(t_k)$ of nonnegative reals such that $(m, x, 0) = \lim_{k \rightarrow \infty} (\sm(t_k), \sx(t_k), 1/t_k)$. For any $T>0$\,, using Lem.~\ref{lm:rel_comp}, up to an extraction, we can say that the sequence of functions
$\{\Phi_N((\sm(t_k), \sx(t_k), 1/t_k), \cdot)\}_k$ converges towards $(\tilde{\sm}, \tilde{\sx}, 0)$ in $\mathcal{C}^0([0, T], \RR^d)$, where $(\tilde{\sm}, \tilde{\sx})$ is a solution to
\begin{equation}\label{eq:lim_nest}
  \begin{cases}
    \dot{\tilde{\sm}}(t) &= \nabla F(\tilde{\sx}(t))\\
    \dot{\tilde{\sx}}(t) &= - \tilde{\sm}(t)\,,\\
    \end{cases}
\end{equation}
with $ (\tilde{\sm}(0), \tilde{\sx}(0)) = (m,x)$.
Moreover, by Lem.~\ref{lm:ch_var}, we also have that:
  \begin{multline}
    \label{eq:conv_ch_var}
      \sup_{h \in [0, T^2/\kappa^2]} \norm{ \tilde{\sx}(\kappa\sqrt{h}) - \Phi_N^x((\sm(t_k), \sx(t_k), 1/t_k), \kappa \sqrt{h})}\\
       = \sup_{h \in [0, T^2/\kappa^2]} \norm{ \tilde{\sx}(\kappa\sqrt{h}) - \Phi_H^u((\sm(t_k), \sx(t_k), 1/t_k), h)} \xrightarrow[k\to+\infty]{} 0\,.
  \end{multline}

Using Lem.~\ref{lm:rel_comp}, up to an additional extraction, we get on $\mathcal{C}^0([0, T^2/\kappa^2], \RR^{2d +1})$ that $\{\Phi_H((\sx(t_k), \sm(t_k), 1/t_k), \cdot)\}_k$ converges to $(\su, \sy, 0)$, where $(\su, \sy)$ is a solution to
\begin{equation}\label{eq:lim_gad}
  \begin{cases}
    \dot{\sy}(t) &=  0\\
    \dot{\su}(t) &= - \sy(t)\,.
    \end{cases}
\end{equation}

Therefore, $\su(t) = A + Bt $ for some $A$ and $B$ in $\RR^{d}$.
Imagine that $B \neq 0$. We previously proved that $\sx$ (and therefore $\su$) is bounded by some constant $C > 0$. Let $T' > \frac{C + \|A\|}{\|B\|}$. Up to an extraction, we obtain that $\{\Phi_H((\sx(t_k), \sm(t_k), 1/t_k), \cdot)\}_k$ converges to $\su'$ on $\mathcal{C}^0([0, T'], \RR^{2d +1})$, with $\su'(t) = A' + B't$ for some $A'$ and $B'$ in $\RR^{d}$. We then have by uniqueness of the limit that $A' = A$ and $B' = B$.
 As a consequence, $\norm{\su'(T')}  = \norm{A + B T'} > C$ and we obtain a contradiction. Hence~$B = 0$.

 This implies that $\su$ is constant. Then, if we go back to Eqs.~\eqref{eq:conv_ch_var} and~\eqref{eq:lim_nest}, we get that $\tilde{\sx}$ is constant, hence $\tilde{\sm} \equiv 0$ and then $\nabla F(\tilde{\sx}) \equiv 0$. In particular, this means that $m = \tilde{\sm}(0) = 0$ and $\nabla F(x) = \nabla F(\tilde{\sx}(0))= 0$.


\section{Proofs for Section~\ref{sec:as_convergence}}
\label{sec:proof_as_convergence}

 \subsection{Preliminaries}

We first recall some useful definitions and results.  Let $\Psi$ represent any
semiflow on an arbitrary metric space $(E,\sd)$. As in the previous section,
a point $z\in E$ is called an
equilibrium point of the semiflow $\Psi$ if $\Psi(z,t)=z$ for all $t\geq
0$.  We denote by $\Lambda_\Psi$ the set of equilibrium points of~$\Psi$.  A
continuous function $\sV:E\to\bR$ is called a Lyapunov function for the
semiflow $\Psi$ if $\sV(\Psi(z,t))\leq \sV(z)$ for all $z\in E$ and all $t\geq
0$.  It is called a \emph{strict} Lyapunov function if, moreover, $ \{ z\in
E\,:\, \forall t\geq 0,\,\sV(\Psi(z,t))=\sV(z) \}= \Lambda_\Psi $.  If $\sV$ is
a strict Lyapunov function for~$\Psi$ and if $z\in E$ is a point s.t.
$\{\Psi(z,t):t\geq 0\}$ is relatively compact, then it holds that
$\Lambda_\Psi\neq \varnothing$ and $\sd(\Psi(z,t),\Lambda_\Psi)\to 0$, see
\cite[Th.~2.1.7]{har-(livre)91}.  A continuous function $z:[0,+\infty)\to E$ is
said to be an asymptotic pseudotrajectory (APT, \cite{ben-hir-96}) for the
semiflow $\Psi$ if $ \lim_{t\to+\infty}
\sup_{s\in [0,T]} \sd(z(t+s),\Psi(z(t),s)) = 0\,$ for every $T\in (0,+\infty)$\,.


\subsection{Proof of Th.~\ref{th:as_conv_under_stab}}\label{proof:th_as_conv}

Recall that $\Phi$ is the semiflow induced by the autonomous ODE~\eqref{odeauton} which is an ``autonomized'' version of our initial \eqref{ode-generale}. In the remainder of this section, the proof will be divided into two main steps : (a) we show that a certain continuous-time linearly interpolated process
constructed from the iterates of our algorithm~\ref{algosto} is an APT of~$\Phi$; (b) we exhibit a strict Lyapunov function for a restriction to a carefully chosen compact set of a well chosen semiflow related to $\Phi$.
Then, we characterize the limit set of the APT using \cite[Th.~5.7]{ben-(cours)99} and \cite[Prop.~3.2]{benaim96}.
The sequence $(z_n)$ converges almost surely to this same limit set.\\

\noindent\textbf{(a) APT.}
For every $n \geq 1$, define $\bar z_n = (v_n,m_n,x_{n-1})$ (note the shift in the index of the variable $x$).
We have the decomposition
$$
\bar z_{n+1} =
\bar z_n + \gamma_{n+1} g(\bar z_n, \tau_n) + \gamma_{n+1} \eta_{n+1} + \gamma_{n+1} \varsigma_{n+1}\,,
$$
where $g$ is defined in Eq.~(\ref{eq:gode}),
\begin{equation}
  \label{eq:noise_eta}
   \eta_{n+1} = \left(p_{n} (\nabla f(x_{n}, \xi_{n+1})^{\odot 2} - S(x_{n})),\,h_{n} (\nabla f(x_{n}, \xi_{n+1}) - \nabla F(x_{n})),\, 0 \right)\,,
\end{equation}
is a martingale increment and where we set
$\varsigma_{n+1} = (\varsigma_{n+1}^{v},\varsigma_{n+1}^{m},\varsigma_{n+1}^{x})$
with the components defined by:
$$
\begin{cases}
  \varsigma_{n+1}^v &= p_n (S(x_n) - S(x_{n-1}))\\
  \varsigma_{n+1}^m &= h_n (\nabla F(x_n) - \nabla F(x_{n-1}))\\
  \varsigma_{n+1}^x &= (\frac{\gamma_n}{\gamma_{n+1}} - 1) \frac{m_n}{\sqrt{ v_n + \varepsilon}}\,.
\end{cases}
$$
We first prove that $\varsigma_n\to 0$ a.s. by considering the components separately.
The components~$\varsigma_{n+1}^m$ and $\varsigma_{n+1}^v$ converge a.s. to zero by using Assumptions~\ref{hyp:F_loclip}, \ref{hyp:S_loclip}, together with the boundedness of the sequences $(p_n)$ and $(h_n)$ (which are both convergent). Indeed, since $\nabla F$ is locally Lipschitz continuous and the sequence $(z_n)$ is supposed to be almost surely bounded, there exists a constant~$C$ s.t. $\|\nabla F(x_n) - \nabla F(x_{n-1})\| \leq C \|x_n-x_{n-1}\| \leq \frac{C}{\varepsilon}\gamma_n\|m_n\|$. The same inequality holds when replacing $\nabla F$ by $S$ which is also locally Lipschitz continuous.
The component $\varsigma_{n+1}^x$ also converges a.s. to zero by observing that $\|\varsigma_{n+1}^x\| \leq |1- \frac{\gamma_n}{\gamma_{n+1}}|.\|m_n\|/\sqrt{\varepsilon}$ and using Assumption~\ref{hyp:stepsizes} together with the a.s. boundedness of $(z_n)$.
Now consider the martingale increment sequence $(\eta_n)$, adapted to $\cF_n$. Take $\delta > 0$. Since $(z_n)$ is a.s bounded, there is a constant $C' > 0$ such that $\PP(\sup \norm{x_n} > C') \leq \delta$. Denoting $\tilde{\eta}_n \eqdef \eta_{n}\1_{\norm{x_n} \leq C'}$ and combining Assumptions~\ref{hyp:hrpq} with \ref{hyp:moment}-\ref{hyp:moment-q} we can show using convexity inequalities that
\[
\sup_n \EE \|\tilde{\eta}_{n+1}\|^{q} < \infty.
\]
Then, we deduce from this result together with the corresponding stepsize assumption from \ref{hyp:moment}-\ref{hyp:moment-q} and \cite[Prop.~4.2]{ben-(cours)99} (see also \cite[Prop.~8]{met-pri-87}) the key property:
\begin{equation}
  \label{eq:arc_tilde}
\forall T > 0\,, \quad
 \max \Bigl\{
\Bigl\| \sum_{k=n}^{L-1} \gamma_{k+1} \tilde{\eta}_{k+1} \Bigr\|
 \ : \ L = n+1, \ldots, J(\tau_n + T) \Bigr\}
 \toaslong 0
\end{equation}
where
$
J(t) = \max \{ n \geq 0 \, : \, \tau_n\leq t \}
$
. Hence, for all $T>0$, with probability at least $1 - \delta$ :
\begin{equation}
  \label{eq:arc}
 \max \Bigl\{
\Bigl\| \sum_{k=n}^{L-1} \gamma_{k+1} \eta_{k+1} \Bigr\|
 \ : \ L = n+1, \ldots, J(\tau_n + T) \Bigr\} \,
 \xrightarrow[n \to \infty]{} 0 \,.
\end{equation}
Since $\delta$ can be chosen arbitrary small, Eq.~\eqref{eq:arc} remains true with probability 1.
 This result also holds under Assumption~\ref{hyp:moment}-\ref{hyp:subgauss-noise} (instead of \ref{hyp:moment}-\ref{hyp:moment-q}) by applying \cite[Prop.~4.4]{ben-(cours)99}.

 Let $\bs z : [0,+\infty)\to \cZ_+$ be the continous-time linearly interpolated process given by
 \[
 \bs z(t) = \bar z_n + (t-\tau_n) \frac{\bar z_{n+1}- \bar z_n}{\gamma_{n+1}} \qquad \left(\forall n \in \NN\,,\, \forall t \in [\tau_n,\tau_{n+1})\right)
 \]
 (where $\tau_n = \sum_{k=1}^n\gamma_k$).
 Let $t_0 > 0$. Define $\bs u : [t_0, \infty) \to \cZ \times (0,1/t_0]$ by
 \[
 \bs u(t) = \begin{bmatrix} \bs z(t) \\ 1/t \end{bmatrix}, \quad
  \text{for}\quad t \geq t_0 > 0.
 \]

 Using Eq.~\eqref{eq:arc} and the almost sure boundedness of the sequence $(z_n)$ along with the fact that $\varsigma_n$ converges a.s. to zero, it follows from \cite[Prop.~4.1, Remark~4.5]{ben-(cours)99} that
 $\bs u(t)$ is an APT of the already defined semiflow $\Phi$ induced by~\eqref{odeauton}. Remark that it also holds that $\bs z(t)$ is an APT of the semiflow $\Phi^\infty$ induced by~(\ref{odeinfty}).
 As the trajectory of $\bs u(t)$ is precompact, the limit set
 \[
 \bs L(\bs u) = \bigcap_{t\geq t_0} \overline{\bs u([t,\infty))}
 \]
 is compact.
 Moreover, it has the form
 \begin{equation}
   \label{eq:S}
 \bs L(\bs u) = \begin{bmatrix} \bs S \\ 0 \end{bmatrix}\,,
 \quad
 \text{where}
 \quad
 \bs S \eqdef \bigcap_{t\geq t_0} \overline{\bs z([t,\infty))}\,.
\end{equation}
 Our objective now is to prove that
 \begin{equation}
 \label{Seq}
 \bs S \subset \Lambda_{\Phi^{\infty}}\,.
 \end{equation}
 In order to establish this inclusion, we study the behavior of the restriction $\Phi|\bs L$ of the semiflow $\Phi$ to the set $\bs L$ (which is well-defined since $\bs L$ is $\Phi$-invariant).
 Remark that
 \[
 \Phi|\bs L = \begin{bmatrix} \Phi^\infty | \bs S \\ 0 \end{bmatrix},
 \]
 where $\Phi^\infty$ is the semiflow associated to \eqref{odeinfty}.
 In the second part of the proof, we establish Eq.~\eqref{Seq} combining item (a) we just proved with \cite[Th.~5.7]{ben-(cours)99} and \cite[Prop.~6.4]{ben-(cours)99}. In order to use the latter proposition, we prove a useful proposition in item (b).

\noindent\textbf{(b) Strict Lyapunov function and convergence.}
For every $\delta>0$ and every $z = (v,m,x)\in \cZ_+$, define:
\begin{equation}
W_\delta(v,m,x) \eqdef \mE_{\infty}(z)
  - \delta\ps{\nabla F(x), m} + \delta \| q_\infty v - p_\infty S(x) \|^2\,,
\label{eq:Wdelta}
\end{equation}
where, under Assumption~\ref{hyp:hrpq}-\ref{hyp:h}, the function $\mE_{\infty}$ is defined by
\begin{equation}
  \label{eq:lyap_infty}
\mE_\infty(z) \eqdef \lim_{t \to +\infty} \mE(t, z)
              = h_\infty (F(x) - F_\star)
              + \frac{1}{2} \left\| \frac{m}{(v+\varepsilon)^{\odot \frac 14}} \right\|^2\,.
\end{equation}

\begin{proposition}
  \label{prop:Wstrict}
Let $t_0 > 0$ and let
Assumptions~\ref{hyp:F_loclip} to 
\ref{hyp:hrpq} 
and \ref{hyp:sard} hold true.
Let $\bs S$ be the limit set defined in Eq.~\eqref{eq:S}.
Let $\bflot^\infty: \bs S \times [t_0,+\infty) \to \bs S$ be the restriction of the semiflow $\flot^\infty$ to $\bs S$ \emph{i.e.},
$\bflot^\infty(z,t) = \flot^\infty(z,t)$ for all $z\in \bs S, t \geq t_0$.
Then,
\begin{enumerate}[{\it i)}]
  \setlength{\itemsep}{0pt}
\item $\bs S$ is compact.
\item $\bflot^\infty$ is a well-defined semiflow on $\bs S$.
\item \label{eq_restricted} The set of equilibrium points of $\bflot^\infty$ is equal to $\Lambda_{\Phi^\infty} \cap \bs S$.
\item There exists $\delta>0$ s.t. $W_\delta$ is a strict Lyapunov function for the semiflow~$\bflot^\infty$.
\end{enumerate}
\end{proposition}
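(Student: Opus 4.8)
\emph{Proof plan.} Items \emph{i)}--\emph{iii)} are essentially formal and I would dispatch them first. Item \emph{i)} was already observed below~\eqref{eq:S}: the a.s.\ boundedness of $(z_n)$ makes the interpolated process $\bs u$ precompact, so its limit set $\bs L(\bs u)=[\bs S;0]$ is compact and hence so is $\bs S$ (and $\bs S\subset\cZ_+$, since each $v_n\ge 0$, whence $\sv+\varepsilon\ge\varepsilon$ on $\bs S$). For \emph{ii)}, item (a) shows that $\bs z$ is an APT of $\Phi^\infty$, and the limit set of an APT is invariant under the semiflow (see, e.g., \cite{ben-hir-96} or \cite[Th.~5.7]{ben-(cours)99}); thus $\bs S$ is $\Phi^\infty$-invariant and, since $\Phi^\infty$ is globally defined (Lem.~\ref{lm:exist-uniq-bnd}), the restriction $\bflot^\infty$ is a genuine semiflow on the compact set $\bs S$. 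For \emph{iii)}, $z$ is an equilibrium of $\bflot^\infty$ iff $z\in\bs S$ and $\Phi^\infty(z,t)=z$ for all $t$, i.e.\ iff $z\in\Lambda_{\Phi^\infty}\cap\bs S$.

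The substance is item \emph{iv)}. Fix a solution $(\sv,\sm,\sx)$ of the autonomous ODE~\eqref{odeinfty} with range in $\bs S$, and set $N^2:=\|\sm/(\sv+\varepsilon)^{\odot 1/4}\|^2$ and $w:=q_\infty\sv-p_\infty S(\sx)$. I would differentiate the three terms of $W_\delta$ along this solution. For $\mE_\infty$, substituting~\eqref{odeinfty} the $h_\infty\nabla F$ contributions cancel exactly and, using $\sv\ge 0$, one obtains exactly as in the derivation of~\eqref{lyap-decrease} specialised to~\eqref{odeinfty}
\[
\tfrac{d}{dt}\mE_\infty \ \le\ -\big(r_\infty-\tfrac{q_\infty}{4}\big)\,N^2-\tfrac{p_\infty}{4}\,\ps{S(\sx),\ \tfrac{\sm^{\odot 2}}{(\sv+\varepsilon)^{\odot 3/2}}}\,.
\]
For $-\langle\nabla F(\sx),\sm\rangle$, using $\dot\sm=h_\infty\nabla F(\sx)-r_\infty\sm$ together with Young's inequality and the Lipschitz estimate $\|\tfrac{d}{dt}\nabla F(\sx)\|\le L_F\|\dot\sx\|\le\tfrac{L_F}{\sqrt\varepsilon}\|\sm\|$, one gets $\tfrac{d}{dt}(-\ps{\nabla F(\sx),\sm})\le -\tfrac{h_\infty}{2}\|\nabla F(\sx)\|^2+c_1\|\sm\|^2$; for $\|w\|^2$, from $q_\infty\dot\sv=-q_\infty w$ and $\|\tfrac{d}{dt}S(\sx)\|\le \tfrac{L_S}{\sqrt\varepsilon}\|\sm\|$, one gets $\tfrac{d}{dt}\|w\|^2\le -q_\infty\|w\|^2+c_2\|\sm\|^2$ (here $L_F,L_S$ are Lipschitz constants of $\nabla F,S$ on a neighbourhood of the compact $\bs S$, and all estimates are understood in the a.e.\ sense). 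On $\bs S$ one has $\|\sm\|^2\le B N^2$ with $B:=\max_{\bs S}(v_i+\varepsilon)^{1/2}$. Collecting the three bounds, choosing $\delta>0$ small enough that $a_1:=r_\infty-\tfrac{q_\infty}{4}-\delta B(c_1+c_2)>0$ — which is possible precisely because of Assumption~\ref{hyp:hrpq}-\ref{hyp:stabode} ($r_\infty>q_\infty/4$) — and discarding the nonnegative $S(\sx)$-term, yields for a.e.\ $t$
\[
\tfrac{d}{dt}W_\delta(\sv(t),\sm(t),\sx(t))\ \le\ -a_1 N(t)^2-\tfrac{\delta h_\infty}{2}\|\nabla F(\sx(t))\|^2-\delta q_\infty\|w(t)\|^2\ \le\ 0 .
\]

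Since $W_\delta$ is locally Lipschitz and the solution is $\mC^1$ with range in the compact $\bs S$, the map $t\mapsto W_\delta(\sv(t),\sm(t),\sx(t))$ is locally absolutely continuous, so the last display shows it is non-increasing; hence $W_\delta$ is a Lyapunov function for $\bflot^\infty$. For strictness, if $z\in\bs S$ is such that $t\mapsto W_\delta(\bflot^\infty(z,t))$ is constant, then integrating the inequality over $[0,T]$ forces the three continuous nonnegative integrands to vanish identically, so along $\Phi^\infty(z,\cdot)$ we have $\sm\equiv 0$, $\nabla F(\sx)\equiv 0$ and $q_\infty\sv\equiv p_\infty S(\sx)$; consequently $\dot\sx=-\sm/\sqrt{\sv+\varepsilon}\equiv 0$, so $\sx\equiv x_\star$ with $\nabla F(x_\star)=0$, and $\sv\equiv p_\infty S(x_\star)/q_\infty$, whence $\Phi^\infty(z,t)=z$ for all $t$, i.e.\ $z\in\Lambda_{\Phi^\infty}\cap\bs S$, which by \emph{iii)} is exactly the equilibrium set of $\bflot^\infty$. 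The reverse inclusion is trivial, so $W_\delta$ is a strict Lyapunov function for $\bflot^\infty$. The one genuinely delicate point is the limited regularity — $F$ is only $\mC^1$ with locally Lipschitz gradient and $S$ is only locally Lipschitz, so $\nabla^2F$ and the Jacobian of $S$ need not exist — which is why the differentiations must be carried out in the a.e.\ sense via the Lipschitz estimates above; this is harmless since only the integrated (monotonicity) form of the inequality is used. (Assumption~\ref{hyp:sard}, although listed, is not needed here; it enters only when this proposition is applied.)
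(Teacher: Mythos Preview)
Your proposal is correct and follows essentially the same approach as the paper: both decompose $W_\delta$ into the three pieces $\mE_\infty$, $-\ps{\nabla F(x),m}$, and $\|q_\infty v-p_\infty S(x)\|^2$, differentiate each along solutions of~\eqref{odeinfty} using the local Lipschitz constants of $\nabla F$ and $S$ together with Young's inequality, and then choose $\delta$ small enough (via $r_\infty>q_\infty/4$) to make the combined coefficient of $N^2$ negative, yielding the pointwise inequality~\eqref{eq:strict_lyap} from which strictness follows by the same equilibrium identification. The paper handles the limited regularity by working with $\limsup$ difference quotients $\cL_{\mathsf W}$ rather than derivatives, which is a slightly cleaner way to formalise exactly the ``a.e.\ sense'' you flag at the end; your observation that Assumption~\ref{hyp:sard} is not used in the proof itself is also correct.
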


\begin{proof}
  The first point is a consequence of the definition of $\bs S$ and the boundedness of $\bs z$. 
  The second point stems from the definition of $\flot^\infty$.
  Observing that $\bflot^\infty$ is valued in $\bs S$, the third point is immediate from the definition of~$\Lambda_{\Phi^\infty}$.
  We now prove the last point.
  Consider $z\in \bs S$ and write $\bflot^{\infty}(z,t)$ under the form $\bflot^{\infty}(z,t) = (\sv(t),\sm(t),\sx(t))$. Notice that this quantity is bounded as a function of the variable $t$. For \emph{any} map ${\mathsf W}:\cZ_+\to\bR$, define
  for all $t \geq t_0$, $ \cL_{\mathsf W}(t) \eqdef \limsup_{s\to 0} s^{-1}({\mathsf W}(\bflot^{\infty}(z,t+s)) - {\mathsf W}(\bflot^{\infty}(z,t)))\,.$
  Introduce $G(z)\eqdef -\ps{\nabla F(x),m}$ and $H(z)\eqdef \|q_\infty v - p_\infty S(x)\|^2$ for every $z=(v,m,x)\in \cZ_+$.
  Consider $\delta>0$ (to be specified later on).
  We study $\cL_{W_\delta} = \cL_{\mE_\infty} + \delta \cL_{G} + \delta \cL_{H}$.
  Note that $\bflot^{\infty}(z,t)\in \bs S\cap \cZ_+$ for all $t \geq t_0$ by an analogous result to Lem.~\ref{lemma:v>0} for $\flot^\infty$. Thus, $t\mapsto \mE_\infty(\bflot^{\infty}(z,t))$
  is differentiable at any point~$t \geq t_0$ and $\cL_{\mE_\infty}(t) = \frac{d}{dt} \mE_{\infty}(\bflot^{\infty}(z,t))$.
  Using similar derivations to Ineq.~\eqref{lyap-decrease},
  we obtain that
  \begin{equation}
  \label{eq:L_E_infty}
   \cL_{\mE_\infty}(t) \leq - \left( r_\infty - \frac{q_\infty}{4} \right)
      \left\| \frac{\sm(t)}{(\sv(t)+\varepsilon)^{\odot \frac 14}} \right\|^2 \,.
  \end{equation}
  We now study $\cL_G$. For every~$t \geq t_0$,
  \begin{align*}
    \cL_G(t)&=  \limsup_{s\to 0} s^{-1}(-\ps{\nabla F(\sx(t+s)),\sm(t+s)}+\ps{\nabla F(\sx(t)),\sm(t)}) \\
  &\leq  \limsup_{s\to 0} s^{-1}\|\nabla F(\sx(t))- \nabla F(\sx(t+s))\| \|\sm(t+s)\| - \ps{\nabla F(\sx(t)),\dot \sm(t)}\,.
  \end{align*}

  Let $L_{\nabla F}$ be the Lipschitz constant of $\nabla F$
  on the bounded set $\{x:(v,m,x)\in \bs S\}$. Define $C_1 \eqdef \sup_t \| \sqrt{\sv(t)+\varepsilon} \|$. Then,
  \begin{align}
    \cL_G(t)&\leq  L_{\nabla F}\limsup_{s\to 0} s^{-1}\|\sx(t)- \sx(t+s)\| \|\sm(t+s)\| - \ps{\nabla F(\sx(t)),\dot \sm(t)}\nonumber\\
  &\leq  L_{\nabla F}\|\dot \sx(t)\| \|\sm(t)\| - \ps{\nabla F(\sx(t)),\dot \sm(t)}\nonumber\\
  &\leq L_{\nabla F}\|\dot \sx(t)\| \|\sm(t)\| - h_\infty \|\nabla F(\sx(t))\|^2  + r_\infty \ps{\nabla F(\sx(t)),\sm(t)}\nonumber\\
  &\leq \left( \frac{L_{\nabla F} C_1^{\frac 12}}{\varepsilon^{\frac 14}} + \frac{r_\infty C_1}{2 u_1^2}\right) \left\| \frac{\sm(t)}{(\sv(t)+\varepsilon)^{\odot \frac 14}} \right\|^2
  - \left(h_\infty - \frac{r_\infty u_1^2}{2} \right)\| \nabla F(\sx(t)) \|^2 \label{eq:L_G}
  \end{align}
where we used the classical inequality $|\ps{a,b}| \leq \| a\|^2 / (2u^2) + u^2 \| b \|^2/2$ for any non-zero real $u$ to derive the last above inequality.
  We now study $\cL_H$. For every~$t \geq t_0$,
  \begin{align*}
    \cL_H(t)&=  \limsup_{s\to 0} s^{-1}(\|q_\infty \sv(t+s)- p_\infty S(\sx(t+s))\|^2-\|q_\infty \sv(t) - p_\infty S(\sx(t))\|^2) \\
  &= \limsup_{s\to 0} s^{-1}(p_\infty^2\|S(\sx(t)) - S(\sx(t+s))\|^2\\  &+ 2 p_\infty \ps{S(\sx(t)) - S(\sx(t+s)), q_\infty \sv(t+s) - p_\infty S(\sx(t))})\\
  &+ \lim_{s\to 0} s^{-1}(\|q_\infty \sv(t+s) - p_\infty S(\sx(t))\|^2
     -\|q_\infty \sv(t) - p_\infty S(\sx(t))\|^2) \,.
  \end{align*}
  The second term in the righthand side coincides with $-2 q_\infty \ps{p_\infty S(\sx(t))-q_\infty \sv(t),\dot \sv(t)} = -2 q_\infty \|p_\infty S(\sx(t))-q_\infty \sv(t)\|^2$.
  Denote by $L_S$ the Lipschitz constant of $S$ on the set $\{x:(v,m,x)\in \bs S\}$.
  Note that $s^{-1}(\|S(\sx(t+s))-S(\sx(t))\|^2)\leq L_S^2 s\| s^{-1}(\sx(t+s)-\sx(t))\|^2$ which converges
  to zero as $s\to 0$. Thus,
  \begin{align}
    \cL_H(t)&= -2 q_\infty \|p_\infty S(\sx(t))-q_\infty \sv(t)\|^2\nonumber\\
  &+\limsup_{s\to 0} 2 p_\infty s^{-1} \ps{S(\sx(t)) - S(\sx(t+s)), q_\infty \sv(t+s) - p_\infty S(\sx(t))} \nonumber\\
  &\leq  -2 q_\infty \|p_\infty S(\sx(t))-q_\infty \sv(t)\|^2
  + 2 p_\infty  \|\dot \sx(t)\| L_S \|q_\infty \sv(t) - p_\infty S(x(t))\| \nonumber\\
  &\leq \frac{p_\infty}{\varepsilon^{\frac 12} u_2^2} \left\| \frac{\sm(t)}{(\sv(t)+\varepsilon)^{\odot \frac 14}} \right\|^2 - (2 q_\infty - p_\infty u_2^2 L_S^2) \|p_\infty S(\sx(t))-q_\infty \sv(t)\|^2\,. \label{eq:L_H}
  \end{align}

  Recalling that $\cL_{W_\delta} = \cL_{\mE_\infty} + \delta \cL_{G} + \delta \cL_{H}$ and combining Eqs.~\eqref{eq:L_E_infty}, \eqref{eq:L_G} and~\eqref{eq:L_H}, we obtain for every $t \geq t_0$,
  \begin{multline}
    \cL_{W_\delta}(t) \leq -M(\delta)
    \left\| \frac{\sm(t)}{(\sv(t)+\varepsilon)^{\odot \frac 14}} \right\|^2
    - \delta \left( h_\infty - \frac{r_\infty u_1^2}{2}\right)
       \| \nabla F(\sx(t)) \|^2\\
    - \delta \left( 2 q_\infty - p_\infty u_2^2 L_S^2 \right)
        \| p_\infty S(\sx(t)) - q_\infty \sv(t) \|^2\,.
  \end{multline}
  where $M(\delta)\eqdef r_\infty - \frac{q_\infty}{4}
  - \delta\left( \frac{r_\infty C_1}{2 u_1^2}
     +  \frac{L_{\nabla F} C_1^{\frac 12}}{\varepsilon^{\frac 14}}
     +  \frac{p_\infty}{\varepsilon^{\frac 12} u_2^2} \right)\,.$
  Now select $u_1$, $u_2$ small enough s.t. $h_\infty - r_\infty u_1^2/2 > 0$ and $2 q_\infty - p_\infty u_2^2 L_S^2 > 0$. Then, choose
   $\delta$ in such a way that $M(\delta)>0$. Thus, there exists a constant $c$ depending on $\delta$ s.t.
  \begin{equation}
    \label{eq:strict_lyap}
  \forall t \geq t_0,\ \  \cL_{W_\delta}(t) \leq -c\left(
  \left\| \frac{\sm(t)}{(\sv(t)+\varepsilon)^{\odot \frac 14}} \right\|^2
  + \|\nabla  F(\sx(t))\|^2
  + \|p_\infty S(\sx(t)) - q_\infty \sv(t)\|^2\right)\,.
  \end{equation}

  It can easily be seen that for every $z\in \bs S$,  $t\mapsto W_\delta(\bflot^{\infty}(z,t))$ is Lipschitz continuous, hence absolutely continuous.
  Its derivative almost everywhere coincides with $\cL_{W_\delta}$, which is nonpositive.
  Thus, $W_\delta$ is a Lyapunov function for $\bflot^{\infty}$.
  We prove that the Lyapunov function is strict.
  Consider $z = (v,m,x) \in \bs S$ s.t. $W_\delta(\bflot^{\infty}(z,t))=W_\delta(z)$ for all $t \geq t_0$.
  The derivative almost everywhere of  $t\mapsto W_\delta(\bflot^{\infty}(z,t))$ is identically zero,
  and by Eq.~\eqref{eq:strict_lyap}, this implies that
   $$-c\left(\left\|  \frac{\sm(t)}{(\sv(t)+\varepsilon)^{\odot \frac 14}}\right\|^2
   + \|\nabla  F(\sx(t))\|^2
   + \|p_\infty S(\sx(t)) - q_\infty \sv(t)\|^2\right)$$
    is equal to zero
  for every $t \geq t_0$ a.e. (hence, for every~$t \geq t_0$, by continuity of~$\bflot^{\infty}$).
  In particular for $t=t_0$, $m=\nabla F(x)=0$ and $p_\infty S(x)- q_\infty v=0$. Hence, $z\in \zer g_\infty \cap \bs S$. This concludes the proof since $\Lambda_{\Phi^\infty} = \zer g_\infty$.
\end{proof}

\noindent\textbf{End of the Proof of Th.~\ref{th:as_conv_under_stab}.}
Finally, Assumption~\ref{hyp:sard} implies that $W_\delta(\Lambda_{\Phi^\infty} \cap \bs S)$ is of empty interior.
Recall that Assumptions~\ref{hyp:F_loclip} and~\ref{hyp:S_loclip} both follow from Assumption~\ref{hyp:model_bis} made in Th.~\ref{th:as_conv_under_stab}.
Given Prop.~\ref{prop:Wstrict}, the proof is concluded by applying \cite[Prop.~6.4]{ben-(cours)99} to the restricted semiflow $\bar{\Phi}^\infty$ (with $(M,\Lambda) = (\bs S, \Lambda_{\bar{\Phi}^\infty})$). Note that a Lyapunov function for $\Lambda_{\bar{\Phi}^\infty}$ is what is called a strict Lyapunov function. Such a function is provided by Prop.~\ref{prop:Wstrict}. We obtain as a conclusion of \cite[Prop.~6.4]{ben-(cours)99} that $\bs S \subset \Lambda_{\bar{\Phi}^\infty}$. This gives the desired result (Eq.~\eqref{Seq}) given Prop.~\ref{prop:Wstrict}-\ref{eq_restricted}.

The last assertion of Th.~\ref{th:as_conv_under_stab} is a consequence of \cite[Cor.~6.6]{ben-(cours)99}.

\subsection{Proof of Th.~\ref{th:as_conv_under_stab_nesterov}}
\label{sec:proof_simp_as}

We can rewrite the iterates from Algorithm~\ref{algo-nesterov} as follows:
\begin{equation}\label{eq:nest_alg}
  \begin{cases}
    m_{n+1} &= m_n + \gamma_{n+1}(\nabla F(x_{n}) - \frac{\alpha}{\tau_n}\, m_n) + \gamma_{n+1} (\nabla f(x_n,\xi_{n+1}) - \nabla F(x_n)) \\
    x_{n+1} &= x_n - \gamma_{n+1} m_{n+1}\,.
  \end{cases}
\end{equation}

We prove that the sequence $( y_n =(m_n,x_n): n \in \NN)$ of iterates of this algorithm converges almost surely towards the set $\bar\Upsilon$ defined in Eq.~\eqref{eq:bar_Upsilon} if it is supposed to be bounded with probability one.
The proof follows a similar path to the proof in Section~\ref{sec:proof_simp}.

Indeed, denote by $\sX$ and $\sM$ the linearly interpolated processes constructed respectively from the sequences $(x_n)$ and $(m_n)$ and let $\sss(t) = 1/t$.
Recall that $\Phi_N = (\Phi_N^m, \Phi_N^x, \Phi_N^s)$ is the semiflow induced by~\eqref{eq:odeNest_aut}. As in Section~\ref{proof:th_as_conv}, we have that $\sZ \eqdef (\sM, \sX, \sss)$ is an APT of~\eqref{eq:odeNest_aut}. In particular, this means that
\begin{equation}\label{eq:APT_Nest}
  \forall T >0\,, \quad \sup_{h \in [0, T]} \norm{ \sX(t + h) - \Phi_N^x(\sZ(t), h)} \xrightarrow[t\to\infty]{} 0 \, .
\end{equation}

By Lem.~\ref{lm:ch_var}, we also have that
\begin{multline}\label{eq:Apt_Gad}
      \sup_{h \in [0, T^2/\kappa^2]} \norm{ \sX(t + \kappa\sqrt{h}) - \Phi_N^x(\sZ(t), \kappa \sqrt{h})} \\
      = \sup_{h \in [0, T^2/\kappa^2]} \norm{ \sX(t + \kappa\sqrt{h}) - \Phi_H^u(\sZ(t), h)} \xrightarrow[t\to\infty]{} 0\,.
\end{multline}

Let $(m,x)$ be a limit point of the sequence $(y_n)$ and let $T > 0$. Using Lem.~\ref{lm:rel_comp}, we can proceed in the same manner as in Section~\ref{sec:proof_simp} and get a sequence $(t_k)$ such that
$$(\sM(t_k + \cdot), \sX(t_k + \cdot))\rightarrow (\sm, \sx)\,\, \text{and}\,\, (\Phi_H^y(\sZ(t_k), \cdot), \Phi_H^u(\sZ(t_k), \cdot)) \rightarrow (\sy, \su)\,,$$
where $(\sm(0), \sx(0)) = (m,x)$\,, and $(\sm, \sx)$ and $(\sx, \su)$ are respectively solutions to~\eqref{eq:lim_nest} and~\eqref{eq:lim_gad}. As in the end of Section~\ref{sec:proof_simp}, we obtain that $\su$ and $\sx$ are constant, therefore $\sm \equiv 0$ and $\nabla F(\sx) \equiv 0\,,$ which finishes the proof.

\subsection{Proof of Th.~\ref{th:stab}}
\label{sec:stability}

The idea of the proof is to apply Robbins-Siegmund's theorem~\cite{robbins1971convergence} to
 $$
 V_n = h_{n-1}F(x_n) + \frac{1}{2}\ps{m_n^{\odot 2},\frac 1{\sqrt{v_n + \varepsilon}}}
 $$
 (note the similarity of $V_n$ with the energy function~\eqref{eq:lyap}).
Since $\inf F>-\infty$, we assume without loss of generality that $F\geq 0$.
In this subsection, we use the notation $\nabla f_{n+1}$ as a shorthand notation for $\nabla f(x_n,\xi_{n+1})$ and $C$ denotes some positive constant which may change from line to line. We write $\EE_n = \EE[ \cdot \, | \, \mcF_n ]$ for the conditional expectation w.r.t the $\sigma$-algebra~$\mcF_n$.
Define
$P_n \eqdef \frac{1}{2}\ps{D_n,m_n^{\odot 2}}$, with~$D_n \eqdef \frac{1}{\sqrt{v_{n}+\varepsilon}}$.
We have the decomposition:
\begin{equation}
P_{n+1}-P_n=\frac{1}{2}\ps{D_{n+1}-D_n,m_{n+1}^{\odot 2}}+\frac {1}{2}\ps{D_n,m_{n+1}^{\odot 2}-m_n^{\odot 2}}.\label{eq:P-P}
\end{equation}
We estimate the vector
$$
D_{n+1}-D_n = \frac{\sqrt{v_n + \varepsilon} - \sqrt{ v_{n+1}+\varepsilon}}{\sqrt{v_{n+1}+\varepsilon}\odot \sqrt{v_n + \varepsilon}}\,.
$$
Remarking that $v_{n+1} \geq (1-\gamma_{n+1}q_n) v_n$ and using the update rule of $v_n$, we obtain for a sufficiently large $n$ that
\begin{align}
  \label{eq:subsubterm2}
\sqrt{v_n + \varepsilon} - \sqrt{ v_{n+1}+\varepsilon}
      &= \gamma_{n+1} \frac{q_n v_n -p_n \nabla f_{n+1}^{\odot 2}}{ \sqrt{v_n + \varepsilon} +  \sqrt{v_{n+1} + \varepsilon}}\nonumber\\
      &\leq \gamma_{n+1}q_n \frac{v_n}{(1 + \sqrt{1-\gamma_{n+1}q_n})\sqrt{v_n + \varepsilon}}\nonumber\\
      &= \frac{\gamma_{n+1}q_n}{1 + \sqrt{1-\gamma_{n+1}q_n}}\sqrt{v_n}\odot \frac{\sqrt{v_n}}{\sqrt{v_n + \varepsilon}}\nonumber\\
      &\leq c_{n+1} \sqrt{v_{n+1}}  \,\, \text{where} \,\,
      c_{n+1} \eqdef \frac{\gamma_{n+1}q_n}
      {\sqrt{1-\gamma_{n+1}q_n}(1 + \sqrt{1-\gamma_{n+1}q_n})}\,.
\end{align}
It is easy to see that $c_{n+1}/\gamma_n\to q_\infty/2$. Thus, for any $\delta>0$, $c_{n+1}\leq (q_\infty+2\delta)\gamma_{n}/2$ for all $n$ large enough.
Using also that $\sqrt{v_{n+1}}/\sqrt{v_{n+1}+\varepsilon}\leq 1$, we obtain
\begin{equation}\label{eq:dif_D_n}
  D_{n+1}-D_n \leq \frac{q_\infty+2\delta}2 \gamma_n D_n\,.
\end{equation}

Substituting the above inequality in Eq.~(\ref{eq:P-P}), we obtain
\begin{align*}
  P_{n+1}-P_n&\leq  \left(\frac{q_\infty+2\delta}{2}\right)\frac{ \gamma_n}{2}\ps{D_n,m_{n+1}^{\odot 2}}+\frac{1}{2}\ps{D_n,m_{n+1}^{\odot 2}-m_n^{\odot 2}} \\
&\leq \frac{q_\infty+2\delta}{2}\gamma_n P_n
+\left(1+\frac{q_\infty+2\delta}2\gamma_n\right)\frac{1}{2}\ps{D_n,m_{n+1}^{\odot 2}-m_n^{\odot 2}} \,.
\end{align*}
Using $m_{n+1}^{\odot 2} - m_n^{\odot 2} =  2 m_n
\odot (m_{n+1}-m_n)+(m_{n+1}-m_n)^{\odot 2} $, and noting that $\EE_n(m_{n+1}-m_n) = \gamma_{n+1}h_n\nabla F(x_n) - \gamma_{n+1}r_n m_n$,
\begin{multline*}
  \EE_n \frac{1}{2}\ps{D_n,m_{n+1}^{\odot 2}- m_n^{\odot 2}}
= \gamma_{n+1}h_n\ps{\nabla F(x_n),\frac{m_n}{\sqrt{v_n + \varepsilon}}}-2\gamma_{n+1}r_nP_n\\
+\frac{1}{2}\ps{D_n, \EE_n[(m_{n+1}-m_n)^{\odot 2}]}\,.
\end{multline*}
There exists $\delta >0$ such that $r_\infty - \frac{q_\infty}{4} - \frac{\delta}{2}>0$ by Assumption~\ref{hyp:hrpq}-\ref{hyp:stabode}.
As $\frac{\gamma_{n+1}}{\gamma_n} r_n - \frac{q_\infty}{4} \to r_\infty - \frac{q_\infty}{4}$, for all $n$ large enough, $\frac{\gamma_{n+1}}{\gamma_n} r_n - \frac{q_\infty}{4} > r_\infty - \frac{q_\infty}{4} - \frac{\delta}{2}>0$. Hence, for all $n$ large enough,
\begin{align}
  \label{eq:intermediate}
 \EE_n P_{n+1}-P_n&\leq
 -2\left(r_\infty-\frac{q_\infty}{4} -\frac{\delta}{2}\right)\gamma_n P_n + \gamma_{n+1}h_n\ps{\nabla F(x_n),\frac{m_n}{\sqrt{v_n + \varepsilon}}}
\nonumber\\
&+
C\gamma_n^2\ps{\nabla F(x_n),\frac{ m_n}{\sqrt{v_n + \varepsilon}}}
+C\ps{D_n, \EE_n[(m_{n+1}-m_n)^{\odot 2}] }\,.
\end{align}
Using the inequality $\ps{u,v} \leq (\|u\|^2+\|v\|^2)/2$ and Assumption~\ref{hyp:stability}-\ref{momentgrowth}, 
it is easy to show the inequality
$\ps{\nabla F(x_n),\frac{m_n}{\sqrt{v_n + \varepsilon}}}\leq C(1+F(x_n)+P_n)$.
Moreover, using the componentwise inequality $(h_n\nabla f_{n+1}- r_nm_n)^{\odot 2} \leq 2h_n^2 \nabla f_{n+1}^{\odot 2} + 2 r_n^2 m_n^{\odot 2}$
along with Assumption~\ref{hyp:stability}-\ref{momentgrowth} and the boundedness of the sequences $(h_n), (r_n)$ and $(\gamma_{n+1}/\gamma_n)$, we obtain
\begin{equation}
  \label{eq:intermediate_2}
\ps{D_n, \EE_n[(m_{n+1}-m_n)^{\odot 2}] }
\leq C\gamma_n^2(1+F(x_n)+P_n)\,.
\end{equation}
Combining Eq.~\eqref{eq:intermediate} and Eq.~\eqref{eq:intermediate_2}, we get
\begin{multline}\label{eq:intermediate_3}
  \EE_n(P_{n+1} - P_n) \leq\gamma_{n+1}h_n\ps{\nabla F(x_n),m_n \odot D_n}
 + C\gamma_n^2(1+F(x_n)+P_n) \, .
\end{multline}

Denoting by $M$ the Lipschitz coefficient of $\nabla F$, we also have
\begin{align}
F(x_{n+1})  &\leq F(x_{n}) - \gamma_{n+1} \ps{\nabla F(x_{n}),m_{n+1}\odot D_{n+1}} + \frac{\gamma_{n+1}^2M}{2}\left\|m_{n+1}\odot D_{n+1}\right\|^2 \, .
\label{eq:lip}
\end{align}

Using~\eqref{eq:dif_D_n} and the update rule of $m_n$, we have\\
$
\norm{m_{n+1} \odot D_{n+1}  - m_n \odot D_n}^2
$
\begin{align}\label{eq:dif_mnDn}
  \begin{split}
      &\leq C\norm{(m_{n+1} - m_n) \odot D_n}^2  +  C\norm{m_{n+1}\odot (D_{n+1} - D_n)}^2 \\
      &\leq C \gamma^2_{n+1}( \norm{\nabla f_{n+1}}^2 + \norm{m_n \odot D_n}^2)  + C\gamma^2_{n+1} \norm{m_{n+1}\odot D_n}^2\\
      &\leq C \gamma^2_{n+1} ( \norm{m_n \odot D_n }^2 + \norm{\nabla f_{n+1}}^2) \, .
  \end{split}
\end{align}

Finally, recalling that $V_{n} = h_{n-1} F(x_n) + P_n$, $(h_n)$ is decreasing, combining Eq.~\eqref{eq:intermediate_3},\eqref{eq:lip},\eqref{eq:dif_mnDn},
and using Assumption~\ref{hyp:stability},
 we have
\begin{equation*}\label{eq:Rob_sig}
  \begin{split}
      \EE_n[V_{n+1}] &\leq V_n +  \gamma_{n+1} h_n  \scalarp{\nabla F(x_{n})} {\EE_n \left[m_{n} \odot D_n - m_{n+1} \odot D_{n+1}\right]} \\
      &+ C \gamma_{n+1}^2 \left( 1 + F(x_n) + P_n + \norm{m_n \odot D_n}^2 \right)\\
      &+ C \gamma_{n+1}^2 \EE_n [\norm{m_{n} \odot D_n - m_{n+1} \odot D_{n+1}}^2] \\
      &\leq V_n + C\gamma_n^2 \left( 1 + F(x_n) + P_n + \norm{m_n \odot D_n}^2 + \EE_n\left[ \norm{\nabla f_{n+1}}^2\right]\right)\\
      &\leq V_n + C \gamma_n^2( 1 + F(x_n) + P_n)\\
      &\leq (1 + C\gamma_n^2 )V_n + C \gamma_n^2\, ,
  \end{split}
\end{equation*}
where we used Cauchy-Schwarz's inequality and the fact that $\norm{m_n \odot D_n}^2 \leq C P_n$.
By the Robbins-Siegmund's theorem  \cite{robbins1971convergence},
the sequence $(V_n)$ converges almost surely to a finite random variable $V_\infty \in \bR^+$.
Then, the coercivity of $F$ implies that $(x_n)$ is almost surely bounded.

We now establish the almost sure boundedness of $(m_n)$. Assume in the sequel that $n$ is large enough to have $(1 - \gamma_{n+1} r_n) \geq 0$.
Consider the martingale difference sequence $\Delta_{n+1}\eqdef \nabla f_{n+1} -\nabla F(x_n)$.
We decompose $m_{n} = \bar m_n + \tilde m_n$ where
$\bar m_{n+1} = (1-\gamma_{n+1}r_n) \bar m_n + \gamma_{n+1} h_n \nabla F(x_n)$ and
$\tilde m_{n+1} = (1-\gamma_{n+1}r_n) \tilde m_n + \gamma_{n+1} h_n \Delta_{n+1}$, setting $\bar m_0= 0$ and $\tilde m_0= m_0$.
We prove that both terms $\bar m_n$ and $\tilde m_n$ are bounded. Consider the first term:
$
\|\bar m_{n+1}\|\leq (1-\gamma_{n+1}r_n) \|\bar m_n\| + \gamma_{n+1} \sup_k\|h_k\nabla F(x_k)\| \,,
$
where the supremum in the above inequality is almost surely finite by continuity of $\nabla F$. We immediately get that if $\norm{\bar m_n} \geq \frac{\sup_k\|h_k\nabla F(x_k)\|}{r_{\infty}}$, then $\norm{\bar m_{n+1}} \leq \|\bar m_n\|$. Thus

\begin{equation*}
  \norm{\bar m_{n+1}} \leq  \frac{\sup_k\|h_k\nabla F(x_k)\|}{r_{\infty}} + \sup_k \gamma_{k+1} \|h_k\nabla F(x_k)\| \, ,
\end{equation*} 
which implies that $\bar m_{n}$ is bounded.

Consider now the term $\tilde m_n$:
\begin{equation*}
    \resizebox{\hsize}{!}{$
  \EE_n[\|\tilde m_{n+1}\|^2]
  = (1-\gamma_{n+1}r_n)^2\|\tilde m_n\|^2
  + \gamma_{n+1}^2h_n^2\EE_n[\|\Delta_{n+1}\|^2]
  \leq  \|\tilde m_n\|^2
  + \gamma_{n+1}^2h_n^2\EE_n[\|\Delta_{n+1}\|^2]\,.
    $}
\end{equation*}
Then, the inequality $\EE_n[\|\Delta_{n+1}\|^2] \leq \EE_n[\|\nabla f_{n+1}\|^2]$ combined with Assumption~\ref{hyp:moment}-\ref{hyp:moment-q} and the a.s. boundedness of the sequence $(x_n)$ imply that there exists a finite random variable $C_{\mathcal{K}}$ (independent of~$n$) s.t. $\EE_n[\|\nabla f_{n+1}\|^2] \leq C_{\mathcal{K}}$.
As a consequence, since $\sum_n \gamma_{n+1}^2<\infty$ and the sequence~$(h_n)$ is bounded, we obtain that a.s.:
\begin{equation*}
\sum_{n\geq 0} \gamma_{n+1}^2 h_n^2 \EE_n[\|\Delta_{n+1}\|^2] \leq C C_{\mathcal{K}} \sum_{n\geq 0} \gamma_{n+1}^2 < +\infty\,.
\end{equation*}
Hence, we can apply the Robbins-Siegmund theorem to obtain that $\sup_n\|\tilde m_n\|^2<\infty$ w.p.1. Finally, it can be shown that $(v_n)$ is almost surely bounded using the same arguments, decomposing $v_n$ into $\bar{v}_n + \tilde{v}_n$ as above. Indeed, first, we have:
\begin{equation*}
  \EE_n[\|\tilde v_{n+1}\|^2]
  \leq  \|\tilde v_n\|^2
  + \gamma_{n+1}^2p_n^2\EE_n[\|\nabla f_{n+1}^{\odot 2} - S(x_n)\|^2]\,.
\end{equation*}
Second, it also holds that:
\begin{equation*}
      \EE_n[\|\nabla f_{n+1}^{\odot 2} - S(x_n)\|^2]
      \leq \EE_n[\|\nabla f_{n+1}^{\odot 2}\|^2]
      \leq \EE_n[\|\nabla f_{n+1}\|^4]\,.
\end{equation*}
Then, using Assumption~\ref{hyp:moment}-\ref{hyp:moment-q} and the a.s. boundedness of the sequence $(x_n)$, there exists a finite random variable $C'_{\mathcal{K}}$ (independent of~$n$) s.t. $\EE_n[\|\nabla f_{n+1}\|^4] \leq C'_{\mathcal{K}}$.
Moreover, the sequence~$(p_n)$ is bounded and $\sum_n \gamma_{n+1}^2<\infty$. As a consequence, it holds that a.s:
$$
\sum_{n \geq 0} \gamma_{n+1}^2 p_n^2 \EE_n[\|\nabla f_{n+1}^{\odot 2} - S(x_n)\|^2] \leq C C'_{\mathcal{K}} \sum_{n \geq 0} \gamma_{n+1}^2  < + \infty\,.
$$
It follows that the Robbins-Siegmund theorem can be applied to the sequence~$\|\tilde{v}_n\|^2$ as for the sequence $\|\tilde{m}_n\|^2$ to obtain that $\sup_n\|\tilde v_n\|^2<\infty$ w.p.1.

\subsection{Proof of Th.~\ref{th:stab_nesterov}}

The proof of Th.~\ref{th:stab} easily adapts to Algorithm~\ref{algo-nesterov} by replacing~$V_n$ by $$\tilde{V}_n \eqdef F(x_n) + \frac{1}{2} \norm{m_n}^2\,.$$
 The boundedness of $(m_n)$ is an immediate consequence of the convergence of~$\tilde V_n$.

\subsection{Proof of Th.~\ref{th:clt}}

We shall use the following result.
\begin{theorem}[adapted from \cite{pelletier1998weak}, Th.~7]
\label{th:pelletier}
Let $k\geq 1$. On some probability space equipped with a filtration ${\mcF}=(\mcF_n)_{n\in\NN}$,
consider a sequence of r.v. on $\RR^k$ given by
$$
Z_{n+1} = (I+\gamma_{n+1} \bar H)Z_n+\gamma_{n+1} b_{n+1}+\sqrt{\gamma_{n+1}}\eta_{n+1}
$$
and $\EE[\|Z_0\|^2]<\infty$,
where $\bar H$ is a $k\times k$ Hurwitz matrix,
$(b_n)$ and $(\eta_n)$ are random sequences, and $\gamma_n=\gamma_0n^{-\alpha}$ for some $\gamma_0>0$ and $\alpha\in (0,1]$.
Let $\Omega_0\in {\mcF}_\infty$ have a positive probability. Assume that the following holds almost surely on~$\Omega_0$:
\begin{enumerate}[{i)}, leftmargin=*]
\item $\EE[\eta_{n+1}|{\mcF}_n]=0$.
\item There exists a constant $\bar b>2$ s.t. $\sup_{n\geq 0}\EE[\|\eta_{n+1}\|^{\bar b}|{\mcF}_n]<\infty$.
\item $\EE[\eta_{n+1}\eta_{n+1}^\T|{\mcF}_n] = \Sigma + \Delta_n$ where $\EE[\|\Delta_n\|\1_{\Omega_0}]\to 0$
and $\Sigma$ is a positive semidefinite matrix.
\item The sequence $(b_n)$ is the sum of two sequences $(b_{n,1})$ and $(b_{n,2})$, adapted to ${\mcF}$,
s.t. $\sup_{n\geq 0}\EE[\|b_{n,1}\|^2]<\infty$, $\EE[\|b_{n,1}\|\1_{\Omega_0}]\to 0$ and $b_{n,2}\to 0$ a.s. on $\Omega_0$.
\end{enumerate}
Then, given $\Omega_0$, $(Z_n)$ converges in distribution to the unique stationary distribution $\mu_{\star}$ of the generalized Ornstein-Uhlenbeck process
$$
dX_t = \bar H X_t dt + \sqrt \Sigma dB_t
$$
where $(B_t)$ is the standard Brownian motion and $\sqrt{\Sigma}$ is the unique positive semidefinite square root of $\Sigma$.
The distribution $\mu_{\star}$ is the zero mean Gaussian distribution with covariance matrix $\Gamma$
given as the solution to $(\bar H + \frac {\1_{\alpha=1}}{2\gamma_0}I_{k})\Gamma+\Gamma (\bar H + \frac {\1_{\alpha=1}}{2\gamma_0}I_{k})^\T = -\Sigma$.
\end{theorem}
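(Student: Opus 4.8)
The statement is Theorem~7 of \cite{pelletier1998weak} in the special case of an affine mean field $z\mapsto\bar H z$, localized to the event $\Omega_0$; the plan is to run Pelletier's argument and to explain how to restrict everything to $\Omega_0$, where \emph{i)}--\emph{iv)} hold only almost surely. Since the recursion is affine, I would first solve it explicitly: with $\Pi_{n,k}\eqdef\prod_{j=k+1}^n(I+\gamma_j\bar H)$ and $\Pi_{n,n}\eqdef I$,
\begin{equation*}
Z_n=\Pi_{n,0}Z_0+\sum_{k=1}^n\gamma_k\Pi_{n,k}b_k+\sum_{k=1}^n\sqrt{\gamma_k}\,\Pi_{n,k}\eta_k=:A_n+B_n+M_n.
\end{equation*}
Because $\bar H$ is Hurwitz, there is a Euclidean norm in which $\|I+\gamma\bar H\|\le1-c\gamma$ for all small $\gamma>0$ and some $c>0$, hence $\|\Pi_{n,k}\|\le Ce^{-c(\tau_n-\tau_k)}$. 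This yields $\sum_{k\le n}\gamma_k\|\Pi_{n,k}\|=O(1)$, $\sum_{k\le n}\gamma_k\|\Pi_{n,k}\|^2=O(1)$, $\max_{k\le n}\gamma_k\|\Pi_{n,k}\|^2\to0$, and the fact that the weights $\gamma_k\|\Pi_{n,k}\|$ and $\gamma_k\|\Pi_{n,k}\|^2$ put asymptotically vanishing mass on every fixed finite set of indices (a Toeplitz/Ces\`aro situation). Consequently $A_n\to0$ in $L^2$; writing $b_k=b_{k,1}+b_{k,2}$ and letting $B_n^{(i)}$ be the corresponding part of $B_n$, one gets $\EE[\|B_n^{(1)}\|\1_{\Omega_0}]\le\sum_k\gamma_k\|\Pi_{n,k}\|\,\EE[\|b_{k,1}\|\1_{\Omega_0}]\to0$ by the Toeplitz lemma and \emph{iv)}, while $\|B_n^{(2)}\|\le\sum_k\gamma_k\|\Pi_{n,k}\|\,\|b_{k,2}\|\to0$ on $\Omega_0$; so $A_n+B_n\to0$ in probability given $\Omega_0$ and, by Slutsky, it remains to find the weak limit of $M_n$.

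For each fixed $n$, $M_n=\sum_{k=1}^n\sqrt{\gamma_k}\,\Pi_{n,k}\eta_k$ is a sum of $(\mcF_k)$-martingale differences (the $\Pi_{n,k}$ are deterministic and $\EE[\eta_k\mid\mcF_{k-1}]=0$), so I would invoke the \emph{stable} central limit theorem for martingale triangular arrays, checking its two hypotheses a.s.\ on $\Omega_0$. For the conditional Lindeberg condition, bounding truncated conditional second moments by the $\bar b$-th ones and summing produces an estimate of the form $C\varepsilon^{2-\bar b}\sum_k(\gamma_k\|\Pi_{n,k}\|^2)^{\bar b/2}\le C\varepsilon^{2-\bar b}\big(\max_k\gamma_k\|\Pi_{n,k}\|^2\big)^{(\bar b-2)/2}O(1)\to0$, using $\bar b>2$ and \emph{ii)}. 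For the conditional covariance, $\sum_k\gamma_k\Pi_{n,k}\EE[\eta_k\eta_k^\T\mid\mcF_{k-1}]\Pi_{n,k}^\T=\Gamma_n+\sum_k\gamma_k\Pi_{n,k}\Delta_{k-1}\Pi_{n,k}^\T$ with $\Gamma_n\eqdef\sum_k\gamma_k\Pi_{n,k}\Sigma\Pi_{n,k}^\T$; the remainder has $\EE[\|\cdot\|\1_{\Omega_0}]\le\sum_k\gamma_k\|\Pi_{n,k}\|^2\,\EE[\|\Delta_{k-1}\|\1_{\Omega_0}]\to0$ by Toeplitz and \emph{iii)}, so the conditional covariance converges to the non-random limit $\Gamma\eqdef\lim_n\Gamma_n$. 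The stable CLT then gives $M_n\Rightarrow\mathcal N(0,\Gamma)$ $\mcF_\infty$-stably; since $\Gamma$ is deterministic, this entails in particular the convergence in distribution of $M_n$, hence of $Z_n$, under $\PP(\cdot\mid\Omega_0)$.

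To identify $\Gamma$: the $\Gamma_n$ obey $\Gamma_{n+1}=(I+\gamma_{n+1}\bar H)\Gamma_n(I+\gamma_{n+1}\bar H)^\T+\gamma_{n+1}\Sigma$ and are uniformly bounded; subtracting the solution $\bar\Gamma$ of $\bar H\bar\Gamma+\bar\Gamma\bar H^\T=-\Sigma$ and using the Hurwitz contraction together with $\gamma_n\to0$, $\sum_n\gamma_n=\infty$ gives $\Gamma_n\to\bar\Gamma$, i.e.\ the displayed Lyapunov equation with $\theta=0$ when $\alpha<1$. When $\alpha=1$ one keeps the factor $\sqrt{\gamma_n/\gamma_{n+1}}=1+\frac1{2\gamma_0}\gamma_{n+1}+o(\gamma_{n+1})$ implicit in the $\gamma_n^{-1/2}$-normalization underlying \cite{pelletier1998weak}, which promotes the effective drift to $\bar H+\frac1{2\gamma_0}I$ and makes $\Gamma$ solve $(\bar H+\frac1{2\gamma_0}I)\Gamma+\Gamma(\bar H+\frac1{2\gamma_0}I)^\T=-\Sigma$; this $\mathcal N(0,\Gamma)$ is precisely the invariant law $\mu_\star$ named in the statement. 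Finally, since \emph{i)}--\emph{iv)} hold only a.s.\ on $\Omega_0$ rather than $\PP$-a.s., I would localize: replace $\eta_k,b_k,\Delta_{k-1}$ by $\eta_k\1_{A_{k-1}}$ and so on, for $\mcF_{k-1}$-measurable sets $A_{k-1}\supseteq\Omega_0$ (up to null sets) on which the conditional bounds hold deterministically and $\PP(A_{k-1}\setminus\Omega_0)\to0$; the modified recursion satisfies every hypothesis $\PP$-a.s., coincides with the original one on $\Omega_0$ with probability tending to $1$, and carries the same conditional weak limit, which closes the argument.

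I expect two points to be delicate. The first is obtaining a \emph{conditional} (equivalently, stable) weak limit rather than an unconditional one: the martingale CLT machinery is designed for a whole probability space, and one must combine stability with the localization above so that the limit genuinely lives on $\Omega_0$ and is honestly $\mathcal N(0,\Gamma)$. The second is the covariance computation in the critical regime $\alpha=1$, where the bare limit of $\Gamma_n$ does not display the $\frac1{2\gamma_0}I$ correction and the discrete-to-continuous time rescaling must be tracked carefully. The Hurwitz decay estimates, the Toeplitz lemmas, the Lindeberg verification, and the control of $A_n$ and $B_n$ are routine.
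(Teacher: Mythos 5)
Your reconstruction takes a genuinely different route from the paper, which offers no argument of its own: the paper's ``proof'' is a one-sentence deferral to Pelletier's Theorem~7, plus the remark that the only change needed is to replace the inverse square root of $\Sigma$ by its Moore--Penrose inverse (to allow a singular $\Sigma$, as occurs in the application), and a reference to Karatzas--Shreve for the identification of the stationary law of the Ornstein--Uhlenbeck process. Your self-contained argument --- explicit solution of the affine recursion, exponential bounds on $\Pi_{n,k}$, Toeplitz estimates killing $A_n+B_n$, a stable martingale CLT for $M_n$ with the Lindeberg condition checked via the $\bar b$-th moments, the discrete Lyapunov recursion for $\Gamma_n$, and the localization of the a.s.-on-$\Omega_0$ hypotheses via $\mcF_n$-measurable approximations of $\Omega_0$ --- is the standard proof of such results and is sound; a side benefit is that it never needs $\Sigma$ to be invertible, so the degeneracy issue the paper patches by hand disappears automatically.

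There is, however, one genuine problem, in the $\alpha=1$ covariance identification. Your own computation is correct and applies verbatim for $\alpha=1$: the recursion $\Gamma_{n+1}=(I+\gamma_{n+1}\bar H)\Gamma_n(I+\gamma_{n+1}\bar H)^\T+\gamma_{n+1}\Sigma$ converges, for any stepsize with $\gamma_n\to0$ and $\sum_n\gamma_n=\infty$, to the solution of $\bar H\Gamma+\Gamma\bar H^\T=-\Sigma$ (check the scalar case $\bar H=-a$, $\gamma_n=\gamma_0/n$: the limit is $\sigma^2/(2a)$, with no $1/(2\gamma_0)$ correction). The subsequent claim that for $\alpha=1$ one should ``keep the factor $\sqrt{\gamma_n/\gamma_{n+1}}$ implicit in the normalization'' and thereby promote the drift to $\bar H+\tfrac{1}{2\gamma_0}I_k$ is not justified by the recursion as stated --- the recursion for $Z_n$ is given explicitly with matrix $\bar H$ and contains no such hidden factor --- and it contradicts the computation you just carried out. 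That $\tfrac{1}{2\gamma_0}I_k$ shift belongs to the passage from the raw iterates to $Z_n=\gamma_n^{-1/2}(\theta_n-\theta_\star)$ in Pelletier's original formulation; here that passage has already been performed before the theorem is invoked (in the paper's application the matrix supplied as $\bar H$ is $\bar G_\infty=G_\infty+\tfrac{\1_{\alpha=1}}{2\gamma_0}I$, which already carries the shift), so the equation actually used downstream is $\bar H\Gamma+\Gamma\bar H^\T=-\Sigma$, consistent with the Ornstein--Uhlenbeck characterization in the statement but not with the displayed Lyapunov equation when $\alpha=1$. In short: trust your direct computation and prove $\bar H\Gamma+\Gamma\bar H^\T=-\Sigma$ for all $\alpha\in(0,1]$; do not retrofit the extra $\theta I_k$, which double-counts the stepsize correction relative to the recursion as written.
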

\begin{proof}

The proof is identical to the proof of \cite[Th.~7]{pelletier1998weak}, only substituting the
inverse of the square root of $\Sigma$ by the Moore-Penrose inverse. Finally, the uniqueness of the
stationary distribution $\mu_{\star}$ and its expression follow from \cite[Th.~6.7, p. 357]{KarShr91}
\end{proof}

We define $ v_n = \bar v_n+\delta_n$ where $\delta_0=0$, $\bar v_0 = v_0$ and
\begin{align*}
\delta_{n+1} &= (1 - \gamma_{n+1} q_n ) \delta_n
  + \gamma_{n+1} (p_{n}-q_n q_\infty^{-1} p_\infty) S(x_n) \\
\bar v_{n+1} &= (1 - \gamma_{n+1} q_n ) \bar v_n
+ \gamma_{n+1} q_n q_\infty^{-1} p_\infty S(x_{n})
+ \gamma_{n+1} p_{n}(\nabla f(x_n, \xi_{n+1})^{\odot 2}- S(x_n)) \,.
\end{align*}
For every $z=(v,m,x) \in \cZ_+$ and $\delta \geq 0$, we define
$$
r_n(z,\delta) \eqdef
\begin{bmatrix}
  q_n q_\infty^{-1} p_\infty (S(x-\gamma_{n} \frac{m}{\sqrt{v+\delta + \varepsilon}})-S(x)) \\
h_n (\nabla F(x-\gamma_{n}\frac{m}{\sqrt{v+\delta + \varepsilon}}) - \nabla F(x))\\
\frac{\gamma_n}{\gamma_{n+1}} (\frac{1}{\sqrt{v + \varepsilon}}-\frac{1}{\sqrt{v+\delta + \varepsilon}})\odot m
\end{bmatrix}\,.
$$
Moreover, for every $z=(v,m,x) \in \cZ_+$ and every $n\in \NN$, we set
$$
g_n(z) =
\begin{bmatrix}
  q_n q_\infty^{-1} p_\infty S(x)  - q_n  v  \\
h_n \nabla F(x)- r_n m \\
-\frac{\gamma_n}{\gamma_{n+1}} \frac{m}{\sqrt{v + \varepsilon}}
\end{bmatrix}\,.
$$
Defining $\zeta_n = (\bar v_n,m_n,x_{n-1})$ and recalling the definition of $(\eta_n)$ from Eq.~\eqref{eq:noise_eta},
we have the decomposition
$$
\zeta_{n+1} = \zeta_n+\gamma_{n+1}g_n(\zeta_n) + \gamma_{n+1}\eta_{n+1}+\gamma_{n+1}r_n(\zeta_n,\delta_n)\,.
$$
Define $z_{\star}\eqdef (x_{\star},0,v_{\star})$. Note that  $g_n(z_{\star})=0$. Evaluating the Jacobian matrix $G_n$ of
$g_n$ at $z_{\star}$, we obtain that there exist constants $C>0$, $\bar M>0$ and $n_0\in \NN$ s.t. for all $n\geq n_0$,
\begin{equation}
  \label{eq:1}
  \|g_n(z) - G_n(z-z_{\star})\|\leq C\|z-z_{\star}\|^2\quad (\forall z\in B(z_{\star},\bar M))\,,
\end{equation}
where $G_n$ is given by
$$
G_n\eqdef
\begin{bmatrix}
  -q_n I_d & 0 & q_n q_\infty^{-1} p_\infty \nabla S(x_{\star}) \\
0 & -r_n I_d & h_n \nabla^2 F(x_{\star}) \\
0 & -\frac{\gamma_n}{\gamma_{n+1}}V & 0
\end{bmatrix}\,,
$$
where $\nabla S$ is the Jacobian of $S$ and the matrix $V$ is defined in Eq.~\eqref{eq:V_matrix}. We define
$$
G_\infty \eqdef \lim_nG_n=
\begin{bmatrix}
  -q_\infty I_d & 0 & p_\infty \nabla S(x_{\star}) \\
0 & -r_\infty I_d & h_\infty \nabla^2 F(x_{\star}) \\
0 & -V & 0
\end{bmatrix}\,.
$$
One can verify that $\G$ is Hurwitz, and that the largest real part of its eigenvalues
is $-L'$, where $L'\eqdef L\wedge q_\infty$ and $L$ is defined in Eq.~\eqref{eq:L}.

We define $\Omega^{(0)}\eqdef \{ z_n\to
z_{\star}\}$. We assume $\PP(\Omega^{(0)})>0$.  Using for instance \cite[Lem.~4 and Lem.~5]{delyon1999convergence}, it holds that $\delta_n(\omega)\to 0$ for
every $\omega\in\Omega^{(0)}$, and since $x_n(\omega)-x_{n-1}(\omega)\to 0$
on that set, we obtain that $\Omega^{(0)} = \{\zeta_n\to z_{\star}\}$.  Let $M\in (0,\bar M)$
be a constant, whose value will be specified later on.  For every
$N_0 \in\NN$, define $\Omega^{(0)}_{N_0}\eqdef \{\zeta_n\to z_{\star}\text{ and
}\sup_{n\geq N_0}\|\zeta_n-z_{\star}\|\leq M\}$.  We seek to show that
$\sqrt{\gamma_n}^{-1}(\zeta_n-z_{\star})\Rightarrow \nu$ given $\Omega^{(0)}$, for
some Gaussian measure $\nu$, using Th.~\ref{th:pelletier}.
As $\Omega^{(0)}_{N_0}\uparrow \Omega^{(0)}$, it is
sufficient to show that the latter convergence holds given $\Omega^{(0)}_{N_0}$,
for every $N_0$ large enough. From now on, we consider that $N_0$ is fixed. We define the sequence
$(\tilde \zeta_n)_{n\geq N_0}$ as $\tilde \zeta_{N_0}=\zeta_{N_0}$ and for every $n\geq N_0$,
\begin{align*}
\tilde \zeta_{n+1} &= \tilde \zeta_n + \gamma_{n+1} \tilde g_n(\tilde\zeta_n) + \gamma_{n+1}(\eta_{n+1}+r_n(\tilde\zeta_n,\delta_n))\1_{{\mathcal A}_n}
\end{align*}
where ${\mathcal A}_n$ is the event defined by
$$
{\mathcal A}_n\eqdef \bigcap_{k=N_0}^n\{\|x_k-x_{\star}\|\leq M\}\cap\{\|\tilde\zeta_n-z_{\star}\|\leq M\}
$$ and
$$
\tilde g_n(z) \eqdef g_n(z)\1_{\|z-z_{\star}\|\leq M} - K (z-z_{\star})\1_{\|z-z_{\star}\|> M}\,,
$$
where $K>0$ is a large constant which will be specified later on.
The sequences $(\tilde \zeta_n)_{n\geq N_0}$ and $(\zeta_n)_{n\geq N_0}$ coincide on $\Omega^{(0)}_{N_0}$.
Thus, it is sufficient to study the weak convergence of $(\tilde \zeta_n)_{n\geq N_0}$.
\medskip

\noindent {\bf An estimate of $\|r_n(\tilde\zeta_n,\delta_n)\|\1_{{\mathcal A}_n}$.}
We start by studying the sequence $(\|\delta_n\|\1_{{\mathcal A}_n})$.
Unfolding the update rule defining $\delta_n$ and using the fact that $(q_n)$ is a sequence of positive reals converging to $q_\infty > 0$, we obtain that
\begin{align*}
  \|\delta_n\|\1_{{\mathcal A}_n} &\leq
  \sum_{k=1}^n \left[\prod_{j=k+1}^n |1 - \gamma_{j} q_{j-1}|\right] \gamma_k |p_{k-1}-q_{k-1}q_\infty^{-1} p_\infty| \|S(x_{k-1})\|\1_{{\mathcal A}_n} \\
&\leq C \sum_{k=1}^n \exp\left(-\beta \sum_{j=k+1}^n\gamma_j\right) \gamma_k |p_{k-1}-q_{k-1}q_\infty^{-1} p_\infty| \eqdef w_n \,,
\end{align*}
for some $\beta>0$.
The sequence $(w_n)$ is deterministic and converges to zero by \cite[Lem.~4]{delyon1999convergence}.
There exists $n_1\geq n_0$ s.t. $w_n\leq M$. As $v \mapsto \frac{1}{\sqrt{v + \varepsilon}}$ is Lipschitz and $\nabla F$ and $S$ are locally Lipschitz, for every $z=(v,m,x)$ and $\delta$
s.t. $\|z-z_{\star}\|\leq M$ and $\|\delta\|\leq M$,  we have
\begin{align*}
\|r_n(z,\delta)\| &\leq C\gamma_{n+1}\|(v+\delta+ \varepsilon)^{\odot -\frac 12}\|\|m\| + C\|(v+\delta+ \varepsilon)^{\odot -\frac 12}-(v+\varepsilon)^{\odot -\frac 12}\|\|m\| \\
& \leq C\gamma_{n+1}\|z-z_{\star}\|+C \|\delta\|\|z-z_{\star}\|\,.
\end{align*}
This implies that for every $n\geq n_1$,
\begin{equation}
  \label{eq:rdelta}
  \|r_n(\tilde\zeta_n,\delta_n)\|\1_{{\mathcal A}_n} \leq C(\gamma_{n+1}+w_n)\|\tilde\zeta_n-z_{\star}\|\,.
\end{equation}

\medskip
\noindent {\bf Tightness of $\sqrt{\gamma_n}^{-1}(\tilde\zeta_n-z_{\star})$.}
We decompose
\begin{multline}
\label{eq:zetatilde}
  \tilde \zeta_{n+1} - z_{\star} = (I_{3d}+\gamma_{n+1} G_n)(\tilde \zeta_n -
  z_{\star}) +
  \gamma_{n+1}\left(g_n(\tilde\zeta_n)-G_n(\tilde\zeta_n-z_{\star})\right)\1_{\|\tilde\zeta_n-z_{\star}\|\leq
    M} \\ -
  \gamma_{n+1}(K+G_n)(\tilde\zeta_n-z_{\star})\1_{\|\tilde\zeta_n-z_{\star}\|>
    M} + \gamma_{n+1}(\eta_{n+1}+r_n(\tilde\zeta_n,\delta_n))\1_{{\mathcal A}_n}\,.
\end{multline}
For a given $t>0$, we write $\G = B_t^{-1} G_t B_t$ the Jordan-like decomposition of $\G$,
where the ones of the second diagonal of the usual Jordan decomposition are replaced by $t$,
and where~$B_t$ is some invertible matrix. We define  $S_n\eqdef B_t(\tilde\zeta_n-z_{\star})$.
Setting $G_n^{(t)} \eqdef B_t G_n B_t^{-1}$, we obtain
\begin{multline*}
  S_{n+1}  = (I_{3d}+\gamma_{n+1} G_n^{(t)})S_n +
  \gamma_{n+1}B_t\left(g_n(\tilde\zeta_n)-G_n(\tilde\zeta_n-z_{\star})\right)\1_{\|\tilde\zeta_n-z_{\star}\|\leq
    M} \\ -
  \gamma_{n+1}(K+G_n^{(t)})S_n\1_{\|\tilde\zeta_n-z_{\star}\|>
    M} + \gamma_{n+1}B_t(\eta_{n+1}+r_n(\tilde\zeta_n,\delta_n))\1_{{\mathcal A}_n}\,.
\end{multline*}
Choose $A\in (0,2L')$ and $A'\in (A,2L')$. There exists $\bar \gamma$ and $t>0$ s.t. for every $\gamma<\bar\gamma$,
$\|I+\gamma G_t\|_2\leq 1-\gamma (A'+2L')/2$, where $\|\cdot\|_2$ is the spectral norm. As $G_n^{(t)}\to G^t$, there exists $n_2\geq n_1$, such that
for all $n\geq n_2$, $\|I+\gamma G_n^{(t)}\|_2\leq 1-\gamma A'$.
Recall the notation $\EE_n = \EE[ \cdot \, | \, \mcF_n ]$.
 We expand $\|S_{n+1}\|^2$ and use the inequality $\left\|g_n(\tilde\zeta_n)-G_n(\tilde\zeta_n-z_{\star})\right\|^2\1_{\|\tilde\zeta_n-z_{\star}\|\leq
    M}\leq C\|S_n\|^2$
to obtain after straightforward algebra
\begin{multline*}
  \EE_n\|S_{n+1}\|^2  \leq  (1-\gamma_{n+1}A')\|S_n\|^2
  + C\gamma_{n+1}^2\|S_n\|^2\\
  +C\gamma_{n+1}^2(\EE_n\|\eta_{n+1}\|^2+\|r_n(\tilde\zeta_n,\delta_n)\|^2)\1_{{\mathcal A}_n}\\
+2\gamma_{n+1}\real{S_n^*B_t\left(g_n(\tilde\zeta_n)-G_n(\tilde\zeta_n-z_{\star})\right)}\1_{\|\tilde\zeta_n-z_{\star}\|\leq M}\\
-2\gamma_{n+1}\real{S_n^*(K+G_n^{(t)})S_n}\1_{\|\tilde\zeta_n-z_{\star}\|> M}
+2\gamma_{n+1}\real{S_n^*B_t r_n(\tilde\zeta_n,\delta_n)} \1_{{\mathcal A}_n}\,.
\end{multline*}
Choose $c\eqdef (A'-A)/2$. If $M$ is chosen small enough,
$$
\|g_n(\tilde\zeta_n)-G_n(\tilde\zeta_n-z_{\star})\|\1_{\|\tilde\zeta_n-z_{\star}\|\leq M} \leq \frac c2\|B_t\|^{-1}\|B_t^{-1}\| \|\tilde\zeta_n-z_{\star}\|\,.
$$
Moreover, choosing $K>\sup_n\|G_n^{(t)}\|_2$, it holds that $\real{S_n^*(K+G_n^{(t)})S_n}\geq 0$. Then,
\begin{multline*}
  \EE_n\|S_{n+1}\|^2  \leq  (1-\gamma_{n+1}(A'-c))\|S_n\|^2 +
  C\gamma_{n+1}^2\|S_n\|^2\\
  + C\gamma_{n+1}^2(\EE_n\|\eta_{n+1}\|^2+\|r_n(\tilde\zeta_n,\delta_n)\|^2)\1_{{\mathcal A}_n}
+2\gamma_{n+1}\|B_t\|\|S_n\| \|r_n(\tilde\zeta_n,\delta_n)\| \1_{{\mathcal A}_n}\,.
\end{multline*}
Using Eq.~(\ref{eq:rdelta}),
\begin{multline*}
  \EE_n\|S_{n+1}\|^2  \leq  (1-\gamma_{n+1}(A'-c- w_n))\|S_n\|^2 +
  C\gamma_{n+1}^2(1+w_n^2)\|S_n\|^2\\
  + C\gamma_{n+1}^2\EE_n\|\eta_{n+1}\|^2\1_{{\mathcal A}_n}\,.
\end{multline*}
Therefore, there exists $n_3\geq n_2$ s.t. for all $n\geq n_3$,
\begin{equation*}
  \EE\|S_{n+1}\|^2  \leq  (1-\gamma_{n+1}A)\EE\|S_n\|^2 + C\gamma_{n+1}^2\EE(\|\eta_{n+1}\|^2\1_{\|x_n-x_{\star}\|\leq M})\,.
\end{equation*}
The second expectation in the righthand side is bounded uniformly in $n$ by the condition~(\ref{eq:momentsTCL}).
Using \cite[Lem.~4 and Lem.~5]{delyon1999convergence}, we conclude that $\sup_n \gamma_n^{-1}\EE\|S_n\|^2<\infty$.
Therefore, $\sup_n\gamma_n^{-1}\EE\|\tilde \zeta_n-z_{\star}\|^2<\infty$, which in turn implies
$\sup_n\gamma_n^{-1}\EE(\|\zeta_n-z_{\star}\|^2\1_{\Omega^{(0)}_{N_0}})<\infty$.
\medskip

\noindent{\bf Strongly perturbed iterations.} We define $\tilde y_n=\sqrt{\gamma_n}^{-1}(\tilde \zeta_n-z_{\star})$.
Define
$$
\bar G_n \eqdef \gamma_{n+1}^{-1}\left(\sqrt{\frac{\gamma_n}{\gamma_{n+1}}} -1\right)I_{3d}+ \sqrt{\frac{\gamma_n}{\gamma_{n+1}}}G_n\,.
$$
The sequence $\bar G_n$ converges to $\bar G_\infty\eqdef \G+ \frac {\1_{\alpha=1}}{2\gamma_0}I_{3d}$.
Recalling Eq.~(\ref{eq:zetatilde}), we can write
$$
  \tilde y_{n+1}  = (I_{3d}+\gamma_{n+1}\bar G_\infty)\tilde y_n +\gamma_{n+1} \bar r_n + \sqrt{\gamma_{n+1}} \bar\eta_{n+1}
$$
where $\bar \eta_{n+1} = \eta_{n+1}\1_{{\mathcal A}_n}$ and $\bar r_n = \bar r_{n,1}+\bar r_{n,2}+\bar r_{n,3}$, where
\begin{align*}
  \bar r_{n,1} &\eqdef \sqrt{\gamma_{n+1}}^{-1}r_n(\tilde\zeta_n,\delta_n)\1_{{\mathcal A}_n} + (\bar G_n-\bar G_\infty)\tilde y_n\\
\bar r_{n,2} &\eqdef \sqrt{\gamma_{n+1}}^{-1}\left(g_n(\tilde\zeta_n)-G_n(\tilde\zeta_n-z_{\star})\right)\1_{\|\tilde\zeta_n-z_{\star}\|\leq M}\\
\bar r_{n,3} &\eqdef  - \sqrt{\gamma_{n+1}}^{-1}(K+G_n)(\tilde\zeta_n-z_{\star})\1_{\|\tilde\zeta_n-z_{\star}\|> M} \,.
\end{align*}
We now check that the assumptions of Th.~\ref{th:pelletier} are fulfilled.
On the event $\Omega^{(0)}_{N_0}$, we recall that~$\tilde \zeta_n = \zeta_n$, hence $\bar r_{n,3}$
is identically zero. Moreover, using Eq.~(\ref{eq:rdelta}), it holds that for all $n$ large enough,
$$
\|\bar r_{n,1}\| \leq C\left(\sqrt{\frac{\gamma_{n}}{\gamma_{n+1}}} (\gamma_{n+1}+w_n)+ \|\bar G_n-\bar G_\infty\|\right)\|\tilde y_n\|
$$
and therefore, $\EE[\|\bar r_{n,1}\|^2] \to 0$. Now consider the term $\bar r_{n,2}$. By Eq.~(\ref{eq:1}),
$$
\|\bar r_{n,2}\| \leq C\sqrt{\gamma_{n+1}}^{-1}\|\tilde \zeta_n-z_{\star}\|^2\1_{\|\tilde\zeta_n-z_{\star}\|\leq M}\,.
$$
Thus, $\|\bar r_{n,2}\|^2 \leq C\|\tilde y_n\|^2$ which implies that $\sup_{n\geq N_0} \EE[\|r_{n,2}\|^2]<\infty$. Moreover,
$\EE[\|\bar r_{n,2}\|] \leq C\sqrt{\gamma_{n+1}}\EE \|\tilde y_n\|^2$ tends to zero.
Finally, consider $\bar \eta_{n+1}$.
Using condition~(\ref{eq:momentsTCL}), there exist $M>0$ and $b_M>4$ s.t.
\begin{align*}
  \EE_n[\|\bar \eta_{n+1}\|^{b_M/2}] &\leq \EE_n[\|\eta_{n+1}\|^{b_M/2}]\1_{\|x_n-x_{\star}\|\leq M} \\
&\leq
C 
\EE_n[\|\nabla f(x_n,\xi_{n+1})\|^{b_M}]
\1_{\|x_n-x_{\star}\|\leq M} \leq C \,.
\end{align*}
Moreover, $\EE_n[\bar \eta_{n+1}]=0$ and finally, almost surely on $\Omega^{(0)}_N$,
$\EE_n[\bar \eta_{n+1}\bar \eta_{n+1}^\T]$ converges to
\begin{equation}
\Sigma \eqdef
\left[ \begin{array}[h]{cc}
   \EE_\xi\left[
  \begin{bmatrix}
    p_\infty(\nabla f(x_{\star},\xi)^{\odot 2}-S(x_{\star})) \\ h_\infty\nabla f(x_{\star},\xi)
  \end{bmatrix}\begin{bmatrix}
    p_\infty(\nabla f(x_{\star},\xi)^{\odot 2}-S(x_{\star})) \\ h_\infty\nabla f(x_{\star},\xi)
  \end{bmatrix}^\T\right] &
\begin{array}[h]{c}
  0 \\ 0
\end{array}
\\
   \begin{array}[h]{cc}
     0 & \ \ \ \ \ \ \ \ \ \ \ \ \ \ \ \ \ \ \ \ \ \ \ \ \ \ \ \ \ \ 0
   \end{array} & 0
 \end{array}\right]\,.\label{eq:Sigma}
\end{equation}
Therefore, the assumptions of Th.~\ref{th:pelletier} are fulfilled for the sequence $\tilde y_n$.
We obtain the desired result for the sequence $(m_n, x_{n-1})$. We now show that the same result also holds for the sequence $(m_n, x_n)$. For this purpose, observe that
  $$
  \frac{1}{\sqrt{\gamma_n}}
\begin{bmatrix}
  m_n \\ x_n-x_{\star}
\end{bmatrix} =
\frac{1}{\sqrt{\gamma_n}}
\begin{bmatrix}
  m_n \\ x_{n-1}-x_\star
\end{bmatrix} +
\begin{bmatrix}
  0 \\ \frac{1}{\sqrt{\gamma_n}}(x_n-x_{n-1})
\end{bmatrix}\,.
  $$
  Then, notice that $\|\frac{x_n - x_{n-1}}{\sqrt{\gamma_n}}\| = \sqrt{\gamma_n} \|\frac{m_n}{\sqrt{v_n + \varepsilon}}\| \leq \sqrt{\frac{\gamma_n}{\varepsilon}} \|m_n\| \to 0$ as $n \to \infty$ since it is assumed that $z_n \to z_\star$ (which implies in particular that $m_n \to 0$). Hence, it holds that $\sqrt{\gamma_n}^{-1}(x_n - x_{n-1})$ converges a.s. to $0$. We conclude by invoking Slutsky's lemma.

\noindent\textbf{Proof of Eq.~\eqref{eq:cov}.}
We have the subsystem:
\begin{equation}
\tilde H \Gamma + \Gamma \tilde H^\T =
\begin{bmatrix}
-h_\infty^2 \mathcal{Q} & 0 \\
0 & 0
\end{bmatrix}\qquad\text{where }\tilde H \eqdef
\begin{bmatrix}
  (\theta - r_\infty) I_d & h_\infty \nabla^2F(x_{\star}) \\
-V & \theta I_d
\end{bmatrix}\label{eq:lyap-reduced}
\end{equation}
and where $\mathcal{Q} \eqdef \textrm{Cov}\left(\nabla
  f(x_{\star},\xi)\right)$.  The next step is to triangularize the matrix
$\tilde H$ in order to decouple the blocks of $\Gamma$.  For
every $k=1,\dots,d$, set
$\nu_k^\pm \eqdef -\frac{r_\infty}{2}\pm\sqrt{r_\infty^2/4-h_\infty\pi_k}$
with the convention that $\sqrt{-1} =
\imath$ (inspecting the characteristic polynomial of $H$, these 
are the eigenvalues of $H$). Set
$M^\pm\eqdef\diag{(\nu_1^\pm,
\cdots,\nu_d^\pm)}$ and $R^\pm\eqdef
V^{-\frac 12}PM^\pm P^\T V^{-\frac 12}$.  Using the identities $M^++M^-=-r_\infty I_d$ and
$M^+M^-=h_\infty\diag{(\pi_1,\cdots,\pi_d)}$,
it can be checked that
$$
\cR\tilde H =\begin{bmatrix}
  R^- V + \theta I_d & 0 \\ -V & VR^+ + \theta I_d
\end{bmatrix}\cR,\text{ where }\cR\eqdef
\begin{bmatrix}
  I_d & R^+ \\ 0 & I_d
\end{bmatrix}\,.
$$
Set $\tilde \Gamma \eqdef \cR\Gamma\cR^\T$. Denote by
$(\tilde \Gamma_{i,j})_{i,j=1,2}$ the blocks of $\tilde \Gamma$.
Note that $\tilde \Gamma_{2,2} = \Gamma_{2,2}$. By left/right multiplication of Eq.~(\ref{eq:lyap-reduced})
respectively by $\cR$ and $\cR^\T$, we obtain

\begin{align}
  &(R^-V+\theta I_d) \tilde\Gamma_{1,1}+\tilde\Gamma_{1,1}(VR^-+\theta I_d) = - h_\infty^2 \mathcal{Q} \label{eq:lapremiere}\\
& (R^-V+\theta I_d) \tilde\Gamma_{1,2}+\tilde\Gamma_{1,2} (R^+V+\theta I_d) = \tilde\Gamma_{1,1}V \label{eq:ladeuxieme}\\
& (VR^++\theta I_d) \tilde\Gamma_{2,2} + \tilde\Gamma_{2,2} (R^+V+\theta I_d) = V\tilde\Gamma_{1,2} + \tilde\Gamma_{1,2}^\T V\,. \label{eq:laderniere}
\end{align}
Set $\bar \Gamma_{1,1} = P^{-1}V^{\frac 12}\tilde\Gamma_{1,1} V^{\frac 12}P$.
Define $C\eqdef P^{-1}V^{\frac 12} \mathcal{Q} V^{\frac 12}P$.
Eq.~(\ref{eq:lapremiere}) yields
$$
(M^-+\theta I_d) \bar\Gamma_{1,1} + \bar\Gamma_{1,1} (M^-+\theta I_d) = -h_\infty^2C\,.
$$
Set $\bar \Gamma_{1,2} = P^{-1}V^{\frac 12}\tilde\Gamma_{1,2} V^{-\frac 12}P$.
Eq.~(\ref{eq:ladeuxieme}) is rewritten $(M^-+\theta I_d) \bar \Gamma_{1,2}+\bar \Gamma_{1,2} (M^++\theta I_d) = \bar \Gamma_{1,1}$.
The component $(k,\ell)$ is given by
$$
\bar\Gamma_{1,2}^{k,\ell} = (\nu_k^-+\nu_\ell^++2\theta)^{-1}\bar\Gamma_{1,1}^{k,\ell} = \frac{-h_\infty^2C_{k,\ell}}{(\nu_k^-+\nu_\ell^++2\theta)(\nu_k^-+\nu_\ell^-+2\theta)}\,.
$$
Set finally $\bar \Gamma_{2,2} = P^{-1}V^{-\frac 12}\Gamma_{2,2} V^{-\frac 12}P$.
Eq.~(\ref{eq:laderniere}) becomes
$$
(M^++\theta I_d)\bar \Gamma_{2,2} + \bar \Gamma_{2,2}(M^++\theta I_d) = \bar \Gamma_{1,2} + \bar \Gamma_{1,2}^\T\,.
$$
Thus,
\begin{multline*}
\bar\Gamma_{2,2}^{k,\ell}
= \frac{\bar\Gamma_{1,2}^{k,\ell}+\bar\Gamma_{1,2}^{\ell,k}}{\nu_k^++\nu_\ell^++2\theta}\\
=  \frac{-h_\infty^2C_{k,\ell}}{(\nu_k^++\nu_\ell^++2\theta)(\nu_k^-+\nu_\ell^-+2\theta)}\left(\frac{1}{\nu_k^-+\nu_\ell^++2\theta}
+\frac{1}{\nu_k^++\nu_\ell^-+2\theta}\right)\,.
\end{multline*}
After tedious but straightforward computations, we obtain
$$
\bar\Gamma_{2,2}^{k,\ell} =
\frac{h_\infty^2 C_{k,\ell}}
{(r_\infty -2\theta)( h_\infty(\pi_k+\pi_\ell)
+ 2 \theta (\theta - r_\infty))
+\frac{h_\infty^2(\pi_k-\pi_\ell)^2}{2(r_\infty - 2 \theta)}}\,,
$$
and the result is proved.


\section{Proofs for Section~\ref{sec:avt}}
\label{sec:proofs_avt}

\subsection{Preliminaries}
\label{subsec:prel-avt}

Most of the avoidance of traps results in the stochastic approximation
literature deal with the case where the ODE that underlies the stochastic
algorithm under study is an autonomous ODE $\dot \sz = h(\sz)$.  In this
setting, a point $z_\star \in \zer h$ is called a trap if $h(z)$ admits an
expansion around $z_\star$ of the type $h(z) = D(z-z_\star) +
o(\|z-z_\star\|)$, where the matrix $D$ has at least one eigenvalue which real
part is (strictly) positive.  Initiated by Pemantle \cite{pem-90} and by
Brandi\`ere and Duflo \cite{bra-duf-96}, the most powerful class of techniques
for establishing avoidance of traps results makes use of Poincar\'e's invariant
manifold theorem for the ODE $\dot \sz = h(\sz)$ in a neighborhood of some
point $z_\star \in \zer h$.  The idea is to show that with probability~1, the
stochastic algorithm strays away from the maximal invariant manifold of the ODE
where the convergence to $z_\star$ of the ODE flow can take place.  As
previously mentioned, since we are dealing with algorithms derived from
non-autonomous ODEs, we extend the results of \cite{pem-90,bra-duf-96} to this
setting.  The proof of Th.~\ref{th:av-traps} relies on a non-autonomous version
of Poincar\'e's theorem.  We borrow this result from the rich literature that
exists on the subject \cite{dal-krei-(livre)74, klo-ras-(livre)11}.

Let us start by setting the context for the non-autonomous version that we
shall need for the invariant manifold theorem. Given an integer $d > 0$ and a
matrix $D \in \RR^{d\times d}$, consider the linear autonomous differential
equation
\begin{equation}
\label{auton}
\dot \sz(t) = D \sz(t) ,
\end{equation}
which solution is obviously $\sz(t) = e^{Dt} \sz(0)$ for $t \in \RR$. Let us
factorize $D$ as in~\eqref{jordan}, and write $D = Q \Lambda Q^{-1}$ with
$\Lambda = \begin{bmatrix} \Lambda^- \\ & \Lambda^+ \end{bmatrix}$ where we
recall that the Jordan blocks that constitute $\Lambda^- \in \RR^{d^-\times
d^-}$ (respectively~$\Lambda^+ \in \RR^{d^+ \times d^+}$) are those that
contain the eigenvalues $\lambda_i$ of $D$ such that $\Re\lambda_i \leq 0$
(respectively $\Re\lambda_i > 0$). Let us assume here that both $d^-$ and $d^+$
are positive.  It will be convenient to work in the basis of the columns of $Q$
by making the variable change
\[
z \mapsto y = \begin{bmatrix} y^- \\ y^+ \end{bmatrix} = Q^{-1} z,
\]
where $y^\pm \in \RR^{d^\pm}$. In this new basis, the ODE~\eqref{auton} is
written as
\begin{equation}
\label{auton-base}
\begin{bmatrix} \dot \sy^- \\ \dot \sy^+ \end{bmatrix} =
\begin{bmatrix} \Lambda^- \\ & \Lambda^+ \end{bmatrix}
\begin{bmatrix}  \sy^- \\ \sy^+ \end{bmatrix} ,
\end{equation}
which solution is $\sy^\pm(t) = \exp(t \Lambda^\pm) \sy^\pm(0)$. One can
readily check that for each couple of real numbers $\alpha^+$ and $\alpha^-$
that satisfy
\begin{equation}
\label{eq:alpha+-}
0 < \alpha^- < \alpha^+ < \min \{ \Re\lambda_i \, : \, \Re\lambda_i > 0 \},
\end{equation}
there exists a so-called exponential dichotomy of the ODE solutions, which
amounts in our case to the existence of two constants $K^-, K^+ \geq 1$ such
that
\begin{align*}
\| \exp(t \Lambda^-) \|
    &\leq K^- e^{\alpha^- t} \quad \text{for } t \geq 0, \\
\| \exp(t \Lambda^+) \|
    &\leq K^+ e^{\alpha^+ t} \quad \text{for } t \leq 0,
\end{align*}
see, \emph{e.g.}, \cite{hor-joh-(livre)-topics}.

We now consider a non-autonomous perturbation of this ODE, which is
represented in the basis of the columns of $Q$ as
\begin{equation}
\label{nonauton}
\dot \sy(t) = h(\sy(t), t) \quad \text{with} \quad
h(y,t) = \begin{bmatrix} \Lambda^- \\ & \Lambda^+ \end{bmatrix} y
            + \varepsilon(y, t) ,
\end{equation}
and $\varepsilon: \RR^d \times \RR \to \RR^d$ is a continuous function.  In the
sequel, we shall be interested in the asymptotic behavior of this equation for
the large values of~$t$, and therefore, restrict our study to the interval $\II
= [t_0, \infty)$ for some given $t_0 \geq 0$ that we shall fix later.  We
assume that $\varepsilon(0,\cdot) = 0$ on $\II$. We denote as $\phi : \II
\times \II \times \RR^d \to \RR^d$ the so-called general solution
of~\eqref{nonauton}, which is defined by the fact that $\phi(\cdot, t, x)$ is
the unique noncontinuable solution of~\eqref{nonauton} such that $\phi(t , t,
x) = x$ for $t\in \II$ and $x \in \RR^d$, assuming this solution exists and is
unique for each $(x,t) \in \RR^d \times \II$.

In the linear autonomous case provided by the ODE~\eqref{auton-base}, the
subspace
\[
\mathcal G = \left\{
  \left( t, \begin{bmatrix} y^- \\ 0 \end{bmatrix}\right)
 \in \RR \times \RR^d \ : \ y^- \in \RR^{d^-} \right\}
\]
is trivially invariant in the sense that if $(t, y) \in \mathcal G$, then,
$(s,\phi(s,t,y)) \in \mathcal G$ for each $s \in \RR$. This concept can be
generalized to the non-linear and non-autonomous case. We say that the
$\mathcal C^1$ function $w : \RR^{d^-} \times \II \to \RR^{d^+}$ defines a
global non-autonomous invariant manifold for the ODE~\eqref{nonauton} if
$w(0,t) = 0$ for all $t \in \II$, and, furthermore, if for each $t \in \II$ and
each $y^- \in \RR^{d^-}$, writing $y = (y^-, w(y^-, t))$, the general solution
$\phi(s, t, y) = (\phi^-(s, t, y), \phi^+(s, t, y))$ with $\phi^\pm(s, t, y)
\in \RR^{d^\pm}$ verifies $\phi^+(s, t, y) = w(\phi^-(s, t, y), s)$ for each $s
\in \II$.  The non-autonomous invariant manifold is the set
\[
\mathcal G = \left\{
  \left( t, \begin{bmatrix} y^- \\ w(y^-, t) \end{bmatrix}\right)
 \in \II \times \RR^d \ : \ y^- \in \RR^{d^-} \right\} ,
\]
which obviously satisfies
$(t,y) \in \mathcal G \ \Rightarrow \
  (s, \phi(s,t,y)) \in \mathcal G \ \text{for each } s \in \II$.

These invariant manifolds are described by the following proposition, which is
a straightforward application of \cite[Th.~A.1]{pot-ras-06}
(see also \cite[Th.~6.3 p.~106, Rem.~6.6 p.~111]{klo-ras-(livre)11}).
It is useful to note that under the conditions provided in the statement of this proposition,
the existence of the general solution $\phi$ of the ODE~\eqref{nonauton} is
ensured by Picard's theorem.

\begin{proposition}
\label{prop:inv-man}
Let $\II = [t_0, \infty)$ for some $t_0 \geq 0$. Assume that the function
$\varepsilon(y, t)$ is such that $\varepsilon(0,\cdot) \equiv 0$ on $\II$,
the function $\varepsilon(\cdot, t)$
is continuously differentiable for each $t \in \II$,
and furthermore, the Jacobian matrix $\partial_1 \varepsilon (y,t)$ satisfies
\begin{equation}\label{eq:eps_lip}
  |\varepsilon |_1 \eqdef
   \sup_{(y,t) \in \RR^d \times \II} \| \partial_1\varepsilon(y,t) \|
    <  \frac{\alpha^+ - \alpha^-}{4 K}
\end{equation}
with $K = K^- + K^+ + K^- K^+ (K^- \vee K^+)$ and $\alpha^-, \alpha^+$ chosen as in Eq.~\eqref{eq:alpha+-}.
Then, for each
$\delta \in ( 2 K |\varepsilon |_1, (\alpha^+ - \alpha^-) /2)$ and
each $\gamma \in (\alpha^- + \delta, \alpha^+ - \delta)$, the set
\[
\mathcal G = \left\{ (t, y) \in \II \times \RR^d \ : \
  \sup_{s\geq t} \| \phi(s,t,y) \| \exp(\gamma(t-s)) < \infty \right\}
\]
is nonempty, and does not depend on $\gamma$. Moreover, this set is a
global invariant manifold for the ODE~\eqref{nonauton} that is defined by
a 
continuously differentiable mapping $w : \RR^{d^-} \times \II \to \RR^{d^+}$.
In addition,
if the partial derivatives $\partial_1^k \varepsilon : \RR^{d} \times \II$
exist and are continuous for $k \in \{1, \ldots, m \}$ with globally bounded
partial derivatives
\begin{equation}
\label{eq:deriv_eps_k}
|\varepsilon |_k \eqdef
   \sup_{(y,t) \in \RR^d \times \II} \| \partial_1^k \varepsilon(y,t) \| < \infty\,,
\end{equation}
under the gap condition
\begin{equation}
\label{gap}
m \alpha^- < \alpha^+, \ m \in \NN^*,
\end{equation}
the partial derivatives $\partial_1^k w : \RR^{d^-} \times \II$ exist and are
continuous with
\begin{equation}
\label{eq:deriv_w_k}
\sup_{(y^-,t)\in \RR^{d^-} \times\II} \| \partial_1^k w(y^-,t) \| < \infty
\quad \text{for all } k \in \{1, \ldots, m \}.
\end{equation}
Finally, if $\partial_2^n \partial_1^k \varepsilon$ exist and are continuous
for $0\leq n < m$ and $0\leq k+n \leq m$, then $w$ is
$m$-times continuously differentiable.
\end{proposition}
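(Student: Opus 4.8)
The plan is to obtain Proposition~\ref{prop:inv-man} as a transcription of the non-autonomous invariant manifold theorem of \cite[Th.~A.1]{pot-ras-06} (see also \cite[Th.~6.3 p.~106, Rem.~6.6 p.~111]{klo-ras-(livre)11}), once the hypotheses of that theorem are checked in the present notation. First I would verify that the general solution $\phi$ is globally well defined on $\II\times\II\times\RR^d$: by \eqref{eq:eps_lip} the vector field $y\mapsto h(y,t)$ is globally Lipschitz with a constant uniform in $t\in\II$ (the linear part contributes $\|\Lambda\|$, the perturbation at most $|\varepsilon|_1$), so Picard's theorem together with the standard a priori estimate excluding finite-time blow-up gives existence and uniqueness of a noncontinuable --- in fact global --- solution for every initial condition, and $\phi(\cdot,t,x)$ depends Lipschitz-continuously on $x$.

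Next I would record that the unperturbed equation \eqref{auton-base} possesses an exponential dichotomy on $\II$ with exponents $\alpha^-<\alpha^+$: this is exactly the pair of bounds on $\|\exp(t\Lambda^-)\|$ and $\|\exp(t\Lambda^+)\|$ stated just before the proposition, with bound constants $K^\pm\ge1$. The invariant manifold theorem then requires only that the Lipschitz constant of the perturbation be small relative to the dichotomy data; with the constant $K=K^-+K^++K^-K^+(K^-\vee K^+)$, which arises from estimating the Green's function built from the dichotomy projections, this smallness condition is precisely $|\varepsilon|_1<(\alpha^+-\alpha^-)/(4K)$, i.e.\ \eqref{eq:eps_lip}. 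Invoking the theorem produces a $C^1$ map $w:\RR^{d^-}\times\II\to\RR^{d^+}$ with $w(0,\cdot)\equiv0$ whose graph is forward invariant for \eqref{nonauton}, together with the Lyapunov--Perron characterization: for any admissible $\delta\in(2K|\varepsilon|_1,(\alpha^+-\alpha^-)/2)$ and $\gamma\in(\alpha^-+\delta,\alpha^+-\delta)$ the graph is exactly $\{(t,y):\sup_{s\ge t}\|\phi(s,t,y)\|e^{\gamma(t-s)}<\infty\}$, and this set is $\gamma$-independent in the stated range. Nonemptiness is immediate since $(t,0)$ lies on the manifold for every $t\in\II$.

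For the $C^m$ statement I would appeal to the smooth version of the same result. Under \eqref{eq:deriv_eps_k}, i.e.\ existence, continuity and global boundedness of $\partial_1^k\varepsilon$ for $k\le m$, and under the spectral gap condition \eqref{gap}, $m\alpha^-<\alpha^+$ (which is realizable precisely when the relevant ratio of spectral quantities is large enough), the fiber-contraction / Henry-type argument that underlies the theorem upgrades each $w(\cdot,t)$ to a $C^m$ map with globally bounded derivatives, giving \eqref{eq:deriv_w_k}. The joint regularity of $w$ in $(y^-,t)$ I would get by autonomizing: appending $\dot s=1$ and viewing $s$ as an extra coordinate of the linear part with zero exponent leaves the dichotomy gap unchanged, and the continuity of the mixed derivatives $\partial_2^n\partial_1^k\varepsilon$ for $0\le n<m$, $0\le k+n\le m$, is exactly what makes the enlarged perturbation smooth enough for the theorem to return a $w$ that is $m$-times continuously differentiable in all variables.

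The one genuinely delicate point is the bookkeeping in these identifications: matching our normalization of the dichotomy constants and of $K$ to the constants of \cite{pot-ras-06,klo-ras-(livre)11}, making sure that the ``forward'' (pseudo-stable) manifold there is the one attached to the block $\Lambda^-$ of nonpositive real-part eigenvalues --- which is the relevant one, since it is along this manifold that a solution of \eqref{nonauton} can converge to the origin --- and checking that the autonomization step preserves the gap condition. Once these are settled, the proposition follows directly.
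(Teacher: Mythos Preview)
Your proposal is correct and mirrors the paper's approach exactly: the paper does not give a self-contained proof but states the proposition as ``a straightforward application of \cite[Th.~A.1]{pot-ras-06} (see also \cite[Th.~6.3 p.~106, Rem.~6.6 p.~111]{klo-ras-(livre)11})'', which are precisely the references you invoke. You simply spell out the verification steps (global well-posedness via the uniform Lipschitz bound, the exponential dichotomy, the smallness condition, the gap condition for higher regularity) that the paper leaves entirely implicit.
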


Let us partition the function $h(y,t)$ as
\begin{equation}
\label{part-h}
h(y,t) = \begin{bmatrix} h^-(y,t) \\ h^+(y,t) \end{bmatrix} =
 \begin{bmatrix} \Lambda^- y^- + \varepsilon^-(y,t) \\
      \Lambda^+ y^+ + \varepsilon^+(y,t)
 \end{bmatrix} ,
\end{equation}
where $h^\pm : \RR^{d} \times\II \to \RR^{d^\pm}$, $y^\pm \in \RR^{d^\pm}$ and
$\varepsilon^\pm :  \RR^{d} \times\II \to \RR^{d^\pm}$. With these notations,
the previous proposition leads to the following lemma.

\begin{lemma}
\label{g+g-}
In the setting of Prop.~\ref{prop:inv-man}, for each $t$ in the interior of
$\II$ and each vector $y = (y^-, y^+)$ such that $y^\pm \in \RR^{d^\pm}$ and
$y^+ = w(y^-, t)$, it holds that
\begin{equation}\label{eq:g+g-}
  h^+(y,t) = \partial_{1} w(y^-, t) h^-(y,t) + \partial_2 w(y^-, t) \, .
\end{equation}
Assume that $\alpha^-$ is small enough so that Ineq.~\eqref{gap}
and Eq.~\eqref{eq:deriv_eps_k} hold true with $m=2$.
Assume in addition that
$\partial_2^n \partial_1^k \varepsilon$ exists and is continuous
for $0\leq n < 2$ and $0\leq k+n \leq 2$, and furthermore, that there exists
a bounded neighborhood $\cV \subset \RR^d$ of zero such that
\begin{equation}\label{eq:second_deriv_bound}
 \sup_{(y,t) \in \cV\times\II} \norm{\partial_2 \varepsilon(y,t)} < + \infty  .
 \end{equation}
Then, there exists a neighborhood $\cV^-\subset\RR^{d^-}$ of zero such that
\begin{gather}
   \sup_{(y^-,t) \in \cV^{-}\times \II}
      \norm{\partial_1 \partial_2 w(y^-, t)} < +\infty \,,
      \label{bound-d12w}\\
   \sup_{(y^-,t) \in \cV^{-}\times \II}
       \norm{\partial_2^2 w( y^-, t) } < +\infty \,.
       \label{bnd-d22w}
 \end{gather}
\end{lemma}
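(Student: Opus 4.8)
The plan is to first establish the invariance relation \eqref{eq:g+g-} by differentiating, with respect to the time argument of the flow, the identity that defines the invariant manifold, and then to obtain \eqref{bound-d12w}--\eqref{bnd-d22w} by solving \eqref{eq:g+g-} for $\partial_2 w$ and differentiating the resulting explicit formula once more, checking that every term produced by the chain rule is a product of quantities already known to be bounded.

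\textbf{The identity \eqref{eq:g+g-}.} Fix $t$ in the interior of $\II$ and $y^- \in \RR^{d^-}$, and set $y = (y^-, w(y^-,t))$. By the definition of the non-autonomous invariant manifold recalled just before Prop.~\ref{prop:inv-man}, the general solution of \eqref{nonauton} satisfies $\phi^+(s,t,y) = w(\phi^-(s,t,y), s)$ for all $s \in \II$. Both sides are $\mathcal C^1$ in $s$ near $s=t$ (using $w \in \mathcal C^1$ from Prop.~\ref{prop:inv-man} and the fact that $\phi(\cdot,t,y)$ solves \eqref{nonauton}), so differentiating at $s=t$ and using $\phi(t,t,y)=y$ yields, on the left, $\dot\phi^+(t,t,y) = h^+(y,t)$, and on the right, by the chain rule, $\partial_1 w(y^-,t)\,\dot\phi^-(t,t,y) + \partial_2 w(y^-,t) = \partial_1 w(y^-,t)\,h^-(y,t) + \partial_2 w(y^-,t)$. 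This is \eqref{eq:g+g-}.

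\textbf{Second-order bounds.} Under the stated hypotheses (gap condition \eqref{gap} and bound \eqref{eq:deriv_eps_k} with $m=2$, and $\partial_2^n\partial_1^k\varepsilon$ continuous for $0\le n<2$, $0\le k+n\le 2$), Prop.~\ref{prop:inv-man} gives $w\in\mathcal C^2$ with $\partial_1 w$ and $\partial_1^2 w$ globally bounded, say by $|w|_1$ and $|w|_2$. Since $w(0,\cdot)\equiv 0$, we get $\|w(y^-,t)\|\le |w|_1\|y^-\|$, so for a small enough bounded ball $\cV^-\subset\RR^{d^-}$ around $0$ the curve $y=(y^-,w(y^-,t))$ stays inside the neighborhood $\cV$ of \eqref{eq:second_deriv_bound} for every $(y^-,t)\in\cV^-\times\II$; in particular $w$ is bounded there. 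Using $h^\pm(y,t)=\Lambda^\pm y^\pm+\varepsilon^\pm(y,t)$, identity \eqref{eq:g+g-} rearranges into
\[
\partial_2 w(y^-,t) = \Lambda^+ w(y^-,t) + \varepsilon^+(y,t) - \partial_1 w(y^-,t)\big(\Lambda^- y^- + \varepsilon^-(y,t)\big),\qquad y=(y^-,w(y^-,t)).
\]
From $|\varepsilon|_1<\infty$ and $\varepsilon(0,\cdot)\equiv 0$ we have $\|\varepsilon(y,t)\|\le|\varepsilon|_1\|y\|$, so $\varepsilon^\pm$ and $\Lambda^\pm y^\pm$ are bounded on $\cV^-\times\II$; together with the global bound on $\partial_1 w$ this shows $\partial_2 w$ is bounded on $\cV^-\times\II$. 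Next, differentiate the displayed formula with respect to $y^-$: by the chain rule applied to $\varepsilon^\pm((\cdot,w(\cdot,t)),t)$, every resulting term is a product of $\Lambda^\pm$, $\partial_1 w$, $\partial_1^2 w$ (globally bounded), $\partial_1\varepsilon^\pm$ (globally bounded by $|\varepsilon|_1$), and $w$, $\varepsilon^\pm$, $\Lambda^- y^-$ (bounded on $\cV^-\times\II$); no $t$-derivative of $\varepsilon$ and no derivative of $\varepsilon$ beyond order two in its first variable occurs, so $\partial_1\partial_2 w$ is bounded on $\cV^-\times\II$, which is \eqref{bound-d12w}. Differentiating the same formula with respect to $t$ instead, the chain rule now also brings in $\partial_2\varepsilon^\pm$ (bounded on $\cV\times\II$ by \eqref{eq:second_deriv_bound}), $\partial_2 w$ (just shown bounded on $\cV^-\times\II$), and $\partial_2\partial_1 w=\partial_1\partial_2 w$ (bounded by the previous step); crucially $\partial_2^2\varepsilon$ never appears, so all terms are bounded on $\cV^-\times\II$, giving \eqref{bnd-d22w}.

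\textbf{Main obstacle.} The essential point is bookkeeping rather than analysis: one must order the two extra differentiations so that $\partial_2^2 w$ is estimated only after $\partial_1\partial_2 w$ (whose formula it involves), and one must verify that the chain rule never invokes a derivative of $\varepsilon$ not provided by the hypotheses — in particular it must avoid $\partial_2^2\varepsilon$, which is precisely why the lemma only assumes $\partial_2^n\partial_1^k\varepsilon$ for $n<2$.
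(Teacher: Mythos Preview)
Your proof is correct and follows essentially the same route as the paper's: both derive \eqref{eq:g+g-} by differentiating the invariance identity $\phi^+(s,t,y)=w(\phi^-(s,t,y),s)$ at $s=t$, then solve it for $\partial_2 w$, differentiate first in $y^-$ to bound $\partial_1\partial_2 w$, and finally in $t$ to bound $\partial_2^2 w$, using that the former appears in the formula for the latter and that $\partial_2^2\varepsilon$ is never needed. The only cosmetic difference is that the paper works with the compact notation $g(y^-,t)=(y^-,w(y^-,t))$ and $h^\pm, \partial_i h^\pm$ throughout, whereas you expand $h^\pm=\Lambda^\pm y^\pm+\varepsilon^\pm$ from the outset; the resulting estimates are the same.
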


\begin{proof}
By Prop.~\ref{prop:inv-man}, the general solution $\phi(s, t, y)$ of the
ODE~\eqref{nonauton} can be written as $\phi(s, t, y) = ( \phi^-(s, t, y),
\phi^+(s, t, y) )$ with $\phi^+(s, t, y) = w(\phi^-(s, t, y), s)$ for each
$s\in \II$. Equating the derivatives with respect to $s$ of the two members of
this equation and taking $s=t$, we get the first equation.

Writing
$g : \RR^d{^-} \times \II \to \RR^d, (y^-, t)\mapsto ( y^-, w(y^-, t))$,
Eq.~\eqref{eq:g+g-} can be rewritten as
\begin{equation}
\label{d2w}
\partial_2 w(y^-, t) = h^+(g(y^-, t),t)
        - \partial_{1} w(y^-, t) h^-(g(y^-,t),t)  .
\end{equation}
By Prop.~\ref{prop:inv-man}, the function $w$ is twice differentiable,
and we can write
\begin{equation}
\label{d22w}
\partial_2^2 w(y^-, t) = \partial_1 h^+ \partial_2 g + \partial_2 h^+
 - (\partial_1 \partial_2 w) h^- - (\partial_1 w) (\partial_1 h^- \partial_2 g
 + \partial_2 h^- ) ,
\end{equation}
where, \emph{e.g.}, $h^+$ is a shorthand notation for $h^+(g(y^-, t),t)$.  It
holds from Eq.~\eqref{part-h} and the assumptions of
Prop.~\ref{prop:inv-man} that for each $(y,t) \in \RR^d \times \II$,
\begin{equation}
\label{bnd-d1h}
\| \partial_1 h(y,t) \| \leq \|
\Lambda \| + \| \partial_1 \varepsilon(y,t) \| \leq C,
\end{equation}
where the constant $C > 0$ is independent of $(y, t)$ and can change from an
inequality to another in the remainder of the proof.  By the mean value
inequality and Prop.~\ref{prop:inv-man}, we also get that
\[
\| w(y^-, t) \| = \| w(y^-,t) - w(0, t) \| \leq
 \sup_{(u,s)} \| \partial_1 w(u, s) \| \ \| y^- \|
 \leq C \| y^- \| ,
\]
thus, $\| g(y^-, t) \| \leq C \| y^- \|$. By the mean value inequality again,
\begin{multline*}
\norm{h(g(y^-,t),t)} =
\norm{h(g(y^-,t), t) - h(0, t)} \leq \sup_{(u,t)} \norm{\partial_1 h(u, t)}
\norm{g(y^-,t)}\\
 \leq C \norm{g(y^-,t)} \leq C \| y^- \|.
\end{multline*}
By Eq.~\eqref{d2w} and Prop.~\ref{prop:inv-man}, this implies that
\begin{gather}
\norm{\partial_2 g(y^-,t) } = \norm{\partial_2 w(y^-, t)}
 = \norm{h^+ - (\partial_1 w) h^-} \leq C \norm{y^-} , \quad \text{and}
 \label{bnd-d2g} \\
\norm{\partial_1 \partial_2 w(y^-, t)} =
\norm{\partial_1 h^+ \partial_1 g - (\partial_1^2 w) h^-
 - (\partial_1 w) (\partial_1 h^- \partial_1 g)} \leq
 C (\norm{y^-} + 1) .
\label{bnd-d12w}
\end{gather}
Let $\cV^- \subset \RR^{d^-}$ be a small enough neighborhood of zero so that
$g(y^-,t) \in \cV$ for each $y^- \in \cV^-$, which is possible by the
inequality $\| g(y^-, t) \| \leq C \| y^- \|$.  By the assumption on
$\| \partial_2\varepsilon(y,t)\|$ in the statement of Lem.~\ref{g+g-}, we have
\begin{equation}
\label{bnd-d2h}
\forall y^- \in \cV^-, \quad
\norm{\partial_2 h(g(y^-,t), t)} = \norm{\partial_2 \varepsilon(g(y^-,t), t)}
 \leq C .
\end{equation}
The bound~\eqref{bound-d12w} is an immediate consequence of Eq.~\eqref{bnd-d12w}.
Getting back to Eq.~\eqref{d22w}, the bound~\eqref{bnd-d22w} follows
from the inequalities~\eqref{bnd-d1h}--\eqref{bnd-d2h}.
\end{proof}

Prop.~\ref{prop:inv-man} deals with the case where the function $\varepsilon$ is globally Lipschitz continuous.
In practical cases, such a strong assumption is not necessarily verified. In particular, for the ODEs we consider for our application,
it is not satisfied (see the function $e$ defined in Subsec.~\ref{prf:lm-lin-g} below). Nonetheless, recall that we only need the existence of
a \textit{local} non-autonomous invariant manifold, i.e. defined in the vicinity of an arbitrary solution such as the trivial zero solution
(since we suppose here $\varepsilon(0,\cdot) = 0$) whereas the aforementioned strong assumption provides a global non-autonomous invariant
manifold. Indeed, as for the avoidance of traps result we intend to show, we will only need to look at the behavior of our ODE in the neighborhood
of a trap $z_\star$. Therefore, in prevision of the proof of Th.~\ref{th:av-traps}, we localize the ODE~\eqref{nonauton} in the neighborhood of zero.
This is the purpose of the next proposition.

\begin{proposition}\label{prop:inv-man-local}
  Let $\II = [t_0, + \infty)$ for some $t_0 \geq 0$ and let $h: \RR^d \times \II \rightarrow \RR^d$ be defined as in Eq.~\eqref{nonauton}. Assume that $\varepsilon(0, \cdot) \equiv 0$ on $\II$, that the function $\varepsilon(\cdot, t)$ is continuously differentiable for every $t \in \II$ and that
  \begin{equation}\label{eq:conv_inv_man_loc}
    \lim_{(y, t) \rightarrow (0, + \infty) } \norm{\partial_1 \varepsilon(y, t)} = 0 \, .
  \end{equation}
  Then, there exist $\sigma > 0, t_1 > 0$, a function $\tilde{\varepsilon} : \RR^d \times \II_1 \to \RR^d$ where $\II_1 \eqdef [t_1, + \infty)$ and a function $\tilde{h} : \RR^d \times \II_1 \rightarrow \RR^d$ defined for every $y \in \RR^d, t \in \II_1$ by $\tilde{h}(y,t) = \Lambda y + \tilde{\varepsilon}(y,t)$ s.t. $\tilde{h}$ and $\tilde{\varepsilon}$ verify the assumptions of Prop.~\ref{prop:inv-man} and for every $(y,t) \in B(0, \sigma) \times \II_1$, we have that $\tilde{h}(y,t) = h(y,t)$ and $\tilde{\varepsilon}(y,t) = \varepsilon(y, t)$. Moreover, for any $\delta >0$, we can choose $\sigma, t_1$ respectively small and large enough s.t. the mapping $w : \RR^{d^-} \times \II_1 \rightarrow \RR^{d^{+}}$ obtained from Prop.~\ref{prop:inv-man} (applied to $\tilde{h}$ and $\tilde{\varepsilon}$) satisfies
  \begin{equation}\label{eq:omega-loc-lip}
     |w|_1  = \sup_{(y,t) \in \RR^{d^-} \times \II_1} \| \partial_1 w(y,t) \| < \delta\,.
  \end{equation}
  Furthermore, Eq.~\eqref{eq:g+g-} holds for $\tilde{h}$ and $w$ for all $(y,t) \in B(0, \sigma) \times \II_1$. If, additionally, Eq.~\eqref{eq:second_deriv_bound} holds for $\varepsilon$, then there exists $\sigma_1 \leq \sigma$ such that
   \begin{gather}
      \sup_{(y^-,t) \in B(0, \sigma_1)\times \II_1}
         \norm{\partial_1 \partial_2 w(y^-, t)} < +\infty \,,
         \label{bound-d12w-loc}\\
      \sup_{(y^-,t) \in B(0, \sigma_1)\times \II_1}
          \norm{\partial_2^2 w( y^-, t) } < +\infty \,.
          \label{bnd-d22w-loc}
    \end{gather}
\end{proposition}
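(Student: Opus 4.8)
The plan is to obtain $\tilde\varepsilon$ by spatially localizing $\varepsilon$ around $y=0$ with a smooth cutoff, while at the same time moving $t_1$ far enough to the right that the resulting nonlinearity has a uniformly small Jacobian. Then Prop.~\ref{prop:inv-man} (and, for the last assertion, Lem.~\ref{g+g-}) applies to the modified equation $\dot{\sy}(t)=\tilde h(\sy(t),t)$, and every conclusion transfers back to $h$ on the region where the cutoff is inactive. Concretely, I would fix $\chi\in\mathcal C^\infty(\RR^d,[0,1])$ with $\chi\equiv 1$ on $\bar B(0,1)$ and $\chi\equiv 0$ off $B(0,2)$, set $\chi_\sigma(y)=\chi(y/\sigma)$ for a parameter $\sigma>0$, and for $t_1\geq t_0$ define, on $\RR^d\times\II_1$ with $\II_1=[t_1,\infty)$,
\[
\tilde\varepsilon(y,t)=\chi_\sigma(y)\,\varepsilon(y,t),\qquad
\tilde h(y,t)=\Lambda y+\tilde\varepsilon(y,t).
\]
Then $\tilde\varepsilon(0,\cdot)\equiv 0$, $\tilde\varepsilon(\cdot,t)$ is $\mathcal C^1$, and $\tilde\varepsilon=\varepsilon$, $\tilde h=h$ on $B(0,\sigma)\times\II_1$, which already settles the agreement part of the statement.

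Next I would establish uniform smallness of $\partial_1\tilde\varepsilon$. Writing $\partial_1\tilde\varepsilon=\chi_\sigma\,\partial_1\varepsilon+\varepsilon\,(\nabla\chi_\sigma)^{\T}$, using $\varepsilon(0,t)=0$ together with the mean value inequality (so that $\|\varepsilon(y,t)\|\leq 2\sigma\sup_{B(0,2\sigma)\times\II_1}\|\partial_1\varepsilon\|$ for $\|y\|\leq 2\sigma$), and using $\|\nabla\chi_\sigma\|_\infty\leq C_1/\sigma$, one gets
\[
|\tilde\varepsilon|_1=\sup_{\RR^d\times\II_1}\|\partial_1\tilde\varepsilon\|
\;\leq\;(1+2C_1)\sup_{B(0,2\sigma)\times\II_1}\|\partial_1\varepsilon\|.
\]
By the hypothesis \eqref{eq:conv_inv_man_loc}, for any prescribed $\eta>0$ the right-hand side can be made $<\eta$ by taking $\sigma$ small and $t_1$ large. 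I would first fix $\alpha^-,\alpha^+$ as in \eqref{eq:alpha+-} and moreover with $2\alpha^-<\alpha^+$ (legitimate, since $\alpha^-$ may be taken arbitrarily small), and then pick $\eta$ so that $|\tilde\varepsilon|_1<(\alpha^+-\alpha^-)/(4K)$. Prop.~\ref{prop:inv-man} then applies to $(\tilde h,\tilde\varepsilon)$ and produces the $\mathcal C^1$ global invariant manifold $w:\RR^{d^-}\times\II_1\to\RR^{d^+}$ with $\sup\|\partial_1 w\|<\infty$; Lem.~\ref{g+g-} gives identity \eqref{eq:g+g-} for $\tilde h$ and $w$ at every $(y,t)$ with $t$ interior to $\II_1$ and $y^+=w(y^-,t)$, and by continuity at $t=t_1$ as well, which is the corresponding claim on $B(0,\sigma)\times\II_1$.

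The one nonroutine point is making $|w|_1<\delta$: Prop.~\ref{prop:inv-man} only guarantees $\sup\|\partial_1 w\|<\infty$, so here I need the quantitative form of the estimate behind it. The manifold $w$ is the graph of the Lyapunov--Perron fixed point attached to the exponential dichotomy of $\Lambda$; differentiating the integral equation $w$ solves and using the dichotomy constants $K^\pm,\alpha^\pm$ (as in \cite{dal-krei-(livre)74,klo-ras-(livre)11} and the estimates inside the proof of \cite[Th.~A.1]{pot-ras-06}) yields a bound $|w|_1\leq\Psi(|\tilde\varepsilon|_1)$ with $\Psi$ nondecreasing and $\Psi(s)\to 0$ as $s\downarrow 0$ (in fact $\Psi(s)=O(s)$). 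Since $|\tilde\varepsilon|_1$ can be driven to $0$ by shrinking $\sigma$ and enlarging $t_1$, this forces $|w|_1<\delta$ for the prescribed $\delta$. This quantitative dependence is the main obstacle: it is not contained in the statement of Prop.~\ref{prop:inv-man} and must be read off from its proof.

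For the final assertion I would additionally assume the regularity needed by Lem.~\ref{g+g-} (which is in force whenever the proposition is applied, via items \ref{e_reg} and \ref{dt_eps} of Th.~\ref{th:av-traps}): $\partial_2^n\partial_1^k\varepsilon$ exist and are continuous for $0\leq n<2$, $0\leq k+n\leq 2$, with $\partial_1^2\varepsilon$ bounded near $(0,+\infty)$, and \eqref{eq:second_deriv_bound} for $\varepsilon$. Since $\chi_\sigma$ is $t$-independent, $\partial_2^n\partial_1^k\tilde\varepsilon$ exist and are continuous in the same range; expanding $\partial_1^2(\chi_\sigma\varepsilon)$ by Leibniz and bounding each factor on $\mathrm{supp}\,\chi_\sigma\times\II_1\subseteq B(0,2\sigma)\times\II_1$ (the derivatives of $\chi_\sigma$ being bounded for the now-fixed $\sigma$, while $\varepsilon$, $\partial_1\varepsilon$, $\partial_1^2\varepsilon$ are bounded there, again using $\varepsilon(0,\cdot)=0$, the mean value inequality and \eqref{eq:conv_inv_man_loc}) shows $|\tilde\varepsilon|_2<\infty$; and $\partial_2\tilde\varepsilon=\chi_\sigma\,\partial_2\varepsilon$ is bounded on $\tilde\cV\times\II_1$ for any bounded neighborhood $\tilde\cV\subseteq\cV$ of $0$. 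Since the gap $2\alpha^-<\alpha^+$ has already been secured, Lem.~\ref{g+g-} applied to $\tilde h$ furnishes a neighborhood $\cV^-\subset\RR^{d^-}$ of $0$ on which $\partial_1\partial_2 w$ and $\partial_2^2 w$ are bounded, and choosing $\sigma_1\leq\sigma$ with $B(0,\sigma_1)\subseteq\cV^-$ gives \eqref{bound-d12w-loc}--\eqref{bnd-d22w-loc}. Apart from the quantitative estimate on $|w|_1$, the remaining work is routine bookkeeping to check that the cut-off nonlinearity inherits uniform, not merely local, bounds.
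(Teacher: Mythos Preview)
Your proposal is correct and follows essentially the same approach as the paper: a smooth spatial cutoff $\tilde\varepsilon(y,t)=\chi_\sigma(y)\,\varepsilon(y,t)$, the Leibniz/mean-value computation yielding $|\tilde\varepsilon|_1\leq(1+2C_1)\sup_{B(0,2\sigma)\times\II_1}\|\partial_1\varepsilon\|$, and then an appeal to Lem.~\ref{g+g-} for the higher-order bounds via $\|\partial_2\tilde\varepsilon\|\leq\|\partial_2\varepsilon\|$. The only point you flag as nonroutine---the quantitative bound $|w|_1\leq\Psi(|\tilde\varepsilon|_1)$ with $\Psi(s)\to 0$---is exactly what the paper invokes as well, citing the explicit estimate $|w|_1\leq \frac{2K^2}{\alpha^+-\alpha^--4K|\tilde\varepsilon|_1}\,|\tilde\varepsilon|_1$ from \cite[Th.~6.3]{klo-ras-(livre)11}.
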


\begin{proof}
  The idea of the proof is to \emph{localize} the function $h(y,t)$ to a
  neighborhood of zero in the variable $y$ for the purpose of applying
  Prop.~\ref{prop:inv-man}. This cut-off technique is known in the non-autonomous ODE literature, see, \emph{e.g.}, \cite[Th.~6.10]{klo-ras-(livre)11}.
  Let $\psi : \RR^d \to [0,1]$ be a smooth function such that $\psi(y) = 1$
  if $\| y \| \leq 1$, and $\psi(y) = 0$ if $\| y \| \geq 2$. Let
  $C = \max_y \| \nabla\psi(y) \|$ where $\nabla\psi$ is the Jacobian
  matrix of~$\psi$. Thanks to the convergence~\eqref{eq:conv_inv_man_loc}, we can choose
  $t_1 > 0$ large enough and~$\sigma > 0$ small enough so that
  \[
  \sup_{(t,y) \in [t_1, \infty) \times B(0, 2\sigma)}
  \| \partial_1 \varepsilon(y,t) \| < \frac{\alpha^+ - \alpha^-}{4 K( 1 + 2 C)} ,
  \]
  and we set $\II_1 = [t_1, \infty)$. Writing
  $\tilde{\varepsilon}(y,t) = \psi(y/\sigma) \varepsilon(y,t)$, it holds that for each
  $(t,y) \in \II_1 \times \RR^d$,
  \begin{align*}
  \| \partial_1 \tilde{\varepsilon}(y,t) \| &\leq
    \sigma^{-1} C \1_{\| y \| \leq 2\sigma} \| \varepsilon(y,t) \|
        + \1_{\| y \| \leq 2\sigma} \| \partial_1 \varepsilon(y,t) \| \\
   &\leq
    \left( \max_{\| y \| \leq 2\sigma} \| \partial_1 \varepsilon(y,t) \| \right)
     \left( \sigma^{-1} C \| y \| + 1 \right)\1_{\| y \| \leq 2\sigma} \\
   &\leq \frac{\alpha^+ - \alpha^-}{4 K},
  \end{align*}
  where we used the mean value inequality along with $\varepsilon(0,t) = 0$ to
  obtain the second inequality. Thus, the function $\tilde{h}(y,t) = \Lambda y +
  \tilde{\varepsilon}(y,t)$ satisfies all the assumptions of Prop.~\ref{prop:inv-man}.
  In addition, the function~$\tilde\varepsilon$ coincides with the function~$\varepsilon$ on $B(0, \sigma_1) \times \II_1$, and
  so it is for the functions $\tilde h$ and $h$.
  Finally, it follows from \cite[Th. 6.3]{klo-ras-(livre)11} that
  $$
  |w|_1 \leq \frac{2K^2}{\alpha_+ - \alpha_- -4K |\tilde{\varepsilon}|_1}|\tilde{\varepsilon}|_1\,
  $$
  (note that~$L$ in \cite[Th. 6.3]{klo-ras-(livre)11} corresponds to $|\tilde{\varepsilon}|_1$ with our notations). Using Eq.~\eqref{eq:conv_inv_man_loc},
  we can make $|\tilde{\varepsilon}|_1$ as small as needed by choosing~$\sigma, t_1$ respectively small and large enough, which gives us Eq.~\eqref{eq:omega-loc-lip}. The proof of the last two equations follows from the application of Lemma~\ref{g+g-} to $\tilde{h}$ and~$w$. The result is immediate after noticing that for $(y,t) \in \RR^{d} \times \II_1$, we have $\norm{\partial_2 \tilde{\varepsilon}(y,t)} \leq \norm{\partial_2 \varepsilon(y,t)} $.
\end{proof}

\subsection{Proof of Th.~\ref{th:av-traps}}
\label{proof:th-av-traps}

We shall rely on the following result of Brandi\`ere and Duflo. Recall that $(\Omega, \mcF, \PP)$ is a probability space equipped with a filtration $(\mcF_n)_{n\in\NN}$.

\begin{proposition}{(\cite[Prop.~4]{bra-duf-96})}
\label{prop:duflo}
Given a sequence $(\gamma_n)$ of deterministic nonnegative stepsizes
such that $\sum_k \gamma_k = +\infty$ and $\sum_k \gamma_k^2 < +\infty$,
consider the $\RR^d$--valued stochastic process $(z_n)_{n\in\NN}$ given by
\[
z_{n+1} = ( I +\gamma_{n+1} H_n ) z_n + \gamma_{n+1} \eta_{n+1} +
\gamma_{n+1} \rho_{n+1}.
\]
Assume that $z_0$ is $\mcF_0$--measurable and that the sequences $(\eta_n)$,
$(\rho_n)$ together with the sequence of random
matrices $(H_n)$ are $(\mcF_n)$--adapted. Moreover, on a given event $A \in \mcF$, assume the following facts:
\begin{enumerate}[{\it i)}]
\item $\sum_n \| \rho_n \|^2 < \infty$.
\item $\limsup \EE [\| \eta_{n+1} \|^{2+a} \, | \, \mcF_n ] < \infty$ for some
 $a > 0$, and $\EE [\eta_{n+1} \, | \, \mcF_n ] = 0$.
\item $\liminf \EE [\| \eta_{n+1} \|^{2} \, | \, \mcF_n ] > 0$.
\end{enumerate}
Let $H \in \RR^{d\times d}$ be a deterministic matrix such that the real parts
of its eigenvalues are all positive. Then,
\[
\PP\left( A \cap [ z_n \to 0 ] \cap [H_n \to H ] \right) = 0.
\]
\end{proposition}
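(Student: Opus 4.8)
The plan is to follow the Brandi\`ere--Duflo strategy in the non-autonomous setting: track the ``distance'' of $z_n$ to the local non-autonomous invariant manifold through $z_\star$ provided by Prop.~\ref{prop:inv-man-local}, and use the noise excitation in the unstable block to show, via Prop.~\ref{prop:duflo}, that this distance cannot vanish. First I would normalize: after the translation making $z_\star = 0$ and the linear change of variables $y = Q^{-1}z$ bringing $D$ into block form $\Lambda = \mathrm{diag}(\Lambda^-,\Lambda^+)$, the algorithm becomes
\[
y_{n+1} = y_n + \gamma_{n+1}\big(\Lambda y_n + \tilde e(y_n,\tau_n)\big) + \gamma_{n+1}\tilde\eta_{n+1} + \gamma_{n+1}\tilde\rho_{n+1},
\]
with $\tilde e(y,t) = Q^{-1}e(Qy,t)$, $\tilde\eta_n = Q^{-1}\eta_n$, $\tilde\rho_n = Q^{-1}\rho_n$, and $\tilde e(0,\cdot)\equiv 0$ by \ref{e=0}. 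Assumptions \ref{e_reg}, \ref{hyp:d1_eps}, \ref{dt_eps} translate verbatim into the hypotheses of Prop.~\ref{prop:inv-man-local} and Lem.~\ref{g+g-} with $m=2$ (take $\alpha^-$ small in Eq.~\eqref{eq:alpha+-} so that $2\alpha^-<\alpha^+$). For a small $\delta>0$ to be fixed later, Prop.~\ref{prop:inv-man-local} then yields $\sigma,t_1>0$ and a $\mathcal C^2$ map $w:\RR^{d^-}\times[t_1,\infty)\to\RR^{d^+}$ with $w(0,\cdot)\equiv 0$, $|w|_1<\delta$, bounded $\partial_1^2 w$, $\partial_1\partial_2 w$, $\partial_2^2 w$ near $0$, and the manifold identity
\[
\Lambda^+ w(y^-,t) + \tilde e^+\big(y^-,w(y^-,t),t\big) = \partial_1 w(y^-,t)\Big(\Lambda^- y^- + \tilde e^-\big(y^-,w(y^-,t),t\big)\Big) + \partial_2 w(y^-,t)
\]
on a neighborhood of $(0,\infty)$. (If $d^- = 0$ the manifold reduces to $\{0\}$, $w\equiv 0$, and what follows only simplifies.)

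Next I would introduce the off-manifold coordinate $\zeta_n \eqdef y_n^+ - w(y_n^-,\tau_n)$. On $[z_n\to z_\star]$ one has $y_n\to 0$, hence $z_n\in\cW$ and $\|y_n\|\le\sigma$ for all $n$ past some random index; after the usual stabilization of the recursion outside a small ball (exactly as in the construction of $\tilde\zeta_n$ in the proof of Th.~\ref{th:clt}), I may assume the update below holds for every $n$. A second-order Taylor expansion of $w(y_{n+1}^-,\tau_{n+1})$ about $(y_n^-,\tau_n)$, using $\tau_{n+1}-\tau_n = \gamma_{n+1}$ and $y_{n+1}^- - y_n^- = \gamma_{n+1}\big(\Lambda^- y_n^- + \tilde e^-(y_n,\tau_n) + \tilde\eta^-_{n+1} + \tilde\rho^-_{n+1}\big)$, substituting $y_n^+ = w(y_n^-,\tau_n) + \zeta_n$ into the drift, expanding $\tilde e^\pm$ to first order (mean-value form) in the $y^+$ variable, whose Jacobian tends to $0$ by \ref{hyp:d1_eps}, and invoking the manifold identity to cancel the first-order drift, yields
\[
\zeta_{n+1} = \big(I + \gamma_{n+1}(\Lambda^+ + \Delta_n)\big)\zeta_n + \gamma_{n+1}\hat\eta_{n+1} + \gamma_{n+1}\hat\rho_{n+1},
\]
where $\Delta_n\to 0$ on $[z_n\to z_\star]$, where $\hat\eta_{n+1} \eqdef \tilde\eta^+_{n+1} - \partial_1 w(y_n^-,\tau_n)\,\tilde\eta^-_{n+1}$ is an $\mcF_n$-martingale increment, and where $\hat\rho_{n+1} \eqdef \tilde\rho^+_{n+1} - \partial_1 w(y_n^-,\tau_n)\,\tilde\rho^-_{n+1} - \gamma_{n+1}^{-1}R_n$ with $\|R_n\| \le C\gamma_{n+1}^2\big(1 + \|\tilde\eta^-_{n+1}\|^2 + \|\tilde\rho^-_{n+1}\|^2\big)$ by the boundedness of the second derivatives of $w$.

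Finally I would apply Prop.~\ref{prop:duflo} on $A = [z_n\to z_\star]$ with $H = \Lambda^+$ (all its eigenvalues have positive real part since $d^+>0$) and $H_n = \Lambda^+ + \Delta_n \to H$; note $\zeta_n\to 0$ on $A$ since $\|w(y_n^-,\tau_n)\|\le\delta\|y_n^-\|\to 0$. Condition~(i), $\sum_n\|\hat\rho_n\|^2<\infty$ a.s.\ on $A$, follows from \ref{sum_rho_carre}, from $\sum_n\gamma_n^2<\infty$ together with the fourth-moment bound in \ref{traps-noise-mom} (which makes $\sum_n\gamma_{n+1}^2\|\tilde\eta^-_{n+1}\|^4\1_{z_n\in\cW}<\infty$ a.s.\ by a conditional Borel--Cantelli argument), and from the bounds on $\partial_1^2 w$, $\partial_1\partial_2 w$, $\partial_2^2 w$. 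Condition~(ii), $\limsup\EE[\|\hat\eta_{n+1}\|^{4}\mid\mcF_n]<\infty$ with $\EE[\hat\eta_{n+1}\mid\mcF_n]=0$ on $A$, follows from \ref{traps-noise-mom} and $\|\hat\eta_{n+1}\|\le(1+\delta)\|Q^{-1}\|\,\|\eta_{n+1}\|$. The delicate point, and the main obstacle, is condition~(iii), $\liminf\EE[\|\hat\eta_{n+1}\|^{2}\mid\mcF_n]>0$ on $A$: the excitation assumption \ref{traps-noise} bears on $\tilde\eta^+_{n+1}$, not on $\hat\eta_{n+1} = \tilde\eta^+_{n+1} - \partial_1 w\,\tilde\eta^-_{n+1}$, and from $\|\tilde\eta^+_{n+1}\|^2 \le 2\|\hat\eta_{n+1}\|^2 + 2\delta^2\|\tilde\eta^-_{n+1}\|^2$ one only gets $\EE[\|\hat\eta_{n+1}\|^2\mid\mcF_n] \ge \tfrac12\EE[\|\tilde\eta^+_{n+1}\|^2\mid\mcF_n] - \delta^2\EE[\|\tilde\eta^-_{n+1}\|^2\mid\mcF_n]$, whose last term is controlled only by the a.s.\ finite but not uniformly bounded quantity $\sqrt{\EE[\|\eta_{n+1}\|^4\mid\mcF_n]}$. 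I would resolve this by stratifying: set $A_K \eqdef \{\limsup_n\EE[\|\eta_{n+1}\|^4\mid\mcF_n]\1_{z_n\in\cW}\le K\}$, so $A_K\uparrow\Omega$ a.s.; for each $K$, rerun the construction with $\delta = \delta_K$ small enough that $\delta_K^2\|Q^{-1}\|^2\sqrt{2K} < c^2/8$, which forces $\EE[\|\hat\eta_{n+1}\|^2\mid\mcF_n]\ge c^2/8$ eventually on $A_K\cap[z_n\to z_\star]$; Prop.~\ref{prop:duflo} then gives $\PP(A_K\cap[z_n\to z_\star]) = 0$, and letting $K\to\infty$ yields $\PP([z_n\to z_\star]) = 0$.
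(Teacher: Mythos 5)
Your proposal does not prove the stated proposition. What you have written is, essentially, the paper's own proof of Th.~\ref{th:av-traps}: the change of basis $y=Q^{-1}(z-z_\star)$, the local non-autonomous invariant manifold $w$ from Prop.~\ref{prop:inv-man-local}, the off-manifold coordinate $u_n^+=y_n^+-w(y_n^-,\tau_n)$, and then an appeal to Prop.~\ref{prop:duflo} to conclude. But the statement you were asked to prove \emph{is} Prop.~\ref{prop:duflo} --- a self-contained non-convergence result for the linear recursion $z_{n+1}=(I+\gamma_{n+1}H_n)z_n+\gamma_{n+1}\eta_{n+1}+\gamma_{n+1}\rho_{n+1}$ with $H_n\to H$ having all eigenvalues of positive real part. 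Invoking Prop.~\ref{prop:duflo} as the final step of an argument whose target is Prop.~\ref{prop:duflo} is circular, and nothing in your text addresses the actual content of that proposition. (In the paper this result carries no proof at all: it is quoted from Brandi\`ere and Duflo \cite{bra-duf-96}, which is why the invariant-manifold machinery you describe is part of the proof of Th.~\ref{th:av-traps}, not of Prop.~\ref{prop:duflo}.)

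A genuine proof would have to reproduce the Pemantle/Brandi\`ere--Duflo core argument: pick a norm (or quadratic form) adapted to $H$ in which $\|(I+\gamma H_n)z\|\geq(1+\gamma\alpha)\|z\|$ for some $\alpha>0$ once $H_n$ is close to $H$ and $\gamma$ is small; use $\EE[\eta_{n+1}\mid\mcF_n]=0$, the uniform $(2+a)$-moment bound and the excitation $\liminf\EE[\|\eta_{n+1}\|^2\mid\mcF_n]>0$ to show that each step injects variance of order $\gamma_{n+1}^2$ into $\|z_n\|$, which together with the expansion, $\sum_n\gamma_n=\infty$ and $\sum_n\|\rho_n\|^2<\infty$ is incompatible with $z_n\to 0$ on $A\cap[H_n\to H]$. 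None of this appears in your proposal. Two secondary remarks: you verify a fourth-moment condition on $\hat\eta_n$ where the proposition only asks for $2+a$ moments, and your stratification over the events $A_K$ to secure $\liminf\EE[\|\hat\eta_{n+1}\|^2\mid\mcF_n]>0$ belongs to the \emph{application} of the proposition inside the proof of Th.~\ref{th:av-traps} (where the paper instead shrinks $|w|_1$ via Prop.~\ref{prop:inv-man-local} to obtain Eq.~\eqref{eq:noise_repulsiv_loc}); it is not part of a proof of Prop.~\ref{prop:duflo} itself.
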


We now enter the proof of Th.~\ref{th:av-traps}. Recall the
development~\eqref{b-dl-z*} of $b(z,t)$ near~$z_\star$ and the spectral
factorization~\eqref{jordan} of the matrix $D$. To begin with, it will be
convenient to make the variable change $y = Q^{-1} (z-z_\star)$, and set
\[
h(y,t) = Q^{-1} b(Q y + z_\star, t) = \Lambda y + \tilde e(y,t),
\]
with $\tilde e(y,t) = Q^{-1} e(Q y + z_\star, t)$, in such a way that our stochastic
algorithm is rewritten as
$$
y_{n+1} = y_n + \gamma_{n+1} h(y_n, \tau_{n})
+ \gamma_{n+1} \tilde\eta_{n+1} + \gamma_{n+1} \tilde\rho_{n+1}\,
$$
where $\tilde\eta_n$ is as in the statement
of the theorem and $\tilde\rho_n = Q^{-1} \rho_n$. Observe that the assumptions on the function $e$ in the statement of the theorem remain true
for $\tilde e$ with $z_\star$ replaced by zero.

If the matrix $\Lambda$ has only eigenvalues with (strictly) positive real
parts, \emph{i.e.}, $d^-=0$, then we can apply Prop.~\ref{prop:duflo} to the
sequence~$(z_n)$. Henceforth, we deal with the more complicated case where~$d^-
> 0$.

Apply Prop.~\ref{prop:inv-man-local} to $h$ to obtain $\tilde{h}$ and $\sigma,t_1$ respectively small and large enough and $w : \RR^{d^-}\times \II_1 \to \RR^{d^+}$ where $\II_1 := [t_1, + \infty)$. By Assumption~\ref{dt_eps} of Th.~\ref{th:av-traps} and Prop.~\ref{prop:inv-man-local} we can choose $\sigma_1 \leq \sigma$ such that Eq.~\eqref{bound-d12w-loc} and Eq.~\eqref{bnd-d22w-loc} hold.
Now, given $p \in \NN$, let us define the event
\[
 E_p = \left[ \forall n \geq p, \ \| y_n \| < \sigma_1, \tau_n \in \II_1 \right].
\]
On $E_p$, it holds that $h(y_n, \tau_{n})= \tilde{h}(y_n, \tau_n)$ and
\begin{align}
\forall n \geq p, \quad
y_{n+1} &= y_n + \gamma_{n+1} h(y_n, \tau_{n}) + \gamma_{n+1} \tilde\eta_{n+1}
              + \gamma_{n+1} \tilde\rho_{n+1} \nonumber \\
 &= \begin{bmatrix} y^-_{n} \\ y^+_n \end{bmatrix}
 + \gamma_{n+1}
      \begin{bmatrix} h^-(y_n, \tau_n) \\ h^+(y_n, \tau_n)\end{bmatrix}
 + \gamma_{n+1}
   \begin{bmatrix} \tilde\eta^-_{n+1} \\ \tilde\eta^+_{n+1} \end{bmatrix}
 + \gamma_{n+1}
   \begin{bmatrix} \tilde\rho^-_{n+1} \\ \tilde\rho^+_{n+1} \end{bmatrix}\,
\label{alg-y}
\end{align}
where $h$ is partitioned as in~\eqref{part-h}, and where
$\tilde\eta^\pm_n, \tilde\rho^\pm_n \in \RR^{d^\pm}$. Note that, by Prop.~\ref{prop:inv-man-local} and Assumptions~\ref{traps-noise-mom} and \ref{traps-noise} on the sequence $(\eta_n)$, we can choose $\sigma,t_1$ respectively small and large enough such that
\begin{equation}\label{eq:noise_repulsiv_loc}
  \liminf \EE[\norm{\tilde{\eta}^{+}_{n+1} }^2 |\mcF_n] \1_{E_p}(y_n)\\
  - 2 \limsup \EE[\norm{\partial_1 w(y_n^-, \tau_n)\tilde{\eta}_{n+1}^{-}}^2 | \mcF_n] \1_{E_p}(y_n) > \frac{c^2}{2} \, .
\end{equation}
This inequality will be important in the end of our proof.
Let $t$ be in the interior of $\II_1$, and let $y = (y^-, y^+)$ be in a neighborhood of $0$.
Make the variable change $(y^-, y^+) \mapsto (u^-, u^+)$ with
\begin{align*}
u^+ &= y^+ - w(y^-, t), \\
u^- &= y^- ,
\end{align*}
where $w$ is the function defined in the statement of
Prop.~\ref{prop:inv-man-local}, and let
\begin{align*}
W(u^-, u^+, t) &= h^+(y,t) - \partial_1 w(y^-, t) h^-(y,t)
    - \partial_2 w(y^-, t) \\
 &= h^+((u^-, u^+ + w(u^-, t)), t)\\
  &- \partial_1  w(u^-, t) h^-((u^-, u^+ + w(u^-, t)), t)
  - \partial_2 w(u^-, t) .
\end{align*}
By Prop.~\ref{prop:inv-man-local} and Lem.~\ref{g+g-}, it holds that $W(u^-, 0, t) = 0$. Moreover,
$W(u^-, \cdot, t) \in \mathcal C^1$ by the assumptions on $h$. Therefore,
writing $y(r) = ( u^- , r u^+ + w(u^-,t) )$ for $r\in[0,1]$, and using the
decomposition~\eqref{part-h}, we get that
\begin{align*}
W(u^-, u^+, t) &= \int_0^1 \partial_{2} W(u^-, r u^+, t) u^+ \, dr \\
 &= \Lambda^+ u^+ \\ &\phantom{=} + \int_0^1
\left( \partial_1 \varepsilon^+(y(r), t)
   \begin{bmatrix} 0 \\ I_{d^+} \end{bmatrix}
 - \partial_1 w(u^-,t) \partial_1 \varepsilon^-(y(r), t)
   \begin{bmatrix} 0 \\ I_{d^+} \end{bmatrix} \right)
     u^+ dr .
\end{align*}
We can also write $y(r) = ( y^- , r y^+ + (1-r) w(y^-,t))$.
Recalling that $w(0,t) = 0$ and that $\| \partial_1 w(y^-, t) \|$ is
bounded on $\RR^{d^-} \times \II$, we get by the mean value inequality that
$\norm{w(y^-,t)} \leq C \norm{y^-}$ where $C > 0$ is a constant. Thus,
$\norm{y(r)} \leq (1+C) \norm{y}$. Moreover, $\varepsilon(y,t) =
Q^{-1} e( Qy, t)$ for $\| y \| < \sigma$. Thus, we get by~\eqref{d1_eps} that
$\norm{\partial_1\varepsilon(y(r), t)} \to 0$ as $(y,t) \to (0,\infty)$
uniformly in $r\in [0,1]$. Using again the boundedness of
$\| \partial_1 w(\cdot, \cdot) \|$, we eventually obtain that
\[
W(u^-, u^+, t) = \left(\Lambda^+ + \Delta(y,t) \right) u^+,
\quad \text{with} \quad
 \lim_{(y,t)\to(0,\infty)} \Delta(y,t) = 0 .
\]
On the event $E_p$, assume that $n \geq p$, and write
\[
u^+_n = y^+_n - w(y^-_n, \tau_n), \quad u^-_n = y^-_n,
\]
(see Eq.~\eqref{alg-y}).
Choosing $\alpha_- > 0$ small enough so that the gap condition~\eqref{gap}
is satisfied with $m = 2$, we have by Taylor's expansion
\begin{align*}
w(y^-_{n+1}, \tau_{n+1}) - w(y^-_n, \tau_n) &=
w(y^-_{n+1}, \tau_{n+1}) - w(y^-_n, \tau_{n+1})
                  + w(y^-_n, \tau_{n+1}) - w(y^-_n, \tau_n)  \\
 &= \partial_1 w(y^-_n, \tau_{n+1}) ( y^-_{n+1} - y^-_n )
  + \gamma_{n+1} \partial_2 w(y^-_n, \tau_n)
+ \epsilon_{n+1}  +  \epsilon_{n+1}^{\gamma} \, ,
\end{align*}
\begin{align*}
  \text{with}\quad
\norm{\epsilon_{n+1}}  &\leq \underset{y^- \in [y^-_n, y^-_{n+1}]}{\sup} \norm{\partial_1^2 w(y^{-},\tau_{n+1})} \norm{ y^-_{n+1} - y^-_n}^2 \,,\\
\text{and}\quad \norm{\epsilon^{\gamma}_{n+1}} &\leq  \underset{\tau \in [\tau_n, \tau_{n+1}]}{\sup} \norm{\partial_2^2 w(y^{-}_n, \tau)}\gamma_{n+1}^2\,.
\end{align*}
Using this equation, we obtain
\begin{align*}
u^+_{n+1} - u^+_n &= \gamma_{n+1} W(u^-_n, u^+_n, \tau_n)
  + \gamma_{n+1} \left(\tilde\eta^+_{n+1}
      - \partial_1 w(y^-_n, \tau_{n+1}) \tilde\eta^-_{n+1}\right) \\
   &\phantom{=}
  + \gamma_{n+1} \left(\tilde\rho^+_{n+1}
      - \partial_1 w(y^-_n, \tau_{n+1}) \tilde\rho^-_{n+1} \right)
    - \epsilon_{n+1} -  \epsilon^{\gamma}_{n+1}\\
    &\phantom{=}
    + \gamma_{n+1} \left( \partial_1 w(y^-_n, \tau_{n})
    - \partial_1 w(y^-_n, \tau_{n+1}) \right) h^-(y_n,\tau_n) \, ,
\end{align*}
which leads to
\begin{equation}
  \label{eq:un+}
u^+_{n+1} = u^+_n
   + \gamma_{n+1} \left( \Lambda^+ + \Delta(y_n, \tau_n) \right) u^+_n
    + \gamma_{n+1} \bar\eta_{n+1} + \gamma_{n+1} \bar\rho_{n+1}
      ,
\end{equation}
with $\bar\eta_{n+1} = \tilde\eta^+_{n+1} - \partial_1 w(y^-_n, \tau_{n})
\tilde\eta^-_{n+1}$ and
\begin{multline}
  \bar\rho_{n+1} = \tilde\rho^+_{n+1}
   - \partial_1 w(y^-_n, \tau_{n}) \tilde\rho^-_{n+1} - \1_{\gamma_{n+1} > 0}\frac{\epsilon_{n+1}  + \epsilon^{\gamma}_{n+1}}{\gamma_{n+1}}\\
   + \left( \partial_1 w(y^-_n, \tau_{n})
   - \partial_1 w(y^-_n, \tau_{n+1}) \right) h^-(y_n,\tau_n)\,.
   \label{eq:rho_bar}
\end{multline}

To finish the proof, it remains to check that the noise sequence satisfies the assumptions of Prop.~\ref{prop:duflo} on the event~$A_p = E_p \cap [y_n \rightarrow 0 ]$. In the remainder, $C'$ will indicate some positive constant which can change from an inequality to another one.

First, we verify that $\sum_n \|\bar\rho_n \|^2 < \infty$ on $A_p$ by controlling each one of the terms of $\bar\rho_n$. Combining the boundedness of $\partial_1 w(\cdot,\cdot)$ with the summability assumption $\sum_{n} \|\tilde\rho_{n+1}\|^2 \1_{z_n \in \mathcal W} < +\infty$ a.s., we immediately obtain on $A_p$ that $\sum_n \|\tilde\rho^+_{n+1} - \partial_1 w(y^-_n, \tau_{n}) \tilde\rho^-_{n+1}\|^2 < +\infty$ given our choice of $\sigma$. Moreover, it holds that $\left(\|\epsilon^{\gamma}_{n+1}\|/\gamma_{n+1} \right)^2 \leq C' \gamma^2_{n+1}$ by invoking Prop.~\ref{prop:inv-man-local}. In addition, using the boundedness of $\partial_1^2 w(\cdot,\cdot)$, we can write
 \begin{multline*}
   \1_{\gamma_{n+1} > 0}\left\|\frac{\epsilon_{n+1}}{\gamma_{n+1}}\right\|^2
   \leq \1_{\gamma_{n+1} > 0} \frac{C'}{\gamma_{n+1}^2} \norm{y_{n+1} - y_n}^4\\
   \leq C'\gamma_{n+1}^2 ( \norm{h(y_n, \tau_n)}^4 + \norm{\tilde\eta_{n+1}}^4 + \norm{\tilde\rho_{n+1}}^4)\, .
 \end{multline*}
 A coupling argument (see \cite[p.~401]{bra-duf-96}) shows that we can simplify the condition\\ $\limsup \EE [\| \eta_{n+1} \|^{4} \, | \, \mcF_n ]
     \1_{z_n \in \mathcal W}< \infty$ to $ \EE [\| \eta_{n+1} \|^{4} \, | \, \mcF_n ]
         \1_{z_n \in \mathcal W}< C'$. The latter condition implies that
$\EE[ \1_{A_p}\sum_n \gamma_{n+1}^2 \norm{\eta_{n+1}}^4] \leq \sum_n C' \gamma_{n+1}^2$, and therefore $\sum_n \gamma_{n+1}^2 \norm{\eta_{n+1}}^4 \1_{A_p} < +\infty$ a.s. As a consequence, noticing also the boundedness of~$(h(y_n, \tau_n))$ and $(\tilde{\rho}_n)$ on $A_p$, we deduce that $\sum_n \1_{\gamma_{n+1} > 0}\left\|\frac{\epsilon_{n+1}}{\gamma_{n+1}}\right\|^2 < +\infty$ on $A_p$. We now briefly control the last term of $\bar\rho_n$.
By the mean value inequality, we obtain that
\begin{multline*}
\left\| \left(\partial_1 w(y^-_n, \tau_{n})
- \partial_1 w(y^-_n, \tau_{n+1}) \right) h^-(y_n,\tau_n)\right\|\\
\leq \gamma_{n+1}\underset{(y^-,t)}{\sup} \norm{\partial_2\partial_1 w(y^{-},t)} \|h^-(y_n,\tau_n)\|
\leq C' \gamma_{n+1} \,,
\end{multline*}
where the last inequality stems from Prop.~\ref{prop:inv-man-local}-Eq.~\eqref{bound-d12w-loc} together with the boundedness of the sequence $(h(y_n, \tau_n))$.
In view of Eq.~\eqref{eq:rho_bar} and the above estimates, we deduce that $\sum_n \|\bar\rho_{n+1}\|^2 \1_{A_p} < +\infty$ a.s. on $A_p$.

 We verify the remaining conditions on the noise sequence $(\bar \eta_n)$. We can easily remark that $\EE[\bar\eta_{n+1} | \mcF_{n}] = 0$ and $\norm{ \bar \eta_{n+1}} \leq C' \norm{\eta_{n+1}}$ on $A_p$. Hence, $\limsup \EE [\| \bar \eta_{n+1} \|^{4} \, | \, \mcF_n ]  \1_{z_n \in \mathcal W}< \infty$.
 The last condition, meaning that the noise is exciting enough, stems from noting that
\begin{align*}
    2  \liminf \EE[\norm{ \bar \eta_{n+1}}^2 |\mcF_n] \1_{A_p}
    &\geq \liminf \EE[\norm{\tilde{\eta}^{+}_{n+1} }^2 |\mcF_n] \1_{A_p}\\
    &\phantom{=}
    - 2 \limsup \EE[\norm{\partial_1 w(y_n^-, \tau_n)\tilde{\eta}_{n+1}^{-}}^2 | \mcF_n] \1_{A_p}\\
    &> \frac{c^2}{2}\, ,
\end{align*}
where we used our choice of $\sigma, t_1$ and Eq.~\eqref{eq:noise_repulsiv_loc}.

Noticing that $[y_n \to 0 ] \subset [ \Delta(y_n, \tau_n) \to 0 ]$, we can now apply Prop.~\ref{prop:duflo} to the sequence~$(u^+_n)$ (see Eq.~\eqref{eq:un+}) with $A = A_p$ to obtain
\[
\PP \left( A_p \cap [u^+_n \to 0] \right)
= \PP \left( A_p \cap [u^+_n \to 0] \cap [ \Delta(y_n, \tau_n) \to 0 ] \right)
= 0\,.
\]
We now show that $[y_n \to 0 ] \subset [u^+_n \to 0]$\,, which amounts to prove that $w(y^-_n,\tau_n) \to 0$ given~$y_n \to 0$. To that end, upon noting that $w(0,\cdot) \equiv 0$ and that $\partial_1 w(\cdot,\cdot)$ is bounded, it suffices to apply the mean value inequality, writing :
\[
\|w(y^-_n, \tau_n)\|
= \| w(y^-_n, \tau_n) - w(0,\tau_n)\|
\leq \sup_{(y^-,t)} \| \partial_1 w(y^-, t)\| \ \|y^-_n\|
\leq K \|y^-_n\|\,.
\]
We have shown so far that $\PP (A_p) = 0$.
Since $y_n = Q^{-1} z_n$ and $[y_n\to 0] \subset \bigcup_{p\in\NN} E_p$, we finally obtain that
\[
\PP[z_n \to 0] = \PP[y_n\to 0] =
 \PP \left( \bigcup_{p\in\NN} ( [y_n\to 0] \cap E_p ) \right) = \PP \left( \bigcup_{p \in \NN} A_p \right)= 0 .
\]
Th.~\ref{th:av-traps} is proven.

\subsection{Proofs for Section~\ref{subsec-apt-algo-gal}}
\label{prf:avt-algo-gal}

\subsubsection{Proof of Lem.~\ref{lem:lm-lin-g}}
\label{prf:lm-lin-g}
The matrix $D$ coincides with $\nabla g_\infty(z_\star)$, where the function
$g_\infty$ is defined in~\eqref{odeinfty}. As such, its expression is
immediate.
  Recalling that $p_\infty S(x_\star) - q_\infty v_\star = 0$, we get
  \begin{align*}
  g(z,t) - D(z-z_\star) &=
    \begin{bmatrix} \spp(t) S(x) - \sq(t) v
                 - p_\infty \nabla S(x_\star) (x - x_\star)
    + q_\infty( v - v_\star) \\
   \sh(t) \nabla F(x) - \sr(t) m - h_\infty \nabla^2F(x_\star) (x-x_\star) +
    r_\infty m \\
   -m \left( (v+\varepsilon)^{-\frac 12} - (v_\star+\varepsilon)^{-\frac 12} \right)
    \end{bmatrix}  \\
  &=
   \begin{bmatrix}
    - \sq(t) + q_\infty & 0 & (\spp(t) - p_\infty) \nabla S(x_\star) \\
      0    & r_\infty - \sr(t) &  (\sh(t) - h_\infty) \nabla^2 F(x_\star) \\
    \frac{m}{2(v_\star+\varepsilon)^{\frac 32}}  & 0 & 0
   \end{bmatrix}
   \begin{bmatrix} v - v_\star \\ m \\ x - x_\star \end{bmatrix} \\
   &\phantom{=}
    + \begin{bmatrix}
    \spp(t) ( S(x) - S(x_\star) - \nabla S(x_\star) (x - x_\star) ) \\
    \sh(t) ( \nabla F(x) - \nabla^2 F(x_\star) (x - x_\star) ) \\
   -m \odot \left( \frac{1}{\sqrt{v+\varepsilon}}
        - \frac{1}{\sqrt{v_\star+\varepsilon}}
      + \frac{v-v_\star}{2(v_\star+\varepsilon)^{\frac 32}} \right)
   \end{bmatrix} +
   \begin{bmatrix} \spp(t)S(x_\star) - \sq(t) v_{\star} \\
   0 \\
   0
   \end{bmatrix}\\
   &\eqdef e(z,t) +  c(t).
  \end{align*}
  Under the assumptions made, it is easy to see that the function $e(z,t)$ has
  the properties required in the statement of Th.~\ref{th:av-traps}.

\subsubsection{Proof of Th.~\ref{th:avt-application}}

Consider the matrix $D$ defined in the statement of Lem.~\ref{lem:lm-lin-g}.  A
spectral analysis of this matrix as regards its eigenvalues with positive real
parts is done in the following lemma.
\begin{lemma}
\label{lm-D}
Let $D$ be the matrix provided in the statement of Lem.~\ref{lem:lm-lin-g}.
Each eigenvalue $\zeta$ of the matrix $D$ such that $\Re\zeta > 0$ is real,
and its algebraic and geometric multiplicities
are equal. Moreover, there is a one-to-one correspondence $\varphi$
between these eigenvalues and the negative eigenvalues of $V^{\frac 12} \nabla^2
F(x_\star) V^{\frac 12}$.
Let $d^+$ be the dimension of the eigenspace of $V^{\frac 12} \nabla^2 F(x_\star)
V^{\frac 12}$ that is associated with its negative eigenvalues, let
\[
W = \begin{bmatrix} & w_1 & \\ & \vdots \\ & w_{d^+} & \end{bmatrix}
 \in \RR^{d^+ \times d}
\]
be a matrix which rows are independent eigenvectors of
$V^{\frac 12} \nabla^2 F(x_\star) V^{\frac 12}$ that generate this eigenspace, and
denote as $\beta_k < 0$ the eigenvalue associated with~$w_k$. Then, the rows
of the rank $d^+$-matrix
\[
A^+ = \begin{bmatrix}
 0_{d^+\times d}, & W V^{\frac 12}, &
  - \diag(r_\infty + \varphi^{-1}(\beta_k)) W V^{-\frac 12}
 \end{bmatrix}  \in \RR^{d^+ \times 3d}
\]
generate the left eigenspace of $D$, the row $k$ being an eigenvector for
the eigenvalue $\varphi^{-1}(\beta_k)$.
\end{lemma}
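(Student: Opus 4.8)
\textbf{Proof plan (Lem.~\ref{lm-D}).}
The whole statement reduces to the spectral theory of the symmetric matrix $B \eqdef V^{\frac12}\nabla^2 F(x_\star)V^{\frac12}$, whose inertia is that of $\nabla^2 F(x_\star)$, so the first step is to bring $B$ into the picture via the characteristic polynomial of the matrix $D$ of Lem.~\ref{lem:lm-lin-g}. Since the $v$-block column of $D-\zeta I$ contains only the diagonal block $-(q_\infty+\zeta)I_d$, the matrix is block upper triangular for the splitting ``$v$'' versus ``$(m,x)$'', whence
\[
\det(D-\zeta I) = (-1)^d (q_\infty+\zeta)^d \det\!\begin{bmatrix} -(r_\infty+\zeta)I_d & h_\infty\nabla^2 F(x_\star) \\ -V & -\zeta I_d \end{bmatrix}.
\]
For $\zeta\neq 0$ the block $-\zeta I_d$ is invertible, so the Schur complement identity turns the $2d\times 2d$ determinant into $\det\!\big(\zeta(\zeta+r_\infty)I_d + h_\infty\nabla^2 F(x_\star)V\big)$ after simplification. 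Now $\nabla^2 F(x_\star)V = V^{-\frac12} B V^{\frac12}$ is similar to $B$, hence has the (real) eigenvalues $\pi_1,\dots,\pi_d$ of $B$, and one gets the factorization
\[
\det(D-\zeta I) = (-1)^d (q_\infty+\zeta)^d \prod_{i=1}^d \big(\zeta^2 + r_\infty\zeta + h_\infty\pi_i\big),
\]
valid for all $\zeta$ by polynomiality.

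From this factorization I would read off the spectrum with positive real part, using $q_\infty,r_\infty,h_\infty>0$. The root $-q_\infty$, and, for every $\pi_i\geq 0$, both roots of $\zeta^2+r_\infty\zeta+h_\infty\pi_i$, have nonpositive real part; for $\pi_i<0$ the discriminant $r_\infty^2-4h_\infty\pi_i$ exceeds $r_\infty^2$, so that factor splits into a negative root and one \emph{real} positive root $\zeta_i^+\eqdef\tfrac12\big(-r_\infty+\sqrt{r_\infty^2-4h_\infty\pi_i}\big)$. Since $t\mapsto t(t+r_\infty)$ is strictly increasing on $t>0$, the map $\varphi:\zeta\mapsto -\zeta(\zeta+r_\infty)/h_\infty$ is injective on the positive axis, so distinct negative $\pi_i$ give distinct $\zeta_i^+$, and $\zeta_i^+$ is a root of no other factor of $\det(D-\zeta I)$ (not $q_\infty+\zeta$, since $\zeta_i^+>0$, and not $\zeta^2+r_\infty\zeta+h_\infty\pi_j$ with $\pi_j\neq\pi_i$, since $\varphi(\zeta_i^+)=\pi_i$). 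Hence the eigenvalues of $D$ with positive real part are real, equal to $\varphi^{-1}(\beta)=\tfrac12(-r_\infty+\sqrt{r_\infty^2-4h_\infty\beta})$ as $\beta$ runs over the negative eigenvalues of $B$, with algebraic multiplicity equal to the multiplicity of $\varphi(\zeta)$ in $B$; this is the bijection $\varphi$ and the reality claim. For the geometric multiplicity I would solve $(D-\zeta I)(a,b,c)^\T=0$ directly: the $v$- and $m$-rows express $a$ and $b$ as fixed linear images of $c$, and substituting $b$ into the $x$-row yields $V\nabla^2 F(x_\star)c = \varphi(\zeta)\,c$; as $V\nabla^2 F(x_\star)$ is similar to the diagonalizable $B$, the $c$-component ranges over a space of dimension equal to the multiplicity of $\varphi(\zeta)$ in $B$, so geometric and algebraic multiplicities coincide.

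Finally, for the left eigenspace I would solve $\ell D=\zeta\ell$ with $\ell=(\ell_1,\ell_2,\ell_3)$: the $v$-component forces $\ell_1=0$, the $m$-component gives $\ell_2=-(r_\infty+\zeta)^{-1}\ell_3 V$, and, using $V\nabla^2 F(x_\star)=V^{\frac12}BV^{-\frac12}$, the $x$-component reduces to $(\ell_3 V^{\frac12})B=\varphi(\zeta)(\ell_3 V^{\frac12})$. Taking $\zeta=\varphi^{-1}(\beta_k)$ and the $k$-th row of $A^+$, for which $\ell_2=w_k V^{\frac12}$ and $\ell_3 V^{\frac12}=-(r_\infty+\varphi^{-1}(\beta_k))\,w_k$, all three relations hold because $w_k B=\beta_k w_k$ ($B$ being symmetric, $w_k$ is at once a right and a left eigenvector). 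Thus each row of $A^+$ is a left eigenvector of $D$ for $\varphi^{-1}(\beta_k)$; independence of the $w_k$ and invertibility of $V^{\frac12}$ give $\mathrm{rank}\,A^+=d^+$, and since $d^+$ equals the total (algebraic $=$ geometric, hence no Jordan blocks) multiplicity of the positive-real-part eigenvalues, these $d^+$ independent left eigenvectors span the corresponding left eigenspace. The computations are elementary; the step I expect to be the most delicate is the multiplicity bookkeeping of the second paragraph --- excluding accidental coincidences among the roots $\zeta_i^+$, $-q_\infty$, and the roots attached to nonnegative $\pi_j$ --- since it is what transfers multiplicities from $B$ to $D$ exactly; keeping the Schur-complement algebra of the first paragraph clean is the other place where care is needed.
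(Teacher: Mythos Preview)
Your proof is correct and follows essentially the same approach as the paper: block-triangular reduction to the $2d\times 2d$ sub-block, factorization of the characteristic polynomial through the eigenvalues of $V^{\frac12}\nabla^2 F(x_\star)V^{\frac12}$, identification of the positive real root of each quadratic $\zeta^2+r_\infty\zeta+h_\infty\pi_i$ when $\pi_i<0$, and then solving $\ell D=\zeta\ell$ directly with the change of variable $\ell_3 V^{\frac12}$ to land on eigenvectors of $B$. The only minor differences are that you make the exclusion of accidental root coincidences explicit (the paper leaves this implicit) and you check the geometric multiplicity via a separate right-eigenvector count, whereas the paper reads it off from the left-eigenvector construction itself; both are fine.
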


\begin{proof}
It is obvious that the block lower-triangular matrix $D$ has $d$ eigenvalues
equal to $- q_\infty$ and $2d$ eigenvalues which are those of the sub-matrix
 \[
 \widetilde D =
  \begin{bmatrix} - r_\infty I_d  & h_\infty \nabla^2 F(x_\star) \\
   - V & 0 \end{bmatrix} .
 \]
Given $\lambda \in \CC$, we obtain by standard manipulations involving
determinants that
\[
 \det(\widetilde D - \lambda) =
   \det(\lambda(r_\infty+\lambda) + h_\infty V \nabla^2 F(x_\star)) =
   \det(\lambda(r_\infty+\lambda)
             + h_\infty V^{\frac 12} \nabla^2 F(x_\star) V^{\frac 12}) .
\]
Denoting as $\{\beta_k\}_{k=1}^d$ the eigenvalues of $h_\infty V^{\frac 12}
\nabla^2 F(x_\star) V^{\frac 12}$ counting the multiplicities, we obtain from the
last equation that the eigenvalues of $\widetilde D$ are the solutions of the
second order equations
\[
  \lambda^2 + r_\infty\lambda + \beta_k = 0 , \quad k = 1,\ldots, d.
\]
The product of the roots of such an equation is $\beta_k$, and their sum is
$-r_\infty \leq 0$. Thus, denoting as $\zeta_{k,1}$ and $\zeta_{k,2}$ these roots,
it is easy to see that if $\beta_k \geq 0$, then $\Re\zeta_{k,1},
\Re\zeta_{k,2} \leq 0$, while if $\beta_k < 0$, then both $\zeta_{k,i}$ are
real, and only one of them is positive.  Thus, we have so far shown that the
eigenvalues of $D$ which real parts are positive are themselves real, and
there is a one-to-one map $\varphi$ from the set of positive eigenvalues of
$D$ to the set of negative eigenvalues of
$V^{\frac 12} \nabla^2 F(x_\star) V^{\frac 12}$.
Moreover, the algebraic multiplicity of the eigenvalue $\zeta > 0$ of $D$
is equal to the multiplicity of $\varphi(\zeta)$.

Let us now turn to the left (row) eigenvectors of $D$ that correspond to these
eigenvalues. To that end, we shall solve the equation
\begin{equation}
\label{leftvep}
u D = \zeta u \quad \text{with} \ u = [ 0, u_{1}, u_{2} ],
 \quad u_{1,2} \in \RR^{1\times d},
\end{equation}
for a given eigenvalue $\zeta > 0$ of $D$. Developing this equation, we get
\[
 - r_\infty u_{1} - u_{2} V = \zeta u_{1}, \quad
  h_\infty u_{1} \nabla^2 F(x_\star) = \zeta u_{2} .
\]
If we now write $\tilde u_{1} = u_{1} V^{-\frac 12}$ and
$\tilde u_{2} = u_{2} V^{\frac 12}$, this system becomes
\[
- r_\infty \tilde u_{1} - \tilde u_{2} = \zeta \tilde u_{1}, \quad
 h_\infty \tilde u_{1} V^{\frac 12} \nabla^2 F(x_\star) V^{\frac 12}
  = \zeta \tilde u_{2} ,
\]
or, equivalently,
\[
\tilde u_{2} = - (r_\infty + \zeta) \tilde u_{1}, \quad
 \tilde u_{1} \left( \zeta^2 + r_\infty\zeta +
  h_\infty V^{\frac 12} \nabla^2 F(x_\star) V^{\frac 12}  \right) = 0 ,
\]
which shows that $\tilde u_{1}$ is a left eigenvector of
$V^{\frac 12} \nabla^2 F(x_\star) V^{\frac 12}$ associated with the eigenvalue
$\varphi(\zeta)$. What's more, assume that $r$ is the multiplicity of
$\varphi(\zeta)$, and, without generality loss, that the submatrix
$W_{r,\cdot}$ made of the first $r$ rows of $W$ generates the left eigenspace
of $\varphi(\zeta)$. Then, the matrix
\[
\begin{bmatrix}
 0_{r\times d} & W_{r\cdot} V^{\frac 12} & - (r_\infty + \zeta) W_{r\cdot} V^{-\frac 12}
 \end{bmatrix}
\]
is a $r$-rank matrix which rows are independent left eigenvectors that
generate the left eigenspace of $D$ for the eigenvalue $\zeta$. In particular,
the algebraic and geometric multiplicities of this eigenvalue are equal.
The same argument can be applied to the other positive eigenvalues of~$D$.
\end{proof}

We now have all the elements to prove Th.~\ref{th:avt-application}.
Recall Eq.~\eqref{eq:algo-gal-avt}:
\[
z_{n+1} = z_n + \gamma_{n+1} b(z_n, \tau_n) +  \gamma_{n+1} \eta_{n+1}
 + \gamma_{n+1} \rho_{n+1} ,
\]
where $b(z,t) = g(z,t) - c(t) = D (z-z_\star) + e(z,t)$ and
$\rho_n = c(\tau_{n-1}) + \tilde{\rho}_n$.  With these same notations,
we check that
Assumptions~\ref{e=0}--\ref{traps-noise-mom} in the statement of Th.~\ref{th:av-traps} are satisfied.
The function~$e(z,t)$ satisfies Assumptions~\ref{e=0}--\ref{dt_eps} by  Lem.~\ref{lem:lm-lin-g}. We now verify that the sequence $(\rho_n)$ fulfills Assumption~\ref{sum_rho_carre}. First, observe that $\sum_n \|c(\tau_n)\|^2<\infty$ under Assumption~\ref{hyp:avt}-\ref{rho_sum_carre_appli}. Then, we control the second term~$(\tilde \rho_n)$. After straightforward derivations, one can show the existence of a positive constant $C$ (depending only on $\varepsilon$ and a neighborhood $\mathcal W$ of $z_\star$) such that
\begin{equation}
  \label{eq:rho_bound}
\|\tilde \rho_{n+1}\|^2  \1_{z_n \in \mathcal W}
 \leq C( \|m_n - m_{n+1}\|^2 +\|v_{n+1}-v_n\|^2)\1_{z_n \in \mathcal W}\,.
\end{equation}
Using the boundedness of the sequences $(h_n)$ and $(r_n)$ together with the update rule of $m_n$ and Assumption~\ref{hyp:avt}-\ref{moment_appli_avt}, there exists a positive constant $C'$ independent of $n$ (which may change from an inequality to another) such that
\begin{equation}
  \label{eq:rho_subbound}
 \EE\left[\|m_n - m_{n+1}\|^2 \1_{z_n \in \mathcal W} \right]
 \leq \gamma_{n+1}^2 C' \EE \left[(1 + \EE_\xi\left[ \|\nabla f(x_n,\xi)\|^2 \right]) \1_{z_n \in \mathcal W} \right]
 \leq C' \gamma_{n+1}^2\,.
\end{equation}
A similar result holds for $\EE\left[\|v_n - v_{n+1}\|^2 \1_{z_n \in \mathcal W}\right]$ following the same arguments. In view of Eqs.~\eqref{eq:rho_bound}-\eqref{eq:rho_subbound} and the assumption $\sum_n \gamma_{n+1}^2 < +\infty$, it holds that $\EE\left[\sum_n \|\tilde \rho_{n+1}\|^2 \1_{z_n \in \mathcal W}\right] < +\infty$. Therefore, $\sum_n \|\tilde \rho_{n+1}\|^2 \1_{z_n \in \mathcal W} < +\infty$ a.s., which completes our verification of condition~\ref{sum_rho_carre} of Th.~\ref{th:av-traps}.
Assumption~\ref{traps-noise-mom} follows from condition~\ref{hyp:avt}-\ref{moment_appli_avt}.
Finally, let us make Assumption~\ref{traps-noise} of 
Th.~\ref{th:av-traps} more explicit.
Partitioning the matrix $Q^{-1}$ as
$Q^{-1} = \begin{bmatrix} B^- \\ B^+ \end{bmatrix}$ where $B^\pm$ has $d^\pm$
rows, Lem.~\ref{lm-D} shows that the row spaces of $B^+$ and $A^+$ are the
same, which implies that Assumption~\ref{traps-noise} can be rewritten
equivalently as $\EE[ \norm{A^+ \eta_{n+1}}^2 \, | \, \mcF_n ]
\1_{z_n \in \mathcal W} \geq c^2 \1_{z_n \in \mathcal W}$.
By inspecting the form of $\eta_{n}$ provided by Eq.~\eqref{eq:noise_eta} (written as a column vector),
one can readily check that Assumption~\ref{hyp:avt}-\ref{cond-noise} implies Assumption~\ref{traps-noise} of
Th.~\ref{th:av-traps} for a small enough neighborhood~$\cW$, using the
continuity of the covariance matrix
$V^{\frac 12} \EE_\xi
  (\nabla f(x, \xi) - \nabla F(x) )
  (\nabla f(x, \xi) - \nabla F(x) )^\T V^{\frac 12}$
when $x$ is near $x_\star$.

\subsection{Proof of Th.~\ref{th:avt-application-nesterov}}
As mentioned in Section~\ref{subsubsec:aplit-avt-nest}, the proof of Th.~\ref{th:avt-application-nesterov} is almost identical to the one of Th.~\ref{th:avt-application}. We point out the main differences here.
In Lem.~\ref{lem:lm-lin-g}, replace~$D$ by $\tilde D = \begin{bmatrix}
 0 & h_\infty \nabla^2 F(x_\star) \\
 - I_d & 0
\end{bmatrix}$ and set $c(t) = 0$. Then, in Lem.~\ref{lm-D}, replace the matrix $V^{1/2}\nabla^2F(x_{\star})V^{1/2}$ by the Hessian $\nabla^2 F(x_{\star})$.

\appendix


\section*{Acknowledgements}
We would like to thank the anonymous reviewers for their outstanding job of refereeing, especially for their comments on the avoidance of traps results and the stability of the iterates of the stochastic algorithm which helped us to improve and clarify our manuscript.\\
A.B. was supported by the ``Futur \& Ruptures'' research program which is jointly funded by the IMT, the Mines-T\'el\'ecom Foundation and the Carnot TSN Institute. S.S. was supported by the ``R\'egion Ile-de-France''.

\bibliographystyle{plain}
\bibliography{math}

\end{document}